\documentclass[a4paper]{amsart}

\usepackage{amsthm,amsfonts,amsmath, amssymb}
\usepackage[capitalise]{cleveref}
\usepackage{tikz}
\usepackage{cmbright}

\usepackage{geometry}                
\geometry{}                   
\usepackage{anysize}
\marginsize{2cm}{2cm}{2cm}{2cm}

\renewcommand{\baselinestretch}{1.25}
\setlength{\footnotesep}{\baselinestretch\footnotesep}
\setlength{\parindent}{0cm}
\setlength{\parskip}{2ex}

\usepackage{mathtools}

\DeclareMathOperator{\ver}{vert}

\def\h{\frac{1}{2}}

\def\TA{{T\hskip-3.11pt A}}
\def\r{\mathbb{R}}
\usepackage[sort&compress,square,comma,numbers]{natbib}



\usepackage{graphicx,float,enumerate}
\usepackage[ruled,boxed,commentsnumbered,norelsize]{algorithm2e}
\usepackage{verbatim}
\usepackage[hyphens]{url}
\usepackage{epstopdf}
\usepackage[notref,notcite,final]{showkeys}

\renewcommand*\showkeyslabelformat[1]{%
\fbox{\parbox[t]{\marginparwidth}{\raggedright\normalfont\tiny\url{#1}}}
}

\usepackage[capitalise]{cleveref}
\usepackage{subfigure}

\newtheorem{theorem}{Theorem}[section]
\newtheorem{corollary}[theorem]{Corollary}
\newtheorem{lemma}[theorem]{Lemma}
\newtheorem{proposition}[theorem]{Proposition}

\theoremstyle{definition}

\theoremstyle{remark}
\newtheorem{rmk}[theorem]{Remark}

\crefname{rmk}{Remark}{Remarks}
\date{\today}
\title{{The excess degree of a polytope}}

\author{Guillermo Pineda-Villavicencio}
\address{Centre for Informatics and Applied Optimisation, Federation University Australia}
\email{\texttt{work@guillermo.com.au}}

\author{Julien Ugon}
\address{School of Information Technology, Deakin University}
\email{\texttt{julien.ugon@deakin.edu.au}}
\thanks{Research by Ugon was supported by ARC discovery project DP180100602.}

\author{David Yost}
\address{Centre for Informatics and Applied Optimisation, Federation University Australia}
\email{\texttt{d.yost@federation.edu.au}}

\keywords{polytope; Minkowski decomposability; f-vector; polytope graph; excess degree}
\subjclass[2010]{Primary 52B05; Secondary 52B11}

\begin{document}

\maketitle

\begin{abstract}
We define the excess degree $\xi(P)$ of a $d$-polytope $P$ as $2f_1-df_0$, where $f_0$ and $f_1$ denote the number of vertices and edges, respectively. This parameter measures how much $P$ deviates from being simple.

It turns out that the excess degree of a $d$-polytope does not take every natural number: the smallest possible values are $0$ and $d-2$, and the value $d-1$ only occurs when $d=3$ or 5. On the other hand, for fixed $d$, the number of values not taken by the excess degree is finite if $d$ is odd, and the number of even values not taken by the excess degree is finite if $d$ is even.

The excess degree  is then applied in three different settings. It is used to show that polytopes with small excess (i.e. $\xi(P)<d$) have a very particular structure: provided $d\ne5$, either there is a unique nonsimple vertex, or every nonsimple vertex has  degree $d+1$. This implies that such polytopes behave in a similar manner to simple polytopes in terms of Minkowski decomposability: they are either decomposable or pyramidal, and their duals are always indecomposable.
Secondly, we characterise completely the decomposable $d$-polytopes with $2d+1$ vertices (up to combinatorial equivalence). And thirdly all pairs $(f_0,f_1)$, for which there exists a 5-polytope with $f_0$ vertices and $f_1$ edges, are determined.

\end{abstract}

\section{Introduction}
This paper revolves around the excess degree of a $d$-dimensional polytope $P$, or simply $d$-polytope, and some of its applications. We define the {\it excess degree} $\xi(P)$, or simply excess,  of a $d$-polytope $P$ as the sum of the excess degrees of its vertices, i.e.
\[\xi(P)=2f_1-df_0=\sum_{u\in \ver P}(\deg u -d).\]
Here as usual $\deg u$ denotes the {\it degree} of a vertex $u$, i.e~the number of edges of $P$ incident with the vertex;  $\ver P$ denotes the set of vertices of $P$; and $f_0$ and $f_1$ denote the number of vertices and edges of the polytope. This concept is implicit in some results in \cite[\S6]{Smi87}, but has not been studied consistently before.

Our first substantial result, in \S3, is the excess theorem: the smallest values of the excess degree of $d$-polytopes are 0 and $d-2$; clearly a polytope is simple if, and only if, its excess degree is 0. Note that for fixed $d$ and $f_0$, the possible values of the excess are either all even or all odd. We further show that if $d$ is even, the excess degree takes every even natural number from $d\sqrt{d}$ onwards; while, if $d$ is odd, the excess degree takes every natural number from $d\sqrt{2d}$ onwards. So, for a fixed $d$, only a finite number of gaps in the values of the excess of a $d$-polytope  are possible.

In \S4, we study $d$-polytopes with excess strictly less than $d$, and establish some similarities with simple polytopes. In particular, they are either pyramids or decomposable.
A polytope $P$ is called {\it (Minkowski) decomposable} if it can be written as the Minkowski sum of two polytopes, neither of which are homothetic to $P$; otherwise it is {\it indecomposable}. See \cite[Chap. 15]{Gru03} for a more detailed account and historical references. As usual, the {\it(Minkowski) sum} of two polytopes $Q+R$ is defined to be $\{x+y: x\in Q, y\in R\}$, and a polytope $P$ is said to be {\it homothetic} to a polytope $Q$ if  $Q=\lambda P+t$ for $\lambda> 0$ and $t\in \mathbb{R}^d$.
Polytopes with  excess $d-2$ exist in all dimensions, but their structure is quite restricted: either there is a unique nonsimple vertex, or there is a $(d-3)$-face containing only vertices with excess degree one. Polytopes with excess $d-1$  are in one sense even more restricted: they can exist only if $d=3$ or 5. However a 5-polytope with excess 4 may also contain vertices with excess degree 2. On the other hand, $d$-polytopes with excess degree $d$ or $d+2$ are exceedingly numerous.

In \S5, we characterise all the decomposable $d$-polytopes with $2d+1$ or fewer vertices; this incidentally proves that a conditionally decomposable $d$-polytope must have at least $2d+2$ vertices.

The final application, in \S7, is the completion of the $(f_0,f_1)$ table for $d\le 5$; that is, we give all the possible values of $(f_0,f_1)$ for which there exists a $d$-polytope with $d= 5$, $f_0$ vertices and $f_1$ edges. The solution of this problem for $d\le4$ was already well known \cite[Chap. 10]{Gru03}. The same result has recently  been independently obtained by Kusunoki and Murai \cite{KusMur17}. Our proof requires some  results of independent interest; in particular, a characterisation of the 4-polytopes with 10 vertices and minimum number of edges (namely, 21); this is completed in \S6. We have more comprehensive results characterising polytopes with a given number of vertices and  minimum possible number of edges, details of which will appear elsewhere \cite{PinUgoYos16}.

{Most of our results tacitly  assume that the dimension $d$ is at least 3. When $d=2$, all polytopes are both simple and simplicial, and the reader can easily see which theorems remain valid in this case.}

\section{Background: some special polytopes and previous results}
	
\subsection{Some basic results on polytopes} In this subsection we group a number of basic results on polytopes which we will use throughout the paper. Recall that a {\it facet} of a $d$-polytope is a face of dimension $d-1$; a {\it ridge} is a face of dimension $d-2$; and a {\it subridge} is a face of dimension $d-3$. A fundamental property of polytopes is that every ridge is contained in precisely two facets.

Recall that a vertex $u$ in a $d$-polytope $P$ is called {\it simple} if its degree  in $P$ is precisely $d$; equivalently if it is contained in exactly $d$ facets. Otherwise it is {\it nonsimple}. Note that a nonsimple vertex in a polytope $P$ may be simple in a proper face of  $P$; we often need to make this distinction. A polytope is {\it simple} if every vertex is simple; otherwise it is {\it nonsimple}.

Let $H$ be a hyperplane intersecting the interior of $P$ and containing no vertex of $P$, and let $H^+$ be one of the two closed half-spaces bounded by $H$. Set $P':=H^+\cap P$. If the vertices of $P$ not in $H^+$ are the vertices of a face $F$, then the polytope $P'$ is said to be obtained by {\it truncation} of the face $F$ by $H$. We often say that $P$ has been {\it sliced} or {\it cut} at $F$.
We will then call $H\cap P$ the {\bf underfacet} corresponding to $F$. Klee \cite{Kle78} called this the face figure, but that term is sometimes used for a different concept \cite[p. 71]{Zie95}. When $F$ has dimension 0, the underfacet is simply the vertex figure.

\begin{lemma}\label{lem:face-all-neighbours-simple} Let $P$ be a $d$-polytope and let $F$ be a face of $P$. Suppose the neighbours outside $F$ of every vertex of $F$ are all simple in $P$. Let $H$ be a hyperplane as above, and let $P':=H^+\cap P$ be obtained by truncation of $F$ by $H$. Then every vertex in the facet $H\cap P$ is simple in $P'$, and thus the facet $H\cap P$ is a simple $(d-1)$-polytope.\end{lemma}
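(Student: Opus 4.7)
Vertices of the underfacet $H\cap P$ correspond bijectively to the edges of $P$ crossed by $H$, that is, to edges $e=vw$ with $v\in F$ and $w\notin F$. By hypothesis every such $w$ is simple in $P$. The plan is to compute $\deg_{P'}(u)$ directly for an arbitrary vertex $u\in H\cap P$ and show it equals $d$; simplicity in $P'$ will then force simplicity in the facet $H\cap P$.

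The edges of $P'$ incident to $u$ split into two types. First, the portion of $e$ from $u$ to $w$ survives as an edge of $P'$, contributing exactly one. Second, every remaining edge of $P'$ at $u$ lies in the underfacet $H\cap P$, and these are precisely the segments $G\cap H$, where $G$ ranges over the 2-faces of $P$ that contain $e$. Indeed, any 2-face of $P$ meeting $H$ at $u$ must contain $e$, since the minimal face of $P$ through $u$ is $e$ itself; conversely every 2-face of $P$ containing $e$ has vertices on both sides of $H$ and so crosses $H$ in a segment with endpoint $u$; and distinct such 2-faces yield distinct edges of $H\cap P$, because two 2-faces of $P$ intersect in a face of dimension at most $1$.

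To count these 2-faces, pass to the vertex figure $P/w$, which is a $(d-1)$-simplex because $w$ is simple. The 2-faces of $P$ containing $e$ correspond to the edges at the vertex of $P/w$ representing $e$, of which there are exactly $d-1$. Hence $\deg_{P'}(u)=1+(d-1)=d$, and $u$ is simple in $P'$. Consequently the vertex figure $P'/u$ is a $(d-1)$-simplex, whose facet corresponding to $H\cap P$ is a $(d-2)$-simplex; so $u$ is simple in $H\cap P$ as well. Applying this to every vertex yields that $H\cap P$ is a simple $(d-1)$-polytope. The one point requiring care is the bijection between edges of $H\cap P$ at $u$ and 2-faces of $P$ through $e$; the simplicity hypothesis on $w$ is what turns the count into the clean value $d-1$.
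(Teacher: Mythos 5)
Your proof is correct, and it runs parallel to the paper's argument but one level down in the face lattice: the paper counts \emph{facets} of $P'$ through the new vertex $u_e$ (namely the truncations of the facets of $P$ containing $e$, plus the underfacet $H\cap P$, giving $(d-1)+1=d$), whereas you count \emph{edges} at $u_e$ (the surviving piece of $e$, plus the edges of the underfacet, which you identify with the 2-faces of $P$ through $e$, again $(d-1)+1=d$). Both hinge on the same mechanism: simplicity of the outer endpoint $w$ pins the relevant count at exactly $d-1$. The paper's facet count is slightly slicker, since the facets of a truncation through $u_e$ are immediate to name and no distinctness check is needed; your degree count pays for that with the extra bookkeeping (the bijection between edges of $H\cap P$ at $u_e$ and 2-faces of $P$ containing $e$, and the observation that the only edge of $P'$ at $u_e$ leaving $H$ is the remnant of $e$, both of which you correctly reduce to $e$ being the minimal face of $P$ through $u_e$). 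In return, your route makes explicit \emph{why} the hypothesis is needed -- the vertex figure $P/w$ is a $(d-1)$-simplex, so exactly $d-1$ two-faces contain $e$ -- a point the paper leaves implicit in its bare assertion that $e$ lies in exactly $d-1$ facets of $P$. Your final step, passing from simplicity of $u_e$ in $P'$ to simplicity in the facet $H\cap P$ via the vertex figure $P'/u_e$ being a simplex, is also fine and is exactly the content of the paper's closing ``thus.''
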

\begin{proof}
Every vertex $u_e$ in the facet $H\cap P$ is the intersection of $H$ and an edge $e$ of $P$ outside $F$ but incident to a vertex of $F$.  Consequently, the facets of $P'$ containing the vertex $u_e$ in $H\cap P$ are precisely the facets of $P$ containing the edge $e$ plus the facet $H\cap P$. Since the edge $e$ is contained in exactly $d-1$ facets of $P$, the vertex $u_e$ in $P'$ is contained in exactly $d$ facets of $P'$, thus is simple in $P'$.
\end{proof}

An interesting corollary of  \cref{lem:face-all-neighbours-simple} arises when $F$ is a vertex and reads as follows.
\begin{lemma}\label{lem:vertex-all-neighbours-simple} Let $P$ be a $d$-polytope and let $v$ be a vertex whose neighbours are all simple in $P$. Then the vertex figure of $P$ at $v$ is a simple $(d-1)$-polytope.\end{lemma}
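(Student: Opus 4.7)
The plan is to apply \cref{lem:face-all-neighbours-simple} directly, taking the face $F$ to be the zero-dimensional face $\{v\}$. The hypothesis of that lemma requires that, for every vertex of $F$, all its neighbours \emph{outside} $F$ are simple in $P$; with $F=\{v\}$ this reduces to: every neighbour of $v$ is simple in $P$, which is exactly what we assume here. So the hypothesis is satisfied without extra work.

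Next, I would invoke the remark made just before \cref{lem:face-all-neighbours-simple}, namely that when the sliced face $F$ has dimension $0$, the underfacet $H\cap P$ is precisely the vertex figure of $P$ at $v$. Choosing any hyperplane $H$ that separates $v$ from the remaining vertices of $P$ produces a truncation $P':=H^+\cap P$ in which $H\cap P$ is the vertex figure at $v$.

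Applying the conclusion of \cref{lem:face-all-neighbours-simple} to this setup yields that every vertex of $H\cap P$ is simple in $P'$, and hence $H\cap P$ is a simple $(d-1)$-polytope. Identifying $H\cap P$ with the vertex figure of $P$ at $v$ gives the claim.

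There is essentially no obstacle here: the statement is a straightforward specialisation of the preceding lemma, and the only thing requiring attention is remembering the convention from the paragraph introducing the term ``underfacet,'' which explicitly identifies the underfacet with the vertex figure in the $0$-dimensional case. For this reason I would keep the proof to essentially one or two sentences rather than reproducing the edge-counting argument of the ambient lemma.
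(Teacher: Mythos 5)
Your proposal is exactly how the paper handles it: the lemma is introduced as a corollary of \cref{lem:face-all-neighbours-simple} obtained by taking $F$ to be the vertex $v$, with the underfacet at a $0$-dimensional face identified with the vertex figure. Your specialisation argument is correct and matches the paper's intent, so nothing further is needed.
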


The next few results are simple but useful.

\begin{rmk}\label{rmk:two-vertices}
For any two faces $F,G$ of a polytope, with $F$ not contained in $G$, there is a facet containing $G$ but not $F$. In particular, for any two distinct vertices of a polytope, there is a facet containing one but not the other.
\end{rmk}

\begin{lemma}\label{lem:simpleNonsimpleVertex} Let $P$ be a $d$-polytope and let $v$ be a vertex simple in a facet $F$. Suppose that every facet containing $v$ intersects $F$ at a ridge. Then $v$ is a simple vertex in $P$.
\end{lemma}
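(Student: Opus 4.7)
The plan is to count the facets of $P$ containing $v$ by establishing a bijection with the facets of $F$ containing $v$. Since $v$ is simple in $F$, which is a $(d-1)$-polytope, there are exactly $d-1$ facets of $F$ containing $v$. So if we can show that the facets of $P$ containing $v$ other than $F$ are in one-to-one correspondence with the facets of $F$ containing $v$, we get exactly $d$ facets of $P$ through $v$, and $v$ is simple.

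First I would set up the map. For any facet $G \ne F$ of $P$ with $v \in G$, the hypothesis gives that $G \cap F$ is a ridge of $P$, and it clearly contains $v$ and lies inside $F$. Since a ridge has dimension $d-2$ and $F$ has dimension $d-1$, the intersection $G \cap F$ is a facet of $F$ containing $v$. This defines the map $G \mapsto G \cap F$.

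Next I would verify that this map is a bijection. For injectivity, note that if $G_1 \cap F = G_2 \cap F = R$, then $R$ is a ridge of $P$, hence contained in exactly two facets of $P$; one of them is $F$, so $G_1$ and $G_2$ must both equal the unique other facet through $R$. For surjectivity, take any facet $R$ of $F$ containing $v$. As a $(d-2)$-face of $F$, $R$ is a $(d-2)$-face of $P$, i.e., a ridge of $P$. It lies in exactly two facets of $P$, namely $F$ and some other facet $G$; this $G$ contains $v$, since $v \in R \subseteq G$. Finally, $G \cap F$ is a face of $F$ containing the facet $R$ of $F$; it cannot equal $F$ itself (otherwise $G = F$), so $G \cap F = R$.

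The combinatorics is routine once the map is written down; the only thing to be alert about is ensuring the facet $G$ produced in the surjectivity step is genuinely distinct from $F$ and lies in the domain of the map, which follows immediately from $R$ being a proper ridge and $\dim R = d-2 < d-1 = \dim F$. With the bijection established, the count $1 + (d-1) = d$ completes the proof.
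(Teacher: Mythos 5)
Your proof is correct and follows the same route as the paper: the paper also counts the facets of $P$ through $v$ by identifying the facets other than $F$ with the $d-1$ ridges of $F$ containing $v$, giving $d$ facets in total. You simply spell out the bijection (well-definedness, injectivity via the two-facets-per-ridge property, surjectivity) that the paper leaves implicit.
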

\begin{proof}  The other facets containing $v$ are given by the ridges of $F$ containing $v$, whose number is $d-1$ since $v$ is simple in $F$. Thus, in total there are $d$ facets containing $v$.
\end{proof}

\begin{lemma}\label{lem:simpleVertexOutside}
Let $P$ be a polytope, $F$ a facet of $P$ and $u$ a nonsimple vertex of $P$ which is contained in $F$. If $u$ is adjacent to a simple vertex $x$ of $P$ in $P\setminus F$, then $u$ must be adjacent to another vertex in $P\setminus F$, other than $x$.
\end{lemma}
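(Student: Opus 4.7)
The plan is to argue by contradiction, assuming that $x$ is the only neighbour of $u$ outside $F$, and exploiting the nonsimplicity of $u$ to produce a second facet at $u$ avoiding $x$.

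First I would use the simplicity of $x$ to control the edge $ux$. Since $x$ is simple in $P$, the edge $ux$ lies in exactly $d-1$ facets of $P$, and (as in the proof of \cref{lem:simpleNonsimpleVertex}) these $d-1$ facets are precisely the facets of $P$ that contain both $u$ and $x$; also, none of them equals $F$ because $x\notin F$. Therefore $u$ lies in these $d-1$ facets plus $F$, giving $d$ facets at $u$ not yet exhausted. Since $u$ is nonsimple, $u$ lies in at least $d+1$ facets of $P$, so there must exist another facet $F'\ne F$ at $u$ which does not contain $x$ (for if it contained $x$, it would contain the edge $ux$, and would already have been counted among the $d-1$ facets).

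Next I would exploit $F'$. Because $x\notin F'$, the edge $ux$ is not an edge of $F'$. Under the contradiction hypothesis, every edge at $u$ other than $ux$ lies in $F$; hence every edge at $u$ that lies in $F'$ must in fact lie in $F\cap F'$. Now $F\cap F'$ is a ridge of $P$ containing $u$, which is a facet of the $(d-1)$-polytope $F'$. The standard fact that any vertex of a polytope has at least one edge escaping any given facet of the polytope (applied to $u$ in $F'$ with facet $F\cap F'$) yields an edge $uy$ of $F'$ with $y\notin F\cap F'$. Since this edge lies in $F'$, we have $y\ne x$; and since it is not in $F\cap F'$ but is in $F'$, it is not in $F$, so $y$ is a neighbour of $u$ outside $F$ distinct from $x$, contradicting the hypothesis.

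I do not expect a serious obstacle: the whole argument is a careful bookkeeping of facets through $u$, with the nonsimplicity of $u$ providing exactly the slack needed to produce the extra facet $F'$. The one step that requires a little care is recognising that a facet of $P$ containing both $u$ and $x$ must contain the edge $ux$ (so that the $d-1$ facets through $ux$ account for all facets at $u$ that also contain $x$), and that inside $F'$ the vertex $u$ must have an edge escaping the facet $F\cap F'$; both are routine once the setup is in place.
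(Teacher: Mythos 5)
Your overall strategy is the same facet count that the paper's proof rests on: $u$ nonsimple lies in at least $d+1$ facets, while the edge $ux$ lies in exactly $d-1$ facets because $x$ is simple. Your first paragraph is a correct and careful elaboration of that count (including the observation that a facet containing both $u$ and $x$ must contain the edge $ux$), and it produces a facet $F'\ne F$ through $u$ that avoids $x$. The paper states just these two counts and concludes directly; your closing move inside $F'$ is a legitimate way to finish.

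However, the finishing step contains a genuine error as written: you assert that $F\cap F'$ is a ridge of $P$, hence a facet of the $(d-1)$-polytope $F'$. That is false in general -- two facets of a $d$-polytope may intersect in a face of any dimension up to $d-2$, which is exactly the phenomenon quantified in \cref{lem:Non-DisjointFacets}, and it occurs precisely at nonsimple vertices such as $u$, so it cannot be assumed away here. Fortunately your argument does not need the ridge property. What is true is that $F\cap F'$ is a face of $P$ contained in $F'$, hence a face of $F'$; it is a \emph{proper} face because no facet of $P$ contains another; and it contains $u$. A vertex always has an edge escaping any proper face containing it: the edges of $u$ lying in a proper face $G$ of $F'$ correspond to the vertices of the proper face of the vertex figure $F'/u$ determined by $G$, and a proper face cannot contain all vertices. (Alternatively, enlarge $F\cap F'$ to a facet of $F'$ and apply your ``standard fact'' to that facet; the escaping edge then also escapes $F\cap F'$.) With this substitution the rest of your second paragraph -- $y\ne x$ since $y\in F'$ and $x\notin F'$, and $y\notin F$ since otherwise $y\in F\cap F'$ -- goes through verbatim and the contradiction stands.
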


\begin{proof}
Since $u$ is nonsimple, it is contained in at least $d+1$ facets of $P$. The edge $ux$ is contained in exactly $d-1$ facets of $P$, since $x$ is simple. Hence there are at least two edges of $u$ outside $F$, as desired.
\end{proof}

\begin{lemma}\label{lem:Non-DisjointFacets}
Let $F$ and $G$ be any two distinct nondisjoint facets of a $d$-polytope $P$  and let $j:=\dim F\cap G$.
\begin{enumerate}[(i)]
\item Every vertex in $F\cap G$ has excess degree at least $(d-2-j)$.
\item The total excess degree of $P$ is at least $\max\{\xi(F),\xi(G),\xi(F\cap G)\}+(d-2-j)(j+1)$.
\item If $F\cap G$ is not a ridge, then $P$ is not simple.
\end{enumerate}
\end{lemma}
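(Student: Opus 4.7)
My plan is to derive (i) from a direct count of neighbours at each vertex of $F\cap G$, bootstrap this to the three bounds in (ii), and deduce (iii) as an immediate corollary.

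For (i), fix $v\in F\cap G$ and partition the neighbours of $v$ into three disjoint groups: those inside $F\cap G$, those in $F\setminus G$, and those in $G\setminus F$. Since $F\cap G$ is itself a $j$-polytope, $v$ has at least $j$ neighbours inside it. To handle the other two groups I would use the general fact that in any $k$-polytope every vertex of an $r$-face has at least $k-r$ neighbours outside that face; this follows by passing to the vertex figure at $v$ and invoking the (induction on codimension) observation that any $k'$-polytope has at least $k'-r'$ vertices outside any proper $r'$-face. Applied inside the $(d-1)$-polytope $F$ to its $j$-face $F\cap G$, this yields at least $d-1-j$ neighbours of $v$ in $F\setminus G$, and symmetrically at least $d-1-j$ in $G\setminus F$. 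Adding up: $\deg_P v\ge j+2(d-1-j)=2d-2-j$, so the excess degree of $v$ is at least $d-2-j$.

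For (ii), write $\xi_P(u):=\deg_P u-d$ and $\xi_F(u):=\deg_F u-(d-1)$. Any vertex $u\in F$ has at least one edge leaving $F$ (the cone of edges at $u$ spans $d$ dimensions while $F$ spans only $d-1$), so $\xi_P(u)\ge\xi_F(u)$. For $u\in F\cap G$, the sharper count in (i)---specifically the $d-1-j$ neighbours in $G\setminus F$---upgrades this to $\xi_P(u)\ge\xi_F(u)+(d-2-j)$. Summing over $u\in F$, using $|\ver(F\cap G)|\ge j+1$ (a $j$-polytope has at least $j+1$ vertices) and the non-negativity of $\xi_P$ at vertices outside $F$, gives $\xi(P)\ge \xi(F)+(d-2-j)(j+1)$. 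The identical argument with $F$ and $G$ interchanged produces the bound involving $\xi(G)$; and a variant at vertices $u\in F\cap G$, counting their neighbours in \emph{both} $F\setminus G$ and $G\setminus F$, yields the bound involving $\xi(F\cap G)$. Taking the maximum of the three completes (ii).

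For (iii), if $F\cap G$ is not a ridge then $j\le d-3$, so $d-2-j\ge 1$; by (i) every vertex of $F\cap G$ then has positive excess, hence is nonsimple, and $P$ is not simple. The main obstacle I expect is the upgraded comparison at $u\in F\cap G$ in (ii): the neighbours contributing to $\deg_F u$ and those in $G\setminus F$ must be counted without overlap, and the resulting degree inequality must be converted cleanly into one for excesses so that the three cases (corresponding to $F$, $G$ and $F\cap G$) come out symmetrically with the same additive term $(d-2-j)(j+1)$.
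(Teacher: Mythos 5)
Your argument is correct, and its core is the same degree count as the paper's: at a vertex $v$ of $F\cap G$ you count at least $j$ neighbours inside $F\cap G$ and at least $d-1-j$ in each of $F\setminus G$ and $G\setminus F$, these three sets being disjoint, giving degree at least $2d-2-j$ and hence part (i); part (iii) then follows exactly as you say. Where you go beyond the paper is part (ii): the paper's written proof only makes explicit the bound $\xi(P)\ge(d-2-j)(j+1)$, leaving the additive term $\max\{\xi(F),\xi(G),\xi(F\cap G)\}$ tacit, whereas you supply it via the per-vertex comparisons $\xi_P(u)\ge\xi_F(u)$ for every $u\in F$ (each vertex of a facet has a neighbour outside it) and the upgraded $\xi_P(u)\ge\xi_F(u)+(d-2-j)$, respectively $\xi_P(u)\ge\xi_{F\cap G}(u)+(d-2-j)$, for $u\in F\cap G$, then summing and using nonnegativity of the excess elsewhere. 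The double-counting worry you flag is harmless, since the extra neighbours you add to $\deg_F u$ (resp.\ to $\deg_{F\cap G}u$) lie in $G\setminus F$ (resp.\ in $(F\cup G)\setminus(F\cap G)$), hence are disjoint from those already counted; and your auxiliary fact, that a vertex of an $r$-face of a $k$-polytope has at least $k-r$ neighbours outside that face, is correctly reduced via the vertex figure to the statement that a polytope has at least $k'-r'$ vertices outside any proper $r'$-face, which the affine-span argument gives. So your write-up is, if anything, a more complete justification of the statement than the one printed.
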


\begin{proof}
Set $R:=F\cap G$. Then, for any vertex $u$ in $R$, the degrees of $u$ in $R$, $F\setminus R$ and $G\setminus R$ are at least $j$, $d-1-j$ and $d-1-j$, respectively. So the total degree of $u$ in $F_1\cup F_2$ is at least $2d-2-j$. This implies that the excess degree of each vertex in $R$ is at least $d-2-j$. Since there are at least $j+1$ vertices in $R$, the total excess degree of $P$ is at least $(d-2-j)(j+1)$.
\end{proof}

 In particular, if two facets of a $d$-polytope intersect in a face $K$ with $\dim K<d-2$, then every vertex in $K$ is nonsimple in the polytope.
We will call a polytope {\bf semisimple}  if every pair of facets is either disjoint, or intersects in a ridge. Clearly every simple polytope is semisimple. The converse is false. A pyramid over $\Delta_{2,2}$, which is defined in the next subsection, is one  counterexample. In fact, it is the easiest example, as the next result shows. Some related examples are discussed in \S4.1.

\begin{lemma}\label{lem:semilowdim}
If $d\le4$, every semisimple $d$-polytope is simple.
\end{lemma}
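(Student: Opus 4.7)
The plan is to handle the cases $d=2,3,4$ separately; the case $d=2$ is trivial since all polygons are simple, so the content is in $d=3$ and $d=4$.

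For $d=3$, I would argue directly at a vertex. Let $v$ be any vertex of a 3-polytope $P$, and let $k:=\deg v$. The $k$ facets of $P$ containing $v$ are arranged cyclically $F_1,\dots,F_k$ around $v$ (via the vertex figure, which is a polygon), with $F_i\cap F_{i+1}$ equal to one of the edges at $v$, and $F_i\cap F_j=\{v\}$ whenever $i,j$ are not cyclically adjacent. If $k\ge 4$, then $F_1\cap F_3=\{v\}$, a face of dimension $0$, not a ridge; so $P$ is not semisimple. Hence every vertex of a semisimple 3-polytope has degree $3$, i.e.\ $P$ is simple.

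For $d=4$, the natural tool is the vertex figure. Let $v$ be a vertex of a semisimple 4-polytope $P$, and let $Q:=P/v$ be its vertex figure, a 3-polytope. The facets of $P$ containing $v$ are in bijection with the facets of $Q$, and if $F,G$ are two such facets of $P$ then $\dim(F/v\cap G/v)=\dim(F\cap G)-1$. Because $F$ and $G$ both contain $v$ they are not disjoint, so by semisimplicity $F\cap G$ is a ridge of $P$, i.e.\ a $2$-face; correspondingly the two facets of $Q$ meet in an edge. So I am reduced to the following key statement, which is the main obstacle: \emph{the only 3-polytope in which every pair of facets meets in an edge is the tetrahedron}. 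Granting this, $Q$ is a simplex, so $v$ has exactly $4$ neighbours and is simple in $P$.

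To prove the key statement, the cleanest route is via duality and planarity. If $Q$ is a 3-polytope in which every two facets share an edge, then the dual polytope $Q^*$ is a 3-polytope whose graph contains a complete graph on $f_2(Q)$ vertices as a subgraph. Since graphs of 3-polytopes are planar (Steinitz), and $K_5$ is non-planar, we get $f_2(Q)\le 4$, forcing $Q$ to be a tetrahedron. A purely combinatorial alternative, which avoids invoking Steinitz, is to use Euler's formula on $Q$: since each edge of $Q$ lies in exactly two facets, the assumption gives $\binom{f_2}{2}\le f_1$; combining with $f_0-f_1+f_2=2$ and the elementary bound $f_0\le 2f_2-4$ yields $(f_2-3)(f_2-4)\le 0$, hence $f_2=4$ and $Q$ is a simplex. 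Either route completes the proof.
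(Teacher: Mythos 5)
Your proof is correct. It is, however, essentially the dual formulation of the paper's argument, localised at a vertex: the paper simply passes to the dual polytope $P^*$, observes that semisimplicity of $P$ means every facet of $P^*$ is 2-neighbourly, and concludes that a non-simplex facet (which exists if $P$ is nonsimple) forces $\dim P^*\ge5$ -- a two-line argument that covers $d=3$ and $d=4$ simultaneously and in fact shows semisimple-but-nonsimple polytopes only exist for $d\ge5$. Your vertex-figure version is the same fact seen locally, since the facet of $P^*$ corresponding to a vertex $v$ is dual to the vertex figure $P/v$, and ``every two facets of $Q$ share an edge'' is exactly ``$Q^*$ is 2-neighbourly''. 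What your write-up buys is self-containedness: you treat $d=3$ by the elementary cyclic arrangement of facets around a vertex, and you actually prove the key 3-dimensional classification (only the tetrahedron has every pair of facets meeting in an edge) via planarity of $K_5$ or via Euler's relation, whereas the paper leaves the corresponding fact about 2-neighbourly polytopes of dimension at most 3 implicit. The paper's route is shorter and dimension-uniform; yours is more explicit and avoids quoting the dual-polytope dictionary beyond the vertex-figure correspondence. Both are valid.
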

\begin{proof}
Suppose that $P$ is semisimple but not simple. Then its dual $P^*$ is not simplicial, but every facet of $P^*$ must be 2-neighbourly. Thus the facets of $P^*$ have dimension at least 4, and $P$ must have dimension at least 5.
\end{proof}

\begin{lemma}\label{lem:facets-intersecting-at-ridges} Let $P$ be a semisimple $d$-polytope. Then every facet containing a nonsimple vertex of $P$ is also nonsimple. Furthermore, each nonsimple vertex of $P$ is nonsimple in each of the facets containing it.\end{lemma}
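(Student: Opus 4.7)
The plan is to prove both conclusions simultaneously by showing that if $v$ is nonsimple in $P$ and $F$ is any facet containing $v$, then $v$ is already nonsimple in $F$ (this immediately forces $F$ to be a nonsimple polytope).

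First I would count: since $v$ is nonsimple in $P$, it lies in some $k \ge d+1$ facets of $P$, namely $F$ and $k-1 \ge d$ other facets $G_1, \dots, G_{k-1}$. For each $i$, the intersection $F \cap G_i$ is nonempty (it contains $v$), so by the semisimple hypothesis $F \cap G_i$ is a ridge of $P$, i.e.\ a facet of $F$ passing through $v$.

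Next I would argue these ridges are pairwise distinct. If $F \cap G_i = F \cap G_j = R$ for $i \ne j$, then the ridge $R$ would be contained in the three distinct facets $F, G_i, G_j$, contradicting the fundamental fact (recalled at the start of the Background section) that every ridge lies in exactly two facets. So $v$ lies in at least $d$ distinct facets of $F$. Since a simple vertex of the $(d-1)$-polytope $F$ belongs to exactly $d-1$ of its facets, $v$ must be nonsimple in $F$. This gives the second assertion directly, and the first follows because $F$ then contains the nonsimple vertex $v$.

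No real obstacle here; the only subtlety is the distinctness of the $F \cap G_i$, which is handled by the two-facets-per-ridge principle. Note that the argument uses the semisimple hypothesis in an essential way — without it, a facet $G_i$ through $v$ could meet $F$ in a face of lower dimension, destroying the count of facets of $F$ through $v$.
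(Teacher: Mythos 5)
Your argument is correct and is essentially the paper's own proof: you count the facets of $P$ through the nonsimple vertex, use semisimplicity to turn each intersection with $F$ into a ridge through that vertex, and conclude the vertex lies in at least $d$ facets of the $(d-1)$-polytope $F$, hence is nonsimple there. The only difference is that you make explicit the distinctness of these ridges (via the two-facets-per-ridge fact), which the paper leaves implicit, and the paper disposes of $d=3$ separately via \cref{lem:simpleNonsimpleVertex}.
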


\begin{proof} If $d=3$, then \cref{lem:simpleNonsimpleVertex} gives at once that the polytope is simple. So we let $d>3$. Let $u$ be a nonsimple vertex of $P$ and let $F$ be some facet containing it. Since every other facet containing $u$ intersects $F$ at a ridge, there are at least $d$ ridges of $F$ containing $u$, which implies that $u$ is nonsimple in $F$, and $F$ is therefore nonsimple.
\end{proof}

\subsection{{Taxonomy} of polytopes}

In this subsection we introduce or recall
families of polytopes which are important for this work.

There is a 3-polytope with six vertices, ten edges and six facets (four quadrilaterals and two triangles), which can fairly be described as the simplest polyhedron with no widely accepted name. Kirkman \cite[p. 345]{Kir63} called it a 2-ple zoneless monaxine heteroid, having an amphigrammic axis, but this nomenclature never caught on. More recently, Michon \cite{Mic07} has descriptively called it a tetragonal antiwedge. We will use his terminology here, and abbreviate it to $\TA$; see \cref{fig:polytopes}(d). This humble example will naturally appear several times in this paper.

\begin{figure}
\includegraphics{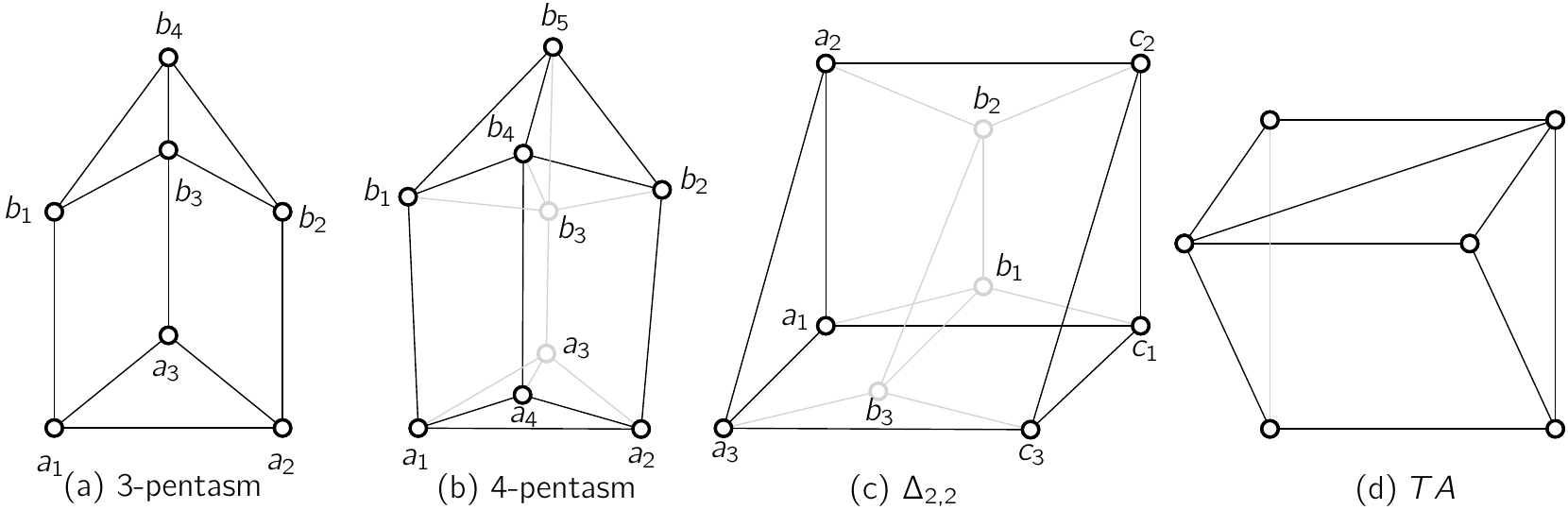}
\caption{Solid realisations or Schlegel diagrams of polytopes.}\label{fig:polytopes}
\end{figure}

Call {\it simplicial $d$-prism}  any prism whose base is a $(d-1)$-simplex. We will often refer to these simply as prisms. They each have $2d$ vertices, $d^2$ edges, and $d+2$ facets. Being simple, they have excess degree 0.

Define a {\it triplex} as any multifold pyramid over a simplicial prism. More precisely, we will call a $(d-k)$-fold pyramid over the simplicial $k$-prism a {\it $(k,d-k)$-triplex}, and denote it by  $M_{k,d-k}$; here $1\le k\le d$. This triplex clearly has $d+k$ vertices. Of course $M_{1,d-1}$ is just a simplex, and $M_{d,0}$ is a simplicial $d$-prism. It is well known (see \cite{McM71} and  \cite[Ch. 10]{Gru03}) and easily checked, that triplices (other than the simplex) have $d+2$ facets. They were studied in some detail in \cite{PinUgoYos15}. It is routine to check that $M_{k,d-k}$ has excess degree $(k-1)(d-k)$.

More generally, denote by $\Delta_{m,n}$ the cartesian product of a simplex in $\r^m$ and a simplex in $\r^n$. Alternatively, $\Delta_{m,n}$ can be described as the sum of an $m$-dimensional simplex and an $n$-dimensional simplex lying in complementary subspaces of $\r^{m+n}$. The polytope $\Delta_{m,n}$ is a simple $(m + n)$-dimensional polytope with $(m + 1)(n + 1)$ vertices, $\frac{1}{2}(m+n)(m+1)(n+1)$ edges and $m+n+2$ facets. Likewise $\Delta_{m,n,p}$ will denote  the cartesian product of three simplices, $m$-dimensional, $n$-dimensional and  $p$-dimensional, respectively.

\begin{rmk}\label{rmk:dplus2Facets} Any polytope with $d+2$ facets is combinatorially isomorphic to an $r$-fold pyramid over $\Delta_{m,n}$ for some values of $m,n,r$; see ~\cite[Sec~6.1]{Gru03} and \cite[p.~352]{McM71}.
In particular, a simple polytope with $d+2$ facets must be $\Delta_{m,n}$ for some $m,n$ with $m+n=d$.
\end{rmk}

The $d$-dimensional {\it pentasm}, or just $d$-pentasm, was defined in \cite[\S4]{PinUgoYos15} as the Minkowski sum of a simplex and a line segment which is parallel to one triangular face,
but not parallel to any edge, of the simplex; or any polytope combinatorially equivalent to it. The same polytope is obtained by truncating a simple vertex of the triplex $M_{2,d-2}$. Pentasms have $2d+1$ vertices, $d^2+d-1$ edges, and hence excess degree $d-2$; see \cref{fig:polytopes}(a)-(b) for drawings of them.

\begin{rmk}[Facets of a $d$-pentasm] \label{rmk:Pentasm-Facets}The facets of $d$-pentasm, $d+3$ in total, are as follows.
\begin{itemize}
\item $d-2$ copies of a $(d-1)$-pentasm,
\item 2 simplicial prisms, and
 \item 3 simplices.
\end{itemize}
\end{rmk}

We can consider the pentagon as a 2-dimensional pentasm.

Take any polytope $P$ with dimension $d\ge4$ and stack  a vertex $v_0$ beyond one facet and move it slightly so that $v$ is contained in the affine hull of $\ell$ other facets
($0\le \ell\le d-3$). By \emph{stacking} we mean as usual adding a vertex beyond a facet of $P$ and taking the convex hull. A point $w$ is {\it beyond} a face $R$ of $P$ if the facets of $P$ containing $R$ are precisely those that are visible from $w$, where  a facet $F$ of $P$ is said to be {\it visible} from the point $w$ if $w$ belongs to the open half-space determined by the affine hull of $F$ which does not meet $P$. All the $d-2$ polytopes constructed in this manner have the same graph, some have even the same $(d-3)$-skeleton.

If $P$ is taken to be a simplicial $d$-prism, and we stack a vertex on one of the simplex facets, we will call the resulting polytopes {\it capped $d$-prisms}.
Call the {\it extra} vertex $v_0$, and denote by $k$ the minimum dimension of any face of the simplicial $d$-prism whose affine hull contains it. The combinatorial type of such a polytope depends on the value of $k$; let us denote it by $CP_{k,d}$. Note that $k\ge1$, otherwise $v_0$ would be a vertex of the prism. For $k=1$, the capped prism $CP_{1,d}$ will be (combinatorially) just another prism. For $k=2$, $CP_{2,d}$ is a pentasm, with $d^2+d-1$ edges. For $3\le k\le d$ and fixed $d\ge4$, the  $d-2$ polytopes $CP_{k,d}$ are  combinatorially distinct, although their graphs are all isomorphic, with $2d+1$ vertices and $d^2+d$ edges. In particular, they all have excess degree $d$. However $CP_{k,d}$ has $d+k+1$ facets, so their $f$-vectors  are all distinct.

\begin{rmk}[Facets of the Capped Prism $CP_{k,d}$] \label{rmk:CappedPrismFacets} For $3\le k\le d$, the facets of the Capped Prism $CP_{k,d}$, $d+k+1$ in total, are as follows.
\begin{itemize}
\item $d-k$ copies of  $CP_{k,d-1}$,
\item $k$ simplicial prisms, and
\item $k+1$ simplices.
\end{itemize}
\end{rmk}

Denote by $A_d$ a polytope obtained by slicing a one nonsimple vertex of  a $(2,d-2)$-triplex. This polytope can be also realised as a prism over a $(d-3)$-fold pyramid over a quadrilateral. These polytopes have $2d+2$ vertices and excess degree $2d-6$.

\begin{rmk}[Facets of $A_d$] \label{rmk:Ad-Facets}The facets of the $d$-polytope $A_d$, $d+3$ in total, are as follows.
\begin{itemize}
\item $d-3$ copies of $A_{d-1}$,
\item 4 simplicial prisms, and
 \item 2 copies of $M_{2,d-3}$.
\end{itemize}
\end{rmk}

Denote by $B_d$  a polytope obtained by truncating a simple vertex of a $(3,d-3)$-triplex. The polytope $B_3$ is the well known {\it 5-wedge}; it can be obtained as the wedge \cite[pp. 57--58]{KleWal67} at an edge over a pentagon. The polytope $B_d$ can also be realised as follows: the convex hull of $B_3$ and a simplicial $(d-3)$- prism $R$, where each vertex of one copy of the $(d-4)$-simplex in $R$ is connected to each of the three vertices in a triangle of $B_3$, and  each vertex of the other copy of the $(d-4)$-simplex in $R$ is connected to each of the remaining five vertices of $B_3$.     These polytopes also have $2d+2$ vertices and excess degree $2d-6$.

\begin{rmk}[Facets of $B_d$]\label{rmk:Bd-Facets} The facets of the $d$-polytope $B_d$, $d+3$ in total, are as follows.
\begin{itemize}
\item $d-3$ copies of $B_{d-1}$,
\item 2 simplices,
\item 1 simplicial prism,
 \item 1 copy of $M_{2,d-3}$, and
 \item 2 pentasms.
\end{itemize}
\end{rmk}

Denote by $C_d$ a polytope obtained by slicing one {\it simple edge}, i.e. an edge whose endvertices are both simple, of a  $(2,d-2)$-triplex. It has $3d-2$ vertices and excess degree $d-2$.

\begin{rmk}[Facets of $C_d$]\label{rmk:Cd-Facets} The facets of the $d$-polytope $C_d$, $d+3$ in total, are as follows.
\begin{itemize}
\item $d-2$ copies of $C_{d-1}$,
\item 3 simplicial prisms,
 \item 1 copy of $\Delta_{2,d-3}$, and
 \item 1 simplex.
\end{itemize}

For consistency here, we can define $C_2$ as a quadrilateral.
\end{rmk}

Denote by $\Sigma_d$ a certain polytope which can be expressed as the Minkowski sum of two
simplices. One concrete realisation of it is given by the convex hull of
 \[\{0,e_1,e_1+e_k,e_2,e_2+e_k,e_1+e_2,e_1+e_2+2e_k: 3\le k\le d\},\]
where $\{e_i\}$ is the standard basis of $\mathbb{R}^d$. It is easily shown to have $3d-2$ vertices; of these, one has excess degree $d-2$, and the rest are simple.
\begin{rmk}[Facets of $\Sigma_d$]\label{rmk:Sigmad-Facets} The facets of the $d$-polytope $\Sigma_d$, $d+3$ in total, are as follows.
\begin{itemize}
\item $d-1$ copies of $\Sigma_{d-1}$,
\item 2 simplicial prisms,
 \item 2 simplices.
\end{itemize}

For consistency, we can also define $\Sigma_2$ as a quadrilateral.
\end{rmk}

Diagrams of $A_4, B_4, C_4$ and $\Sigma_4$ appear in \cref{fig:10v-21e}(b) in \S6.

Let us denote by $\Gamma_{m,n}$ the result of truncating one vertex from $\Delta_{m,n}$. This is a simple $(m + n)$-dimensional polytope with $mn + 2m+2n$ vertices and $m+n+3$ facets.

\begin{rmk}[Facets of $\Gamma_{m,n}$]\label{rmk:Gamma-Facets} The facets of the $d=m+n$-polytope $\Gamma_{m,n}$, $d+3$ in total, are as follows.
\begin{itemize}
\item $m$ copies of $\Gamma_{m-1,n}$,
\item $n$ copies of $\Gamma_{m,n-1}$,
\item $1$ copy of $\Delta_{m-1,n}$,
\item $1$ copy of $\Delta_{m,n-1}$,
 \item 1 simplex.
\end{itemize}
\end{rmk}

Denote by $J_d$ the special case $\Gamma_{d-1,1}$, i.e. the polytope obtained by slicing one vertex of a  simplicial $d$-prism; it clearly has $3d-1$ vertices. Observe that $J_2$ is a pentagon, and that $B_3$ coincides with $J_3$; so $J_d$ can be considered a generalisation of the 5-wedge.

\begin{rmk}[Facets of $J_d$]\label{rmk:Jd-Facets} The facets of the $d$-polytope $J_d$, $d+3$ in total, are as follows.
\begin{itemize}
\item $d-1$ copies of $J_{d-1}$,
\item 2 simplicial prisms,
 \item 2 simplices.
\end{itemize}
\end{rmk}

It can be shown that, for  $d\ne3, 7$, $J_d$ is the unique simple polytope with $3d-1$ vertices; see \cref{lem:CountSimpleP} below.

Some of the examples we have just defined coincide in low dimensions. In particular, $A_3=\Delta_{1,1,1}$, $B_3=J_3$ and $C_3=\Sigma_3$. By definition $\Delta_{1,d-1}=M_{d,0}$ and $J_d=\Gamma_{d-1,1}$.

With the fundamental examples now elucidated, we can characterise all the simple $d$-polytopes with up to $3d$ vertices, which will  be  useful later in the paper. First we need  the following result, of independent interest.
We omit the case $d=2$ in the statement of the next two results, since all polygons are simple.

\begin{proposition}\label{prop:simple-prisms-simplices}
A simple $d$-polytope $P$ in which every facet is a prism or a simplex is either  a $d$-simplex, a simplicial $d$-prism, $\Delta_{1,1,1}$ or $\Delta_{2,2}$.
\end{proposition}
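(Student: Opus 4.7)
My plan is to prove this by induction on $d$, with base case $d=3$.

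For $d=3$: classify simple 3-polytopes with all facets being triangles or quadrilaterals (the only 2-simplices and simplicial 2-prisms). Let $f_3, f_4$ denote the number of triangular and quadrilateral facets. Simplicity gives $2f_1 = 3f_0$; the facet-edge incidence gives $3f_3 + 4f_4 = 2f_1$; and Euler's formula gives $f_0 - f_1 + f_3 + f_4 = 2$. These combine to $3f_3 + 2f_4 = 12$, whose only nonnegative integer solutions $(4,0), (2,3), (0,6)$ realize the tetrahedron, triangular prism, and $\Delta_{1,1,1}$ respectively.

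For $d\ge4$, assume the result in dimension $d-1$. Each facet of $P$ is a simple $(d-1)$-polytope, and its own facets (ridges of $P$) are shared with other facets of $P$, hence are themselves $(d-2)$-simplices or simplicial $(d-2)$-prisms. If every facet of $P$ is a simplex, then $P$ is simple and simplicial, so $P$ is a $d$-simplex. Otherwise fix a prism facet $F \cong \Delta_{1,d-2}$, with simplex bases $B_1, B_2$ and prism sides $S_1, \ldots, S_{d-1}$. Since a simplex has no prism facets, the facet $F_j$ meeting $F$ across each side $S_j$ must be a $(d-1)$-prism with $S_j$ one of its own sides. The facet $G_i$ meeting $F$ across each base $B_i$ is either a $(d-1)$-simplex or a $(d-1)$-prism attached via one of its bases.

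The key technical step is a local analysis at a vertex $v \in B_1$: simplicity forces the $d$ facets of $P$ through $v$ to be exactly $F$, $G_1$, and the $d-2$ prisms $F_j$ for which $v \in S_j$. If $G_1$ is itself a prism, then repeating this analysis from the side of $G_1$ shows that every prism adjacent to $G_1$ along a side through $v$ coincides with one of the $F_j$; in particular each $F_j$ is glued to \emph{both} $F$ and $G_1$ along sides. This rigidity should be made to propagate globally, ultimately forcing the other base of $G_1$ to meet $G_2$. Then in either case one concludes that $P$ has exactly $d+2$ facets, namely $F, G_1, G_2, F_1, \ldots, F_{d-1}$. By \cref{rmk:dplus2Facets}, $P = \Delta_{m,n}$ for some $m,n$ with $m+n=d$. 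In Case (a), when $G_1, G_2$ are simplex facets, this forces $\{m,n\} = \{1,d-1\}$, so $P$ is the simplicial $d$-prism. In Case (b), when $G_1$ is a prism (so all facets of $P$ are prisms), we need $m,n \ge 2$, and to make the facets $\Delta_{m-1,n}, \Delta_{m,n-1}$ be simplicial prisms, we need $m=n=2$; hence $d=4$ and $P = \Delta_{2,2}$. For $d\ge 5$ this case yields a contradiction, and $\Delta_{1,1,1}$ is captured by the base case.

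The main obstacle is Case (b): the rigidity lemma must be set up carefully, and the ``base-cycle closure'' (showing the second base of $G_1$ does not introduce a new facet distinct from $G_2$) requires another careful vertex-figure argument, likely at the vertices of the second base of $G_1$. Once the facet count $d+2$ is established, the identification with $\Delta_{m,n}$ via \cref{rmk:dplus2Facets} makes the remainder clean, and dimensional constraints on the facet types of $\Delta_{m,n}$ handle all $d$ uniformly.
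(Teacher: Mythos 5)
Your base case $d=3$ is fine, and your reduction of the inductive step to the statement ``$P$ has exactly $d+2$ facets, namely $F,G_1,G_2,F_1,\ldots,F_{d-1}$'' (after which \cref{rmk:dplus2Facets} and the dimensional constraints on the facets of $\Delta_{m,n}$ do finish things cleanly) is a sensible plan. But the proof has a genuine gap exactly where you flag it: the facet count $d+2$ is asserted, not proved. Your local analysis at a vertex $v\in B_1$ only controls the facets through vertices of $F$ (and, with a further argument you have not given, through the two or four vertices immediately beyond $B_1$ and $B_2$); nothing yet rules out vertices of $P$ lying outside $F\cup G_1\cup G_2\cup\bigcup_j F_j$ and facets containing no vertex of $F$ at all. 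In the case where $G_1,G_2$ are simplices this can be repaired by a closure argument (every vertex of $B_i$ has its unique edge leaving $F$ going to the apex of $G_i$, those apices are simple with all their facets among the listed ones, so the vertex set $\ver F\cup\{w_1,w_2\}$ is closed under adjacency and connectivity of the graph finishes it), but you neither state nor carry out this step. The all-prism case is worse: ``this rigidity should be made to propagate globally, ultimately forcing the other base of $G_1$ to meet $G_2$'' is precisely the hard part of the proposition (it is what excludes an arbitrarily long chain of prism facets when $d\ge5$), and ``should be made to'' is not an argument. Your case split is also loose: the mixed case ($G_1$ a simplex, $G_2$ a prism) is never addressed, and the parenthetical ``so all facets of $P$ are prisms'' in case (b) is only justified after the identification with $\Delta_{m,n}$, which you have not yet earned at that point.

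For comparison, the paper avoids all of this global bookkeeping with one structural observation plus Balinski's theorem: if some facet $F$ is a prism, take a ridge $R$ of $F$ that is itself a prism; the other facet $G$ containing $R$ is then forced to be a prism, and every vertex of $R$ already has degree $d$ inside $F\cup G$, so it has no neighbours outside $F\cup G$. Either there are no vertices outside $F\cup G$ (and $P$ is a simplicial prism), or deleting the four vertices of $(F\cup G)\setminus R$ disconnects the graph, so $d$-connectivity forces $d\le4$; the low-dimensional cases are then settled by Euler's relation ($d=3$) and an $f$-vector count using the Lower Bound Theorem together with \cref{rmk:dplus2Facets} ($d=4$). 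If you want to salvage your inductive scheme, the cleanest fix is to import exactly this mechanism: your prisms $F$ and $F_j$ (or $F$ and a prism $G_1$) share a prism ridge, and the resulting connectivity obstruction is what makes the ``base-cycle closure'' work, or rather shows it cannot occur at all once $d\ge5$.
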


\begin{proof}
If $P$ is not a simplex, then it is not simplicial, so assume that some facet $F$ is a prism. Choose a ridge $R$ in $F$ which is also a prism, and let $G$ be the other facet containing $R$. Then $G$ is also a prism and every vertex in $R$ has degree $d$ in the subgraph $F\cup G$. So no vertex in $R$ is adjacent to any vertex outside $F\cup G$. If there are no vertices outside $F\cup G$, then $P$ must be a simplicial prism. Otherwise, the removal of the four vertices in $(F\cup G)\setminus R$ from the graph of $P$ will  disconnect it. By Balinski's Theorem  \cite[Thm.~3.14]{Zie95}, we then have $d\le4$. Denote by $t$ the number of triangular 2-faces of $P$, and by $q$ the number of quadrilateral 2-faces.

If $d=3$,  simplicity implies $2f_1=3f_0$. Then $f_0$ is even and
$$0\le t=4(t+q)-(3t+4q)=4f_2-2f_1=4(f_1-f_0+2)-2f_1=2f_1-4f_0+8=3f_0-4f_0+8=8-f_0.$$
Thus either $f_0=4$ and $P$ is a simplex, or $f_0=6$ and $P$ is a prism, or $f_0=8$ and $P$ is a cube.

If $d=4$, simplicity implies $2f_1=4f_0$, whence $f_2=f_1+f_3-f_0=f_0+f_3$. Simplicity also ensures that every edge belongs to precisely three 2-faces. By hypothesis, every facet contains at least 2 triangular faces, and every triangular face, being a ridge, belongs to precisely 2 facets. Thus $t\ge f_3$. The Lower Bound Theorem \cite{Bar73} tells us $f_0\ge3f_3-10$. Then
\begin{eqnarray*}
\h f_3\le\h t& = & 2(t+q)-\h(3t+4q)\\
& = & 2f_2-\frac{3}{2}f_1 \\
& = & 2(f_0+f_3)-3f_0 \\
& = & 2f_3-f_0 \\
& \le & 2f_3-(3f_3-10) \\
& = &10-f_3.
\end{eqnarray*}
Thus $\frac{3}{2}f_3\le10$. So either $f_3=5$ and $P$ is a simplex, or $f_3=6=d+2$, and \cref{rmk:dplus2Facets} tells us that $P$ has the form $\Delta_{m,n}$ for some $m,n$ with $m+n=4$.
\end{proof}

\begin{lemma}\label{lem:CountSimpleP} Up to combinatorial equivalence,
\begin{enumerate}[(i)]
\item the simplex $\Delta_{0,d}$ is the only simple $d$-polytope with strictly less than $2d$ vertices;
\item the simplicial prism $\Delta_{1,d-1}$  is the only simple $d$-polytope with between $2d$ and $3d-4$ vertices;
\item $\Delta_{2,d-2}$ is the only simple $d$-polytope with  $3d-3$ vertices;
\item the only simple $d$-polytope with  $3d-2$ vertices is the 6-dimensional polytope $\Delta_{3,3}$;
\item the only simple $d$-polytopes with  $3d-1$ vertices are the polytope $J_d$, the 3-dimensional cube $\Delta_{1,1,1}$ and the 7-dimensional polytope $\Delta_{3,4}$;
\item there is a simple polytope with $3d$ vertices if, and only if, $d= 4$ or 8; the only possible examples are $\Delta_{1,1,2}$, $\Gamma_{2,2}$ and  $\Delta_{3,5}$.
\end{enumerate}
\end{lemma}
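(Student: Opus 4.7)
The plan is to first bound the number of facets $n=f_{d-1}(P)$ and then enumerate the polytopes in each resulting case. I would begin by applying Barnette's Lower Bound Theorem \cite{Bar73} in its dual form for simple polytopes, which gives
\[f_0(P)\;\ge\;(d-1)n-(d+1)(d-2).\]
Combined with $f_0\le 3d$, this yields $n\le d+3+\tfrac{1}{d-1}$; since $n$ is an integer and $d\ge 3$, we conclude $n\le d+3$.

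For $n=d+1$, the dual has $d+1$ vertices and so is a simplex; hence $P$ is a simplex, accounting for case (i). For $n=d+2$, \cref{rmk:dplus2Facets} gives $P=\Delta_{m,d-m}$ for some $m$ (simplicity precludes the pyramidal versions), with $f_0=(m+1)(d-m+1)$. Enumerating over $0\le m\le d/2$ under the constraint $(m+1)(d-m+1)\le 3d$ yields exactly: the simplicial prism $\Delta_{1,d-1}$ with $f_0=2d$ (case (ii)); the polytope $\Delta_{2,d-2}$ with $f_0=3d-3$ (case (iii)); and the sporadic examples $\Delta_{3,3}$ for $d=6$ ($f_0=3d-2$, case (iv)), $\Delta_{3,4}$ for $d=7$ ($f_0=3d-1$, case (v)), and $\Delta_{3,5}$ for $d=8$ ($f_0=3d$, case (vi)).

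The case $n=d+3$ carries the real weight of the proof. Here the LBT forces $f_0\ge 3d-1$, so only $f_0\in\{3d-1,3d\}$ is compatible with our range. I would invoke the classical Gale-diagram classification of simple $d$-polytopes with $d+3$ facets (equivalently, of simplicial $d$-polytopes with $d+3$ vertices) as developed in \cite[Ch.~6]{Gru03}. For $f_0=3d-1$ the polytope realises LBT equality; for $d\ge 4$ the simplicial side is a uniquely determined $2$-stacked polytope whose dual is $J_d=\Gamma_{d-1,1}$, while for $d=3$ Euler's relation permits the additional combinatorial type $\Delta_{1,1,1}$, confirmed by a direct face-count argument in dimension $3$. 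For $f_0=3d$ the same classification produces exactly $\Delta_{1,1,2}$ and $\Gamma_{2,2}$ in dimension $4$, and no examples in dimensions $5,6,7$; the example $\Delta_{3,5}$ in dimension $8$ has $n=d+2$ and is already accounted for.

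The main obstacle is the sub-case $(n,f_0)=(d+3,3d)$: the polytope sits strictly above the LBT boundary, so the equality case of Barnette's theorem does not pin it down. The cleanest route is the Gale-diagram appeal above; a more self-contained alternative would induct on $d$, using the double count $\sum_F f_0(F)=df_0$ together with parts (i)--(v) of the lemma applied to the facets of $P$ to restrict the possible facet structures, and then reconstruct $P$ combinatorially from its facet data.
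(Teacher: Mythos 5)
Your reorganisation of the problem --- bounding the number of facets $n$ by the dual Lower Bound Theorem, $f_0\ge (d-1)n-(d+1)(d-2)$, so that $f_0\le 3d$ forces $n\le d+3$, and then splitting into $n=d+1$, $d+2$, $d+3$ --- is a genuinely different route from the paper, which simply quotes \cite[Lem.~10]{PrzYos16} for parts (i)--(v) and proves only (vi) directly. Your treatment of $n=d+1$ and $n=d+2$ (via \cref{rmk:dplus2Facets}) is correct, and the subcase $n=d+3$, $f_0=3d-1$ is plausible as sketched, provided you cite precisely the equality case of Barnette's theorem (equality for $d\ge4$ forces the dual simplicial polytope to be stacked) and add a sentence for the uniqueness of the stacked polytope with $d+3$ vertices (dually: truncating any vertex of the prism gives $J_d$, by vertex-transitivity of the prism). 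So far this buys a cleaner derivation of (i)--(v) than an appeal to the literature.

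However, there is a genuine gap exactly where the real content of the lemma lies, namely part (vi) in the subcase $n=d+3$, $f_0=3d$. First, nothing in your argument bounds the dimension here: the Lower Bound Theorem only gives $f_0\ge 3d-1$, which is compatible with $f_0=3d$ in every dimension, so you must exclude simple polytopes with $d+3$ facets and $3d$ vertices for \emph{every} even $d\ge 6$ (parity of $2f_1=3d^2$ disposes of odd $d$), not merely $d=5,6,7$; your remark that $\Delta_{3,5}$ has $d+2$ facets does not exclude other $8$-dimensional examples with $11$ facets, nor anything in dimensions $10,12,\dots$. Second, the citation you lean on does not exist in usable form: \cite[Ch.~6]{Gru03} develops Gale diagrams for polytopes with $d+3$ vertices, but it contains no ready-made list of simple $d$-polytopes with $d+3$ facets by vertex number; extracting the facet/vertex counts you need from that machinery is precisely the work that must be done, and the delicate case is $d=6$, which Lee \cite[Example 4.4.17]{Lee81} originally settled with the $g$-theorem and which the paper excludes by a bespoke counting argument over the possible facets ($J_5$, $\Delta_{2,3}$, prisms, simplices). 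Your fallback sketch --- induct on $d$, use $\sum_F f_0(F)=df_0$ and parts (i)--(v) applied to facets --- is in essence the paper's own method, but you leave it entirely unexecuted, and that is where all the difficulty resides (bounding $d\le 8$, excluding $d=6$, and establishing uniqueness of $\Delta_{1,1,2}$, $\Gamma_{2,2}$ for $d=4$ and of $\Delta_{3,5}$ for $d=8$). If you are willing to invoke the $g$-theorem, your framework can be closed more quickly: for a simplicial $d$-polytope with $d+3$ vertices that is not stacked one has $h_i\ge 4$ for $2\le i\le d-2$, hence $f_{d-1}\ge 4d-4>3d$ unless $d=4$; but that is a heavy tool the paper deliberately avoids, and the $d=4$ uniqueness check still remains. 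As it stands, the proposal does not prove part (vi).
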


\begin{proof} Assertions (i) to (v) are simply a rewording of \cite[Lem.~10(ii)-(iii)]{PrzYos16}. We prove (vi). Recall that two simple polytopes with the same graph must be combinatorially equivalent \cite[\S3.4]{Zie95}.

For $d=4$ or 8, the validity of the three examples given is easy to verify. Conversely, suppose $P$ is a simple polytope with $3d$ vertices. Then $d$ must be even.

Every facet of $P$ is simple, and so has an even number of vertices (because $d-1$ is odd).

If there were a facet with $3d-2$ vertices, there would be $3d-2$ edges running out of it and $2(d-1)$ edges running out of the two external vertices. But $2d-2\ne3d-2$.

So any facet has at most $3d-4=3(d-1)-1$ vertices. We know from (i) to (v) that any facet must be a simplex, a prism, or have $3d-6$ or $3d-4$ vertices.

First suppose some $F$ has $3d-4$ vertices. The four vertices outside all have degree $d$, so the number of edges between them and $F$ is at least $4(d-3)$. On the other hand, there are exactly $3d-4$ edges running out of $F$. So $4(d-3)\le3d-4$ whence $d\le8$.

Next suppose some facet $F$ has $3(d-1)-3$ vertices. Each of the six vertices outside each has degree $d$, so the number of edges between them and $F$ is at least $6(d-5)$. On the other hand, there are exactly $3d-6$ edges running out of $F$. Again $d\le8$.

Now we show that the 3 examples in the first paragraph are the only possibilities when $d= 4$ or 8. If $d=8$, every facet must be $\Delta_{2,5}$ or $\Delta_{3,4}$; a short calculation shows that $P$ must be $\Delta_{3,5}$.

If $d=4$, simplicity ensures that no facet has 10 vertices. Thus every facet is either a simplex, a prism, a cube or the 5-wedge $J_3$. \cref{prop:simple-prisms-simplices} ensures that at least one facet is neither a simplex nor a prism. If some facet is a cube, it is not hard to verify that four of the other facets must be prisms, and two of them must be cubes; this is the only way the 4 vertices outside can be connected to give us the graph of a simple polytope, and the graph is that of $\Delta_{1,1,2}$. Otherwise some facet is a 5-wedge, and there are a number of cases to consider. Since each such facet contains two pentagonal faces, and each pentagonal ridge belongs to two such facets, we can find a collection of facets $W_1, W_2, \ldots, W_k= W_0$, with each $W_i$ being a 5-wedge, and $W_i\cap W_{i+1}=P_i$ being a pentagon for each $i$. Clearly there must be at least three pentagonal faces, so $k\ge3$. We cannot have $k\ge5$, or there would be too many vertices. If $k=3$, note that $P_1\cap P_3$ and $P_2\cap P_3$ are both edges. If they were the same edge (namely $P_1\cap P_2\cap P_3$), then the graph of $W_1\cup W_2\cup W_3$ would contain eleven vertices, all of degree four, which is clearly impossible. Thus  $P_1\cup P_2$ (which is contained in $W_2$)  contains at least three of the vertices in $P_3$.  Then the affine hyperplane containing $W_2$ would also contain $P_3$. This means that all 3 facets lie in the same 3-dimensional affine subspace, which is absurd. So we must have  $k=4$. The graphs of these four facets  actually determine the entire graph of $P$; it is the graph of $\Gamma_{2,2}$.

Finally we need to show that the case $d=6$ does not arise. This was first proved by Lee \cite[Example 4.4.17]{Lee81}, using the $g$-theorem, but we give an independent argument. Suppose $P$ is such a 6-polytope; then every facet must be either a simplex, a prism, $J_5$ or $\Delta_{2,3}$. By \cref{prop:simple-prisms-simplices} we can choose a facet $F$ which is not as prism. Whether this facet is  $J_5$ or $\Delta_{2,3}$, we can choose a ridge $R$ therein  which is a 4-prism. Denote by $G$ the other facet containing $R$. It is not possible for both $F$ and $G$ to be $J_5$, because then $F\cup G$ would contain more than 18 vertices. If $F$ is $J_5$ and $G$ is $\Delta_{2,3}$, then $F\cup G$ would contain  18 vertices and 49 edges, with the six vertices in $F\setminus R$ guaranteeing another six edges; but $49+6>54$, contradicting simplicity. If $F$ is $J_5$ and $G$ is a prism, then $F\cup G$ would contain  16 vertices and 44 edges, with the two vertices outside $F\cup G$ guaranteeing another 11 edges; but $44+11>54$, again contradicting simplicity. Likewise if $F$  and $G$ are both $\Delta_{2,3}$, then $F\cup G$ would contain 16 vertices and 44 edges, with the two vertices outside $F\cup G$ guaranteeing another 11 edges. Finally if $F$ is $\Delta_{2,3}$ and $G$ is a prism, then $F\cup G$ would contain  14 vertices and 39 edges, with the four vertices outside $F\cup G$ guaranteeing another 18 edges; but $39+18>54$. This exhausts all the possibilities, so there is no simple 6-polytope with 18 vertices.
\end{proof}

\section{Possible values of the excess degree}

 We will denote by $\Xi(d)$ the set of possible values of the excess of all $d$-dimensional polytopes. The main result here is that the smallest two values in $\Xi(d)$ are 0 and $d-2$; nothing in between is possible.

If $j\le d-3$, then $(d-2-j)(j+1)\ge d-2$, so \cref{lem:Non-DisjointFacets} establishes this for any polytope which is not semisimple. Accordingly, we restrict our attention to semisimple polytopes. First consider the case in which every facet is simple.

\begin{proposition}\label{prop:CharPSimple} Let $P$ be a semisimple $d$-polytope in which every facet is simple. Then $P$ is simple.
\end{proposition}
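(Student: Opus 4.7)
The statement is essentially the contrapositive of (part of) Lemma \ref{lem:facets-intersecting-at-ridges}: in a semisimple polytope, every facet containing a nonsimple vertex is itself nonsimple. Under the hypothesis that every facet of $P$ is simple, $P$ therefore cannot contain a nonsimple vertex, and we are done in one line.

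If one prefers a self-contained argument, the natural approach is direct. Fix an arbitrary vertex $v$ of $P$ and a facet $F\ni v$. Since $F$ is simple and $(d-1)$-dimensional, $v$ is contained in exactly $d-1$ ridges $R_1,\dots,R_{d-1}$ of $F$. Each $R_i$, being a ridge of $P$, lies in exactly one other facet $F_i$ of $P$. I would then verify two things. \emph{Distinctness}: the facets $F,F_1,\dots,F_{d-1}$ are pairwise distinct, for if $F_i=F_j$ with $i\neq j$ then $F\cap F_i$ would properly contain both $R_i$ and $R_j$, giving $\dim(F\cap F_i)>d-2$, contradicting semisimplicity. \emph{Completeness}: any further facet $G\ni v$ satisfies, by semisimplicity, $\dim(F\cap G)=d-2$; so $F\cap G$ is a ridge of $F$ through $v$, hence equal to some $R_i$, forcing $G=F_i$. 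Thus $v$ lies in exactly $d$ facets and is simple.

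The only real content, once Lemma \ref{lem:facets-intersecting-at-ridges} is available, is the observation that semisimplicity prevents two distinct ridges of $F$ through $v$ from sharing a second common facet. I do not foresee any genuine obstacle: everything reduces to the combinatorial fact that a ridge of $P$ lies in exactly two facets, together with the definition of semisimplicity.
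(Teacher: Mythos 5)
Your proposal is correct and matches the paper's argument: the paper's proof is simply ``immediate from \cref{lem:simpleNonsimpleVertex}'', and your self-contained facet count (a simple vertex of $F$ lies in $d-1$ ridges of $F$, each yielding one further facet, and semisimplicity forces every facet through $v$ to arise this way) is exactly that lemma's proof spelled out, while your one-line alternative via the contrapositive of \cref{lem:facets-intersecting-at-ridges} is an equally valid shortcut within the paper's toolkit. No gaps.
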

\begin{proof} This is immediate from \cref{lem:simpleNonsimpleVertex}.
\end{proof}

In addition to the assumption that every two nondisjoint facets intersect at a ridge, thanks to \cref{prop:CharPSimple}, we can now assume that our nonsimple $d$-polytope $P$ contains a nonsimple facet.

It is almost obvious that the excess degree of any proper face does not exceed the excess degree of the entire polytope. It is useful to know that this inequality is always strict.

\begin{lemma}\label{lem:facet-not-all-excess} The excess of a nonsimple polytope is strictly larger than the excess of any of its facets.
\end{lemma}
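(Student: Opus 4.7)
My plan is to split $\ver P$ into vertices in $F$ and vertices outside $F$, expand
$$\xi(P)-\xi(F) = \sum_{u\in \ver F}\bigl(\deg_P u - \deg_F u - 1\bigr) + \sum_{w\in \ver P\setminus \ver F}\bigl(\deg_P w - d\bigr),$$
and show each summand is non-negative. The second sum is non-negative trivially. For the first, I would argue that every vertex $u$ of $F$ has at least one edge of $P$ incident to it and outside $F$: the edge cone at $u$ in $P$ is $d$-dimensional while the edges of $F$ at $u$ span only a $(d-1)$-dimensional cone. Hence $\deg_P u\geq \deg_F u+1$ and the first sum is also non-negative, giving the weak inequality immediately.

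To upgrade to a strict inequality I would suppose, for a contradiction, that $\xi(P)=\xi(F)$. Then every vertex outside $F$ is simple in $P$ and every vertex $u$ of $F$ satisfies $\deg_P u = \deg_F u + 1$. Because $P$ is nonsimple, a nonsimple vertex $v$ exists and must lie in $F$. If $v$ were simple in $F$, the equation $\deg_P v = \deg_F v + 1 = d$ would contradict the nonsimplicity of $v$ in $P$. So $v$ must be nonsimple in $F$.

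The main obstacle is ruling out this last configuration: $v$ nonsimple in $F$ with a unique outside neighbour $w$, and $w$ simple in $P$. I would dispatch it by computing, in two different ways, the number of facets of $P$ containing the edge $vw$. From $v$'s side, the vertex figure $P/v$ has $\deg_F v + 1$ vertices, exactly one of which (the direction toward $w$) lies off the facet $F/v$ of $P/v$; this forces $P/v$ to be a pyramid over $F/v$ with apex the $w$-direction, so the number of facets of $P/v$ through this apex equals the number of facets of $F/v$, which in turn equals the number of facets of $F$ containing $v$. Since $v$ is nonsimple in the $(d-1)$-polytope $F$, this count is at least $d$. From $w$'s side, simplicity makes $P/w$ a $(d-1)$-simplex, in which any vertex lies in exactly $d-1$ facets. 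Comparing yields $d\leq d-1$, a contradiction, and completes the proof.
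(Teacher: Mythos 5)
Your proof is correct and takes essentially the same route as the paper: establish the weak inequality by the vertexwise decomposition, assume equality, locate a nonsimple vertex of $F$ whose unique neighbour outside $F$ is simple, and reach a contradiction by counting in two ways the facets containing that edge. The only difference is that your final double count (pyramid structure of $P/v$ giving at least $d$ facets through the edge, versus exactly $d-1$ from the simplex vertex figure at $w$) is an inline re-derivation of the paper's \cref{lem:simpleVertexOutside}, which the paper simply cites at that point.
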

\begin{proof} Suppose otherwise. Let $F$ be a facet with equal excess to the polytope. Then, every vertex outside $F$ is simple. Take a nonsimple vertex $u$ in $F$ and neighbour $x$ of $u$ lying in $P\setminus F$. Since $x$ is simple the edge $ux$ is contained in exactly $d-1$ facets, but $u$ must be contained in at least $d+1$ facets. By \cref{lem:simpleVertexOutside}, $u$ has at least two neighbours outside $F$, implying the excess of the polytope is larger than the excess of $F$.
\end{proof}
	
We are now ready to prove our fundamental theorem.

\begin{theorem}[Excess Theorem]\label{thm:excess} Let $P$ be a  $d$-polytope. Then the smallest values in $\Xi(d)$ are 0 and $d-2$. \end{theorem}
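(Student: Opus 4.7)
The plan is to proceed by induction on $d$, showing that any nonsimple $d$-polytope has $\xi(P)\ge d-2$; the value $d-2$ is actually attained, by the $d$-pentasm discussed in Section 2.

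The base case $d=3$ is immediate: if $P$ is nonsimple, some vertex has degree at least $4$, so $\xi(P)\ge 1=d-2$.

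For the inductive step, assume the result for dimension $d-1$, and let $P$ be a nonsimple $d$-polytope. Split into two cases. First, if $P$ is not semisimple, there exist two facets meeting in a face of dimension $j$ with $j\le d-3$. By \cref{lem:Non-DisjointFacets}(ii), $\xi(P)\ge (d-2-j)(j+1)$. Viewed as a function of $j$ on $\{0,1,\ldots,d-3\}$, this quadratic has value exactly $d-2$ at both endpoints $j=0$ and $j=d-3$ and is larger in between, so $\xi(P)\ge d-2$ in this case. Second, if $P$ is semisimple, then by \cref{prop:CharPSimple} some facet $F$ must be nonsimple (otherwise $P$ would be simple). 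Since $F$ is a nonsimple $(d-1)$-polytope, the induction hypothesis gives $\xi(F)\ge d-3$. Then \cref{lem:facet-not-all-excess} yields $\xi(P)>\xi(F)\ge d-3$, and because the excess is a nonnegative integer, $\xi(P)\ge d-2$.

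There is no real obstacle in this argument: every tool has been set up in the preceding sections. The only subtlety worth highlighting is that \cref{lem:facet-not-all-excess} gives strict inequality, which is exactly what upgrades the bound from $d-3$ to $d-2$ in the semisimple case; this is why the dichotomy between the semisimple and non-semisimple cases is the right one to use. Finally, the achievability of both values $0$ and $d-2$ has already been noted in Section 2 (simple polytopes and the $d$-pentasm, respectively), completing the identification of the two smallest elements of $\Xi(d)$.
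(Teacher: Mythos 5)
Your proof is correct and follows essentially the same route as the paper: induction on $d$, with the non-semisimple case dispatched by \cref{lem:Non-DisjointFacets} via the bound $(d-2-j)(j+1)\ge d-2$, and the semisimple case handled by \cref{prop:CharPSimple} plus the induction hypothesis and \cref{lem:facet-not-all-excess}. The only difference is cosmetic: where the paper briefly analyses where the nonsimple vertices can lie before invoking \cref{lem:facet-not-all-excess}, you apply that strict inequality directly and conclude by integrality, which is a valid streamlining of the same argument.
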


\begin{proof}

Proceed by induction  on $d$, with the  base case $d=3$ being easy. A tetrahedron is simple, and a square pyramid has excess one.

If $P$ is a simple polytope we get excess zero, so assume that $P$ is nonsimple. By virtue of  \cref{lem:Non-DisjointFacets}  we can assume that every nondisjoint pair of facets intersect at a ridge, and by \cref{prop:CharPSimple} that there exists a nonsimple facet $F$ with excess at least $d-3$ by the inductive hypothesis.

If there is a nonsimple vertex outside $F$, we are done. If a nonsimple vertex  of $P$ is simple in $F$, we are also at home.  So we can further assume that the facet $F$ contains all the nonsimple vertices of $P$ and that each nonsimple vertex $u$ of $P$ is nonsimple in $F$ and has exactly one neighbour $x$ outside $F$. In this case, the facet $F$ would contain all the excess of the polytope, which is ruled out by \cref{lem:facet-not-all-excess}.
  \end{proof}

The existence, in every dimension, of $d$-polytopes with excess $d-2$ and $d$ has been observed in \S2. The existence of $d$-polytopes with excess $d+2$, e.g. the cyclic polytope with $d+2$ vertices, is also well known. (The characterisation of $d$-polytopes with  $d+2$ vertices \cite[\S6.1]{Gru03} gives many more examples.) In the next section, we will show that polytopes with excess $d-1$ only exist in dimensions 3 and 5.  What about higher values?
It is clear that the excess degree can be any natural number if $d=3$, and any even number if $d=4$.
We can show that the excess degree takes all possible values above $d-2$ if $d=5$ or $6$. We suspect that there are gaps in the possible values of $\xi(P)$ for $d\ge7$. For $d=7$, we can show that the excess degree takes all possible values from 7 onwards, expect perhaps 11.  Apart from the excess $d-1$, we have {so far} been unable to prove the existence of any other such gap above $d-2$. The next Theorem shows that for each dimension, the number of further gaps, if any, is finite.

\begin{lemma}\label{lem:excess-few-gaps-facets} If the $d$-polytope $P$ is a pyramid with base $F$, and $F$ has $v$ vertices, then $\xi(P)=\xi(F)+v-d$.
\end{lemma}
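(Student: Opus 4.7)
The proof is a direct computation from the definitions, so I will lay out the bookkeeping rather than attempt any clever argument.

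First I would record the vertex and edge counts of the pyramid $P$ in terms of those of the base $F$. Writing $a$ for the apex, every vertex of $F$ is joined to $a$ by exactly one edge, and no other edges are created. Hence
\[
f_0(P) = v+1, \qquad f_1(P) = f_1(F) + v.
\]

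Next I would plug these into the definition $\xi(P) = 2 f_1(P) - d\, f_0(P)$, obtaining
\[
\xi(P) = 2 f_1(F) + 2v - d(v+1) = 2f_1(F) - (d-1)v + v - d.
\]
Since $F$ is a $(d-1)$-polytope with $v$ vertices, the definition of excess gives $\xi(F) = 2f_1(F) - (d-1)v$, so substituting yields $\xi(P) = \xi(F) + v - d$, as required.

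There is essentially no obstacle here; the only point that requires the tiniest bit of care is to use the coefficient $d-1$ (not $d$) when writing $\xi(F)$, because $F$ has dimension $d-1$. One could equivalently argue vertex-by-vertex via the identity $\xi(P) = \sum_{u \in \ver P}(\deg u - d)$: each vertex of $F$ picks up one extra edge (to $a$) and its required degree goes up by one, contributing $-1$ per base vertex; meanwhile the apex has degree $v$ and contributes $v-d$. Summing gives $\xi(F) - v + (v-d) + v = \xi(F) + v - d$, which matches.
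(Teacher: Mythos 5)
Your computation is correct, and it is exactly the routine bookkeeping the paper has in mind: the lemma is stated without proof there, being treated as an immediate consequence of $f_0(P)=v+1$, $f_1(P)=f_1(F)+v$ and the fact that $\xi(F)$ is computed with respect to dimension $d-1$. Both your global count and your vertex-by-vertex check are valid, so there is nothing to add.
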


\begin{theorem}\label{thm:excessfewgaps} If $d$ is even, then $\Xi(d)$ contains every even integer in the interval $[d\sqrt d,\infty)$.  If $d$ is odd, then $\Xi(d)$ contains every  integer in the interval $[d\sqrt{2 d},\infty)$.
\end{theorem}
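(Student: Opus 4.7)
My plan is to realise every integer $E$ of the required parity above the stated threshold as the excess of a $d$-polytope, via iterated pyramids over a carefully chosen lower-dimensional base, with \cref{lem:excess-few-gaps-facets} as the main computational tool. A straightforward induction on the number of pyramids, based on that lemma, shows that the $(d-k)$-fold pyramid over a $k$-polytope $Q$ with $v := f_0(Q)$ vertices and excess $\xi(Q)$ is a $d$-polytope with excess
\[
\xi(Q) + (d-k)(v-k-1).
\]

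Given a target $E$ and a base dimension $k$ (to be chosen later), I would parametrise by $u = v-k-1 \ge 0$ and seek a $k$-polytope with $u+k+1$ vertices and excess $\xi(Q) = E - (d-k)u$. Since the maximum excess of a $k$-polytope with $v$ vertices is $v(v-k-1) = u(u+k+1)$, attained by the cyclic polytope for $k \ge 4$, the admissibility conditions $0 \le \xi(Q) \le u(u+k+1)$ become
\[
\tfrac{1}{2}\bigl(-(d+1) + \sqrt{(d+1)^2 + 4E}\bigr) \ \le\ u \ \le\ \tfrac{E}{d-k}.
\]
This interval contains an integer once $E$ exceeds a threshold of order $(d-k)^2$; choosing $k$ of order roughly $\sqrt{d}$ yields the threshold $O(d\sqrt{d})$ claimed in the statement.

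For each admissible integer $u$ in the above interval, I would then need to exhibit a $k$-polytope with the prescribed vertex count and excess. Here I would combine classical existence results for $(f_0,f_1)$-pairs in low dimension (Steinitz's theorem for $k=3$, and the characterisation of admissible pairs for $k \le 5$ given later in this paper) with explicit families such as triplices, pentasms, $C_k$, $\Sigma_k$ and their pyramids, together with stacking operations. Stacking on a simplex facet is especially useful: it adds exactly $k$ to the excess of a $k$-polytope while preserving the existence of a simplex facet, so iterating from a finite collection of base polytopes covers every residue class modulo $k$ within the admissible range.

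Parity will be handled as follows. For $d$ even, $\xi(P)$ is automatically even and the construction produces an even value with no extra effort. For $d$ odd, $\xi(P)$ has the parity of $f_0(P)$, so matching the target parity consumes one extra degree of freedom; this is what inflates the threshold from $d\sqrt{d}$ to $d\sqrt{2d}$. The main obstacle will be the existence step: patching together the above constructions so that every admissible $(v,\xi(Q))$-pair in the relevant region is genuinely realised by some $k$-polytope of dimension roughly $\sqrt{d}$.
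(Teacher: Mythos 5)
Your iterated-pyramid formula (a $(d-k)$-fold pyramid over a $k$-polytope $Q$ with $v$ vertices has excess $\xi(Q)+(d-k)(v-k-1)$, by iterating \cref{lem:excess-few-gaps-facets}) and your parity accounting are both correct, and they are exactly the mechanism the paper uses. The genuine gap is the step you yourself flag as ``the main obstacle'': with $k$ chosen of order $\sqrt d$, you need, for a \emph{prescribed} vertex count $v=u+k+1$, a $k$-polytope whose excess equals a \emph{prescribed} value which can be anywhere up to $u(u+k+1)\approx d$, i.e.\ of order $k^4$ in dimension $k$. That is a full two-parameter $(f_0,\xi)$-realisability statement in a base dimension that grows with $d$. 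Such a statement is available only for $k\le 5$ (Steinitz for $k=3$, Gr\"unbaum's Theorem 10.4.2 for $k=4$, and \S7 of this paper for $k=5$); for $k\ge6$ nothing of the kind is known -- indeed even the one-parameter question of which excess values occur at all is open from dimension $7$ onwards, as noted after \cref{thm:excess}. Your fallback (triplices, pentasms, $C_k$, $\Sigma_k$, stacking on a simplex facet) cannot close this: stacking moves along the single line $(f_0,\xi)\mapsto(f_0+1,\xi+k)$, so finitely many seed families yield only finitely many arithmetic progressions of pairs, whereas for each relevant $v$ you need one specific excess value in an interval of length of order $d$. Nor can you appeal to the theorem itself inductively in dimension $k$, since it gives no control over the number of vertices. (There is also a slip in the arithmetic: the length of your interval for $u$ is roughly $E/(d-k)-\sqrt{E}$, so the ``length at least $1$'' threshold is of order $(d-k)\sqrt d$, not $(d-k)^2$; this does not affect the verdict, but it shows that taking $k\approx\sqrt d$ is not what brings the threshold down to order $d\sqrt d$.)

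The paper avoids this trap by fixing the base dimension at $4$, where Gr\"unbaum's table gives \emph{every} even excess in $[4,v^2-5v]$ for every $v\ge6$, and then compensating for the weaker single-interval estimate by letting $v$ vary: the shifted intervals $[4+(d-4)(v-5),\,v^2-5v+(d-4)(v-5)]$ overlap (within a parity class) as soon as $v^2-5v\gtrsim d$, respectively $\gtrsim 2d$ for odd $d$, i.e.\ $v\gtrsim\sqrt d$ or $\sqrt{2d}$, which is what produces the thresholds $d\sqrt d$ and $d\sqrt{2d}$; a separate construction (stacking over $(d+3)$-vertex polytopes with a simplex facet) handles even $d\le14$, a regime your sketch does not address. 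If you repair your proposal by taking $k=4$ so that the existence step is actually available, you are forced into precisely this covering-by-overlapping-intervals argument, i.e.\ into the paper's proof; as it stands, with $k\approx\sqrt d$, the existence step is unproven and is not a detail but the heart of the matter.
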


\begin{proof}
According to \cite[\S10.4]{Gru03}, for any integer $v\ge6$ and any integer $e$ in the interval $[2v+2,{v\choose2}]$ there is a 4-polytope with $v$ vertices and  $e$ edges. (A stronger assertion can be made if we exclude the values $v=6,7,8,10$, but we do not need it here.) It follows that for any $v\ge6$ and any even $\xi$ in the interval $[4,v^2-5v]$ there is a 4-polytope with $v$ vertices and excess $\xi$. Let us denote by $I(v)$ the collection of all integers in the interval $$[4+(d-4)(v-5),v^2-5v+(d-4)(v-5)]$$ which have the {\bf same parity} as the two endpoints. Applying \cref{lem:excess-few-gaps-facets} to $(d-4)$-fold pyramids, we see that for any $d>4$, $v\ge d+2$ and any  $\xi\in I(v)$, there is a $d$-polytope with $v+d-4$ vertices and excess $\xi$. We now consider three cases, depending on the value of $d$.

First consider the case when $d\ge5$ is odd.  Let $v_0$ be the smallest integer greater than  $\sqrt{2d} +3$; clearly $v_0\ge6$. For any $v\ge v_0$ it is clear that $\Xi(d)$ contains the (parity based) interval $I(v)$. It is easily seen that $v^2-5v\ge(\sqrt{2d} +3)(\sqrt{2d}-2)>2d-6$, so
$4+2(d-4)<v^2-5v+2$, whence $\min I(v+2)<\max I(v)+2$, i.e. the intervals $I(v)$ and $I(v+2)$, which have elements of the same parity, overlap. Consider separately the intervals $I(v_0), I(v_0+2),I(v_0+4)\ldots$, whose elements all have the same parity, and the intervals  $I(v_0+1), I(v_0+3),I(v_0+5)\ldots$ whose elements all have the other parity. Since $\min I(v_0+1)-1\in I(v_0)$, a moment's reflection shows that $\Xi(d)$ contains every integer, both even and odd, from $\min I(v_0+1)-1$ onwards. But $v_0<\sqrt{2d}+4$, and $4\sqrt{2d}>12$, so
$$\min I(v_0+1)-1=3+(d-4)(v_0+1-5)<3+(d-4)\sqrt{2d}
<d\sqrt{2d}-9.
$$

  Thus $\Xi(d)$ contains every  integer greater than   $d\sqrt{2d}-9.$

Now consider the case when $d\ge16$ is even. Let $v_0$ be the smallest integer greater than or equal to $\sqrt d +3$. Then $v_0\ge7$. For any $v\ge v_0$ we have
$$v(v-5)\ge(\sqrt d +3)(\sqrt d -2)=d+\sqrt{d}-6\ge d-2.$$
It follows that $4+(d-4)(v+1-5)\le v^2-5v+(d-4)(v-5)+2$, which means that  $\min I(v+1)\le\max I(v)+2$, i.e. there is no gap between the parity-based intervals $I(v)$ and $I(v+1)$. Each such interval is contained in $\Xi(d)$, which thus contains all even integers in $[4+(d-4)(v_0-5),\infty)$. On the other hand, $v_0<\sqrt d+4$, and $4\sqrt d+d\ge32$, so $$4+(d-4)(v_0-5)<4+(d-4)(\sqrt{d}-1)\le d^{1.5}-24.$$ Thus $\Xi(d)$ contains every even integer greater than or equal to $d^{1.5}-24$.

For lower dimensions, we recall that if $d\ge4$ and $\xi$ is an even number in the interval $[2d-6,2d+6]$, then there is a $d$-polytope $P$ with excess $\xi$ (and $d+3$ vertices). This is a reformulation of \cite[Lemma 14]{PinUgoYos15}; a tiny modification of the proof there shows that $P$ can be chosen to have at least one {(in fact every)} facet being a simplex. Now stacking a vertex on a simplex facet will increase the excess by precisely $d$. Thus we obtain $d$-polytopes  whose excesses are every even number in the intervals $[3d-6,3d+6]$, $[4d-6,4d+6]$, $[5d-6,5d+6]$ etc. If $d\le14$, the union of these intervals is $[2d-6,\infty)$; {clearly $2d\le d^{1.5}$, so} this completes the proof.
\end{proof}

We now note that the excess degree takes all possible values above $d-2$ if $d$ is sufficiently small. In particular, if $d\le6$, the only impossible values of $\Xi(d)$ are those excluded by  \cref{thm:excess} and parity considerations.

In case $d=6, 8$ or 10, the proof just given shows that every even integer from $2d-6$ onwards is the excess degree of some polytope; and the existence of $d$-polytopes with excess $d-2, d$ and $d+2$ has already been noted. It is clear then that every even value above $d-2$  is realised as the excess of some polytope.

In case $d=5$, this proof shows that every  integer from 7 onwards is the excess degree of some polytope; the existence of 5-polytopes with excess 3 or 5 should be clear. The triplex $M_{3,2}$ has excess degree 4, and a 3-fold pyramid over a pentagon has excess degree 6.

\section{Structure of polytopes with small excess}

It is well known (see \cref{thm:shp} below) that any simple polytope is either a simplex or  decomposable. Here we will generalise this, showing that any $d$-polytope with excess strictly less than $d$ is either a pyramid or decomposable. To do this, we first show that such polytopes have a very particular vertex structure. The nonsimple vertices all have the same degree,  and if $d\ge4$, they always form a face. We begin with some background material about decomposability.

\subsection{Decomposability of polytopes}

Recall from the introduction that a polytope $P$ is {\it (Minkowski) decomposable} if it can be written as the Minkowski sum of two polytopes, neither of which is homothetic to $P$; otherwise it is {\it indecomposable}. All polygons other than triangles are decomposable; this topic becomes more serious when $d\ge3$. We generally do not distinguish between polytopes which are  combinatorially equivalent, i.e. have isomorphic face lattices. However, a polytope can have one (geometric) realisation which is decomposable  and another realisation which is indecomposable. Smilansky \cite[p. 43]{Smi87} calls a polytope $P$ {\it combinatorially decomposable (resp. indecomposable)} if whenever $Q$ is combinatorially equivalent to $P$, then $Q$ is also decomposable (resp. indecomposable); otherwise $P$ is called {\it conditionally decomposable}. In this paper we only come across polytopes which are combinatorially decomposable, just called decomposable henceforth, or combinatorially  indecomposable polytopes, just called indecomposable henceforth. But it is important to be aware that conditionally decomposable polytopes do exist; see \cite[\S5]{Kal82}, \cite[Fig. 1]{Smi87} or \cite[Example 11]{PrzYos08} for further discussion.

A major tool for establishing decomposability is the following concept. We will say that a facet $F$ in a polytope $P$ has {\bf Shephard's property} if every vertex in $F$ has a unique neighbour outside $F$. This is not an intrinsic property of $F$, but rather of the way that $F$ sits inside $P$. This concept appears implicitly in  \cite{She63}, and leads to the following result.

\begin{theorem}\label{thm:shp} A polytope $P$ is decomposable whenever there is a facet $F$ with Shephard's property, and $P$ has at least two vertices outside $F$. In other words, if a polytope $P$ has  a facet $F$ with Shephard's property, then  it is either decomposable or a pyramid with base $F$.
\end{theorem}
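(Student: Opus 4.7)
We split the argument according to whether $P$ has exactly one or at least two vertices outside $F$.

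\emph{Case 1: $|\ver P\setminus\ver F|=1$.} Write $v^*$ for the unique vertex of $P$ outside $F$. Then $\ver P=\ver F\cup\{v^*\}$, so $P=\mathrm{conv}(\ver F\cup\{v^*\})$ is the pyramid with base $F$ and apex $v^*$.

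\emph{Case 2: $|\ver P\setminus\ver F|\ge 2$.} Here we exhibit an explicit Minkowski decomposition $P=Q+R$, following Shephard \cite{She63}. Choose coordinates so $F$ lies in the hyperplane $\{x_d=0\}$ with $P\subseteq\{x_d\le 0\}$, and let $\phi(v)$ denote the unique outside neighbour of each $v\in\ver F$. For $\epsilon>0$ sufficiently small, propose
\[
Q:=\epsilon F\qquad\text{and}\qquad R:=\{x\in\mathbb{R}^d:\;x+\epsilon F\subseteq P\},
\]
where $\epsilon F$ is a scaled flat $(d-1)$-polytope inside the hyperplane of $F$, and $R$ is the Minkowski difference.

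The main step is verifying $P=Q+R$. The inclusion $Q+R\subseteq P$ is immediate from the definition of $R$; the reverse inclusion is the statement that $\epsilon F$ is a Minkowski summand of $P$, equivalently, that the normal fan of $P$ refines the normal fan of $\epsilon F$. Shephard's property enters here: at each $v\in\ver F$, the uniqueness of the outgoing neighbour $\phi(v)$ forces the other facets of $P$ through $v$ to behave compatibly with a thin ``collar'' over $F$, so that the normal cone $N_P(v)$ sits inside the cylindrical cone $N_{\epsilon F}(v)=N_F^{\mathrm{hor}}(v)+\mathbb{R}\,n_F$, where $n_F$ is the outward normal to $F$ and $N_F^{\mathrm{hor}}(v)$ is the normal cone of $v$ within the affine hull of $F$. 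For vertices of $P$ not on $F$, the refinement is automatic, since every such normal cone lies in the halfspace $\{u:\langle u,n_F\rangle\le 0\}$, which is a single cone of the normal fan of $\epsilon F$. Making this refinement rigorous, particularly the nesting at $F$-vertices, is the main technical obstacle and is the core of Shephard's original argument.

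Granted $P=Q+R$, it remains to note that neither summand is homothetic to $P$. The summand $Q=\epsilon F$ has dimension $d-1<d$, so it cannot be homothetic to the $d$-dimensional $P$. If $R=\lambda P+t$ were homothetic to $P$ with some $\lambda>0$, then the support-function identity $h_Q=h_P-h_R=(1-\lambda)h_P-\langle t,\cdot\rangle$ would force $Q$ either to be a positive homothet of $P$ (when $0<\lambda<1$, contradicting the dimension count), to be a single point (when $\lambda=1$, impossible since $F$ has positive dimension), or to fail to be a polytope at all (when $\lambda>1$, since $(1-\lambda)h_P$ is not sublinear when $P$ is not a point). Hence $P=Q+R$ is a genuine Minkowski decomposition and $P$ is decomposable.
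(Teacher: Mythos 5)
Your Case 1 is fine, but Case 2 rests on a false claim: a small homothet $\epsilon F$ of the facet is in general \emph{not} a Minkowski summand of $P$, even when every vertex of $F$ is simple in $P$. Take the triangular prism $P=\{(x,y,z): x\ge 0,\ y\ge 0,\ x+y\le 1,\ 0\le z\le 1\}$ and let $F$ be the quadrilateral facet $\{y=0\}$; every vertex of $F$ has a unique neighbour outside $F$, so Shephard's property holds. The normal fan of $\epsilon F$ has as its maximal cones the four ``cylinders'' obtained as products of the planar normal cones of $F$ with the line $\mathbb{R}e_y$, e.g.\ $\{u: u_x\le 0,\ u_z\le 0\}$. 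But the normal cone of $P$ at the outside vertex $(0,1,0)$ is generated by $-e_x$, $e_x+e_y$ and $-e_z$, so it meets both open halfspaces $\{u_x>0\}$ and $\{u_x<0\}$ and is contained in no single cone of the fan of $\epsilon F$. Hence the normal fan of $P$ does not refine that of $\epsilon F$, $\epsilon F$ is not a summand, and $Q+R\subsetneq P$ for your choice of $Q$ and $R$. (Equivalently: up to translation the summands of the prism are exactly $aT+bS$ with $T$ the triangle and $S$ the vertical segment, and no such sum is a quadrilateral.) The precise point of failure is your sentence asserting that for vertices outside $F$ the refinement is ``automatic'' because their normal cones lie in the halfspace $\{u:\langle u,n_F\rangle\le 0\}$: that halfspace is \emph{not} a cone of the normal fan of $\epsilon F$; the fan of a $(d-1)$-dimensional polytope subdivides both halfspaces. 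Ironically, the containment you single out as the technical core — $N_P(v)\subseteq N_F^{\mathrm{hor}}(v)+\mathbb{R}n_F$ at vertices $v\in F$ — holds automatically for any facet (project facet normals into $\mathrm{aff}(F)$), with or without Shephard's property; all the difficulty, and the counterexample, lives at the vertices outside $F$, exactly where you waved it away.

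For comparison, the paper does not reprove the theorem; it cites Shephard \cite{She63} and \cite{PrzYos16}, and in those arguments the summand is \emph{not} a homothet of $F$. Roughly, one slides each vertex $v$ of $F$ a small parameter $t$ along its unique outgoing edge towards $\phi(v)$, takes $Q_t$ to be the convex hull of these moved points together with the vertices outside $F$, and proves (this is the real work, via support functions or Kallay-type criteria) that $Q_t$ is a summand of $P$ for small $t$; Shephard's property is what makes the moved points well defined and the deformation consistent. In the prism example the resulting complementary summand is a small \emph{triangle}, not a small square — which is exactly why your choice of $\epsilon F$ cannot be right. As written, Case 2 is not merely incomplete (you defer the key refinement step to ``Shephard's original argument''); the step you defer is false for your chosen $Q$, so the proof needs a different summand, not just more detail.
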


This result was essentially proved by Shephard \cite[Result (15)]{She63}. He made the stronger assumption that every vertex in $F$ is simple in $P$, but the general statement does follow from his proof. Another proof   appears in \cite[Prop.~5]{PrzYos16}. We will say that a polytope is a {\it Shephard polytope} if it has at least one facet with Shephard's property. It is not hard to see that a polytope is simple if, and only if,  every facet has Shephard's property.

The following related concept will also be useful. Let us say that a facet $F$ in a polytope $P$ has {\bf Kirkman's property} if every other facet intersects $F$ at a ridge. Again this is not an intrinsic property of $F$ alone. Lee \cite[p. 110]{Lee81} calls a {\it Kirkman polytope} any polytope in which at least one facet has Kirkman's property; Klee \cite[p. 2]{Kle74} had earlier made an equivalent definition for simple polytopes. The generalisation  to non-simple polytopes in \cite{Lee81} is quite natural. Following Klee, we call a {\it super-Kirkman polytope} any polytope in which {\it every} facet has Kirkman's property. This is equivalent to the dual polytope being 2-neighbourly.

Numerous examples (e.g. the cube) show that Shephard's property does not imply Kirkman's property. In the other direction, if $P$ is a pyramid over $\Delta_{m,n}$, where $m,n\ge2$, then $P$ is a super-Kirkman polytope, but only one facet has Shephard's property.

As a common weakening of both properties, we will also say that a facet $F$ in a polytope $P$ has the {\bf weak Kirkman-Shephard property} if every other facet  is either disjoint from $F$, or intersects $F$ at a ridge.Obviously a polytope is{ semisimple} if every facet has the weak Kirkman-Shephard property. The following easy result will be useful later.

\begin{lemma}\label{lem:wksp} Let $P$ be a polytope, and $F$ a facet with the weak Kirkman-Shephard property. Then every vertex which is simple in $F$ is also simple in $P$.
\end{lemma}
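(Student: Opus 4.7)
The plan is to reduce this directly to \cref{lem:simpleNonsimpleVertex}, which says that if $v$ is simple in a facet $F$ and every facet containing $v$ meets $F$ in a ridge, then $v$ is simple in $P$. So the whole task is to verify the hypothesis of that earlier lemma from the weak Kirkman--Shephard property.

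First I would take an arbitrary facet $G \neq F$ of $P$ containing $v$. Since $v \in F \cap G$, the facets $F$ and $G$ are not disjoint. The weak Kirkman--Shephard property for $F$ therefore forces $F \cap G$ to be a ridge of $P$ (it cannot be the disjoint case). Since this holds for every such $G$, each facet of $P$ through $v$ other than $F$ meets $F$ in a ridge, which is exactly the hypothesis of \cref{lem:simpleNonsimpleVertex}. Invoking that lemma with the assumption that $v$ is simple in $F$ yields that $v$ is simple in $P$.

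There is essentially no obstacle here; the lemma is almost a restatement of \cref{lem:simpleNonsimpleVertex} once one observes that the only way the weak Kirkman--Shephard property can fail to give the ``ridge'' alternative is via disjointness, which is ruled out by the common vertex $v$. The proof can be presented in just a few lines.
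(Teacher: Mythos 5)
Your proposal is correct and matches the paper's argument: the paper also proves this by direct appeal to \cref{lem:simpleNonsimpleVertex}, with the observation that any facet through the vertex meets $F$ and hence, by the weak Kirkman--Shephard property, meets it in a ridge. You have simply written out the one-line verification that the paper calls immediate.
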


\begin{proof}
This is immediate from \cref{lem:simpleNonsimpleVertex}.
\end{proof}

Turning now to indecomposability, a much bigger toolkit of sufficient conditions is available. Rather than giving a detailed survey of this topic, we will just summarize the results we need. The main concept we need is due to Kallay \cite{Kal82}. He defined a {\it geometric graph} as any graph $G$ whose vertex set $V$ is a subset of $\r^d$ for some $d$, and whose edge set $E$ is a subset of the line segments joining members of $V$. He then defined a notion of decomposability for geometric graphs, and showed \cite[Thm.~1a]{Kal82} that a polytope is decomposable if, and only if, its edge graph (or skeleton) is decomposable in his sense. The study of geometric graphs is thus a useful tool for establishing indecomposability. For a detailed proof of the next result, see {\cite{PrzYos08}} and the references therein.

\begin{theorem}\label{thm:McMullen-Decomp}
\begin{enumerate}[(i)]

\item If the vertices of a geometric graph are affinely independent, and its edges form a cycle, then the graph is indecomposable.

\item If $G_1$ and $G_2$ are two indecomposable geometric graphs with (at least) two vertices in common, then the graph $G_1\cup G_2$ is also indecomposable.

\item If $P$ is a polytope, $G$ is a subgraph of the skeleton of $P$, $G$ contains at least one vertex from every facet of $P$, and $G$ is an indecomposable graph, then $P$ is an indecomposable polytope.
    \item Conversely, if $P$ is an indecomposable polytope, then its skeleton $G$ is an indecomposable geometric graph.
\item If $G_0 = (V_0,E_0)$ is an indecomposable geometric graph, $G_1 = (V_1,E_1)$ is another geometric graph with one more vertex and two more edges, i.e. there is a unique $v\in V_1 \setminus V_0$, and distinct vertices $u$ and $w$ in $V_0$,
such that $E_1 = E_0 \cup\{[u, v],[v,w]\}$, then $G_1$ is also indecomposable.
\end{enumerate}
\end{theorem}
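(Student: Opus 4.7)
The unifying framework for all five parts is the following: a summand of a geometric graph $G=(V,E)$ is a vertex map $\phi\colon V\to\mathbb{R}^d$ satisfying, for every edge $[u,v]\in E$, an identity $\phi(u)-\phi(v)=\lambda_{uv}(u-v)$ with $\lambda_{uv}\ge 0$; $G$ is indecomposable precisely when every such $\phi$ is trivial, meaning that all $\lambda_{uv}$ coincide with a single $\mu\ge 0$, so that $\phi$ is either constant or a global homothet of the identity. My plan is to verify each part within this framework, and then handle the hardest passage (iv) separately.

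For (i), I would sum the edge relations around the cycle $v_1,\ldots,v_n,v_1$ to obtain $\sum_{i=1}^{n}\lambda_i(v_{i+1}-v_i)=0$; affine independence of the $v_i$ forces the edge vectors $v_{i+1}-v_i$ to admit only the tautological linear dependence $\sum_i(v_{i+1}-v_i)=0$, so all $\lambda_i$ coincide. For (ii), any summand $\phi$ of $G_1\cup G_2$ restricts to a trivial summand on each $G_i$ with common scalar $\mu_i$; the equation $\mu_1(y-x)=\phi(y)-\phi(x)=\mu_2(y-x)$ at the two shared vertices $x,y$ forces $\mu_1=\mu_2$, so $\phi$ is globally trivial. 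For (v), the restriction of $\phi$ to $G_0$ is trivial with some scalar $\mu$, so in particular $\phi(w)-\phi(u)=\mu(w-u)$. Combining this with the two new edge relations and rearranging yields $(\lambda_1-\mu)(u-v)+(\mu-\lambda_2)(w-v)=0$, which under the affine independence of $u,v,w$ (an implicit standing assumption, automatic when the graph sits inside the skeleton of a polytope) gives $\lambda_1=\lambda_2=\mu$.

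For (iii), I would translate a Minkowski decomposition $P=Q+R$ with $Q$ not homothetic to $P$ into a non-homothetic vertex map $\phi$ sending each vertex of $P$ to the corresponding vertex of $Q$; this $\phi$ is a summand of the skeleton in the sense above. Its restriction to $G$ is a summand of $G$, which by hypothesis is trivial, with some common scalar $\mu$. Since every facet $F$ of $P$ contains a vertex of $G$, the scalar $\mu$ propagates from that vertex through the edges of $F$ to all vertices of $F$, forcing the $Q$-component of every vertex of $P$ to be $\mu$ times its original position (up to translation). But then $Q$ is homothetic to $P$, contradicting nontriviality of the decomposition.

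Part (iv) is the deepest and will be the main obstacle: one must promote a nontrivial geometric-graph summand of the skeleton of $P$ to a genuine Minkowski summand of $P$, i.e.\ show that the prescribed vertex images really assemble into a polytope $Q$ with $P=Q+R$ for some polytope $R$. This is Kallay's theorem; the difficulty is to verify that on each facet the edge data determines a consistent (lower-dimensional) polytope, and that these summand facets glue correctly along ridges. I would argue facet by facet, by induction on dimension: within a facet $F$ the induced summand, combined with the fact that $F$ is affinely spanned by its vertices, yields a polytope $Q_F$ summand of $F$; compatibility across a shared ridge is a consequence of both $Q_F$ and $Q_{F'}$ inducing the same vertex images on the ridge, and the global $Q$ is then obtained by intersecting the half-spaces supporting the $Q_F$. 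All remaining parts of the theorem follow from the framework above; the serious work is concentrated in (iv).
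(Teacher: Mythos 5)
The paper does not actually prove this theorem; it refers the reader to \cite{PrzYos08} and, ultimately, to Kallay \cite{Kal82}, and your framework of decomposing functions ($\phi(u)-\phi(v)=\lambda_{uv}(u-v)$, $\lambda_{uv}\ge0$, with indecomposability meaning every such $\phi$ agrees with some homothety $x\mapsto\mu x+t$ on all of $V$) is precisely the one used there. Within that framework your arguments for (i), (ii) and (v) are correct and standard, with two small caveats. First, ``trivial'' should be defined as agreement with a homothety on all of $V$, not merely ``all $\lambda_{uv}$ coincide''; it is the homothety form that you actually invoke in (ii) at the two shared vertices, which need not be adjacent. Second, in (v) the non-collinearity of $u,v,w$ is genuinely needed (if they are collinear, $\phi$ can fix $V_0$ and slide $\phi(v)$ along the line, giving a non-homothetic decomposing function); you are right that this is automatic when $[u,v]$ and $[v,w]$ are edges of a polytope, which covers every application in the paper.

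The genuine gaps are in (iii) and (iv). In (iii), the step ``the scalar $\mu$ propagates from that vertex through the edges of $F$ to all vertices of $F$'' is unjustified, and false as a mechanism: the scalars on the edges of a facet need not equal $\mu$, since facets of $P$ may themselves be decomposable (in a triangular prism with summand the base triangle, the vertical edges carry $\lambda=0$ while the base edges carry $\lambda=1$). The correct route is via support functions: the normal fan of $P$ refines that of any summand $Q$, so $h_Q$ is linear on each normal cone of $P$ and $Q$ is determined by its values $h_Q(u_F)$ at the facet normals $u_F$ of $P$; knowing $q(x)=\mu x+t$ for just one vertex $x\in G\cap F$ of each facet gives $h_Q(u_F)=\mu h_P(u_F)+\langle u_F,t\rangle$, whence $Q=\mu P+t$ and the decomposition is trivial. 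For (iv), what you offer is only a plan: this is exactly the hard half of Kallay's theorem, and the facet-by-facet induction with ``glue along ridges and intersect half-spaces'' leaves the crux untouched, namely showing that the locally defined summand data is globally consistent and assembles into a convex polytope $Q$ with $P=Q+R$. As written, parts (iii) and (iv) of your proposal are not proofs; for (iv) in particular you would have to cite \cite{Kal82} or \cite{PrzYos08}, which is what the paper itself does.
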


Note in part (v) that no assumption is made about whether $[u,w]$ is an edge of either graph. A Hamiltonian cycle through the vertices of a simplex is an affinely independent cycle; this is one way to see that a simplex is indecomposable. Application of part (ii) then shows that any simplicial polytope is indecomposable. In particular, the dual of any simple polytope must be indecomposable. The corollary below generalises that. It depends on the following result, which seems to be a basic and fundamental result on decomposability, but we could not find a reference.

\begin{theorem}\label{thm:d-1DecomposableFacets} Let $P$ be a $d$-polytope with strictly less than $d$ decomposable facets. Then $P$ is indecomposable.
\end {theorem}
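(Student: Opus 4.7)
The plan is to construct an indecomposable subgraph of the skeleton of $P$ that contains at least one vertex from every facet, and then invoke \cref{thm:McMullen-Decomp}(iii). Let $\mathcal{I}$ and $\mathcal{D}$ denote the families of indecomposable and decomposable facets of $P$ respectively; by hypothesis $|\mathcal{D}|\le d-1$, and since $P$ has at least $d+1$ facets we have $|\mathcal{I}|\ge 2$. By \cref{thm:McMullen-Decomp}(iv), the skeleton $G(F)$ of every $F\in\mathcal{I}$ is already an indecomposable geometric graph.

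The key step is to enumerate $\mathcal{I}=\{F_1,\ldots,F_k\}$ in such a way that each $F_i$ (with $i\ge 2$) shares a ridge with some $F_j$, $j<i$. To obtain this I would pass to the dual polytope $P^*$: two facets of $P$ share a ridge precisely when the corresponding vertices of $P^*$ are adjacent in its skeleton. By Balinski's theorem \cite[Thm.~3.14]{Zie95}, the skeleton of the $d$-polytope $P^*$ is $d$-connected, and hence deleting the at most $d-1$ vertices corresponding to $\mathcal{D}$ leaves a connected graph on $\mathcal{I}$. Any ordering compatible with a spanning tree of this graph supplies the desired enumeration.

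Set $H_1:=G(F_1)$ and inductively $H_i:=H_{i-1}\cup G(F_i)$. Since a ridge is a $(d-2)$-polytope, it contains at least $d-1\ge 2$ vertices, so $G(F_i)$ and $H_{i-1}$ share at least two vertices. Both graphs are indecomposable, so \cref{thm:McMullen-Decomp}(ii) gives the indecomposability of $H_i$; iterating, $H_k$ is indecomposable. Moreover, any vertex $v$ of $P$ lies in at least $d$ facets, at most $d-1$ of which can be in $\mathcal{D}$, so $v$ belongs to some $F_i\in\mathcal{I}$. Hence $H_k$ contains every vertex of $P$, and in particular meets every facet, so \cref{thm:McMullen-Decomp}(iii) yields the indecomposability of $P$.

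The only substantive step is the ridge-connectivity claim in the second paragraph, whose natural route runs through the dual polytope and Balinski's theorem; the remainder is a straightforward assembly of parts (ii), (iii) and (iv) of \cref{thm:McMullen-Decomp}. I do not foresee additional obstacles, though one should double-check the edge case $|\mathcal{D}|=d-1$ to ensure that Balinski's theorem is applicable with its sharp bound.
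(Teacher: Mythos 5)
Your proposal is correct and follows essentially the same route as the paper: pass to the dual, apply Balinski's theorem to the at most $d-1$ vertices corresponding to decomposable facets, chain the indecomposable facets together through shared ridges using \cref{thm:McMullen-Decomp}(ii) and (iv), and conclude with part (iii). Your observation that every vertex lies in at least $d$ facets (so the union of indecomposable facets contains all vertices) is a slightly cleaner way to see that the graph meets every facet than the paper's count of facet--facet intersections, but it is the same argument in substance.
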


\begin{proof} Let $n$ denote the number of decomposable facets of $P$, with $n<d$. Consider the corresponding collection of vertices in the dual polytope $P^*$. Thanks to Balinski's Theorem again, their removal does not disconnect $P^*$, so all the other vertices (which correspond to indecomposable facets of $P$) can be ordered into a sequence, possibly with repetition, so that any successive pair defines an edge of $P^*$.

This means that the indecomposable facets of $P$ form a strongly connected family, in the sense that each successive pair intersects in a ridge. This is much more than we need; intersecting in an edge gives us a suitably connected family of indecomposable faces. The union of this family is clearly an indecomposable graph. It touches every facet because there are only $n$ other facets, and any facet intersects at least $d$ other facets. Hence \cref{thm:McMullen-Decomp} gives the conclusion.
\end{proof}

Furthermore, this is best possible: there are decomposable $d$-polytopes with precisely $d$ decomposable facets, namely $d$-prisms and capped $d$-prisms.

\begin{corollary}\label{cor:DualsExcessD-2-D-1-Indecomp} Let $P$ be $d$-polytope at most $d-1$ nonsimple vertices.  Then the dual polytope is indecomposable.
\end{corollary}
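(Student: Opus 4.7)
The plan is to apply \cref{thm:d-1DecomposableFacets} to the dual polytope $P^*$. It therefore suffices to show that $P^*$ has strictly fewer than $d$ decomposable facets, and then the theorem delivers the conclusion immediately.

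Under polar duality, the facets of $P^*$ are in bijection with the vertices of $P$: the facet $v^*$ of $P^*$ corresponding to a vertex $v$ of $P$ is combinatorially isomorphic to the dual of the vertex figure of $P$ at $v$. The key observation I will use is that whenever $v$ is simple in $P$, the vertex figure at $v$ is a $(d-1)$-simplex, so its dual is again a $(d-1)$-simplex, and hence $v^*$ is a simplex. Simplices are indecomposable (this is noted right after \cref{thm:McMullen-Decomp}, since a Hamiltonian cycle through the vertices of a simplex is an affinely independent cycle). Thus every facet of $P^*$ associated to a simple vertex of $P$ is indecomposable.

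Consequently, the only facets of $P^*$ that may possibly be decomposable are those arising from nonsimple vertices of $P$. By hypothesis $P$ has at most $d-1$ nonsimple vertices, so $P^*$ has at most $d-1$ decomposable facets. Invoking \cref{thm:d-1DecomposableFacets} then yields that $P^*$ is indecomposable, as required.

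There is no real obstacle here: the statement is essentially just \cref{thm:d-1DecomposableFacets} translated across the vertex/facet duality, and the only extra ingredient is the standard fact that the vertex figure at a simple vertex is a simplex.
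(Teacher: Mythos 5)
Your proposal is correct and matches the paper's proof: the paper also applies \cref{thm:d-1DecomposableFacets} to the dual, noting that duals of $d$-polytopes with at most $d-1$ nonsimple vertices have at most $d-1$ decomposable facets. Your write-up merely spells out the duality argument (facets of $P^*$ at simple vertices of $P$ are simplices, hence indecomposable) that the paper leaves implicit.
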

\begin{proof}
\cref{thm:d-1DecomposableFacets} gives the result at once,  as duals of $d$-polytopes with at most $d-1$ nonsimple vertices  have at most $d-1$ decomposable facets.
\end{proof}

This is also best possible. The bipyramid over a $(d-1)$-simplex has exactly $d$ nonsimple vertices, but its dual, the prism,  is decomposable.

\subsection{Polytopes with small excess}

\cref{cor:DualsExcessD-2-D-1-Indecomp} shows the indecomposability of the duals of $d$-polytopes with at most $d-1$ nonsimple vertices.
In this section, we establish stronger conclusions for polytopes with small excess, namely $\xi=d-2$ or $d-1$. We show that this imposes strong restrictions on the distribution of nonsimple vertices. A $d$-polytope with excess $d-2$ either has a single vertex with excess $d-2$, or $d-2$ vertices each with excess one; in the latter case, the nonsimple vertices form a $(d-3)$-face. A polytope can have excess $d-1$ only if $d=3$ or 5, and cannot have three nonsimple vertices. Thus there is at least one further gap in the possible values of the excess degree, from dimension 7 onwards.

We begin by examining the intersection patterns of facets; these are also severely restricted by the assumption of low excess. With the exception of ten 3-dimensional polyhedra, every $d$-polytope with excess $d-2$ or $d-1$ is a Shephard polytope. Nine of these ten turn out to be (Minkowski) decomposable. Thus every polytope with low excess is either decomposable or a pyramid or $\TA$ (see \cref{fig:polytopes}(d)); and we will see that in all cases their duals are indecomposable. Of course a polytope with excess zero is either decomposable or a simplex.

Thus the excess theorem is highly applicable to questions about the decomposability of polytopes. The results just mentioned are best possible since there are $d$-polytopes with excess $d$ which are decomposable and $d$-polytopes that are indecomposable. For instance, the capped prism $CP_{d,d}$ is decomposable and self-dual, a bipyramid over a $(d-1)$-simplex is indecomposable with decomposable dual, and the  capped prisms $CP_{k,d}$ for $k<d$ are decomposable with indecomposable duals. For $d=3$, all polyhedra with seven vertices and seven faces have excess $d$; with the exception of the capped prism, they are all indecomposable with indecomposable duals.

For simplicity of expression, we will sometimes simply state the conclusion that we have a Shephard polytope; bear in mind that \cref{thm:shp} then guarantees that the polytope is either decomposable or a pyramid.

We begin by making explicit the following easy consequence of \cref{lem:Non-DisjointFacets}.

\begin{lemma}\label{lem:facetintersection}
Let $F$ and $G$ be distinct nondisjoint facets of a polytope $P$ and set $j=\dim(F\cap G)$. If $\xi(P)=d-2$, then $j$ is either 0,  $d-3$ or $d-2$. If $\xi(P)=d-1$, then either $j=0, d-3$ or $d-2$; or $j=1$ and $d=5$. In case $j\ne d-2$, $F\cap G$ is either a simplex or a quadrilateral.
\end{lemma}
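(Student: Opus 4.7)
The plan is to extract both parts of the statement from \cref{lem:Non-DisjointFacets}(ii), together with a sharper version of its inequality obtained by summing part (i) over all vertices of $F\cap G$:
\[
\xi(P)\ \ge\ (d-2-j)\,f_0(F\cap G)\ \ge\ (d-2-j)(j+1).
\]

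To determine the admissible values of $j$, I would rewrite the weaker bound as $j((d-3)-j)\le\xi(P)-(d-2)$ and solve it over the integers in $\{0,\ldots,d-2\}$. For $\xi(P)=d-2$ the right-hand side is $0$, so $j=0$ or $j\ge d-3$, yielding $j\in\{0,d-3,d-2\}$. For $\xi(P)=d-1$ we instead need $j^2-(d-3)j+1\ge 0$; its discriminant $(d-3)^2-4$ is non-positive for $d\le 5$ and positive for $d\ge 6$. When $d\ge 6$ the two roots lie in $(0,1)$ and $(d-4,d-3)$, leaving only $j\in\{0,d-3,d-2\}$; when $d=5$ the repeated root is $j=1$, which is the unique extra admissible value; and when $d\le 4$ the set $\{0,d-3,d-2\}$ already covers $\{0,\ldots,d-2\}$.

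For the geometric classification when $j\ne d-2$, I would treat each admissible value in turn. The values $j=0$ and $j=1$ (the latter only for $d=5$) make $F\cap G$ a single vertex and an edge respectively, both simplices. For the remaining value $j=d-3$, the sharper bound reads $f_0(F\cap G)\le\xi(P)$; combined with the obvious $f_0(F\cap G)\ge d-2$, this leaves $f_0(F\cap G)\in\{d-2,d-1\}$, and the value $d-2$ immediately forces a $(d-3)$-simplex. So the crux of the proof will be the residual subcase $\xi(P)=d-1$, $f_0(F\cap G)=d-1$.

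The hard part is to dispatch this last subcase. Since all the intermediate inequalities are tight, I would argue that every vertex of $F\cap G$ has excess exactly one in $P$, that every other vertex of $P$ is simple, and that decomposing $\deg u=d+1$ for $u\in F\cap G$ into its contributions from $F\cap G$, $F\setminus G$, $G\setminus F$, and the exterior of $F\cup G$ forces $u$ to have degree exactly $d-3$ in $F\cap G$ with no exterior neighbour. Thus $F\cap G$ is a simple $(d-3)$-polytope on $d-1$ vertices. For $d\ge 6$ one has $d-1<2(d-3)$, so \cref{lem:CountSimpleP}(i) would force $F\cap G$ to be the $(d-3)$-simplex, contradicting $f_0(F\cap G)=d-1>d-2$. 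For $d\le 4$ the count $d-1$ is incompatible with $\dim(F\cap G)=d-3$. Only $d=5$ survives, where $F\cap G$ is a simple polygon on four vertices, i.e.\ a quadrilateral.
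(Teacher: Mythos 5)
Your proposal is correct and follows essentially the same route as the paper's proof: both rest on the inequality $(j+1)(d-2-j)\le f_0(F\cap G)(d-2-j)\le \xi(P)$ from \cref{lem:Non-DisjointFacets}, reduce it to $j(d-3-j)\le \xi(P)-(d-2)$ to pin down $j$, and then in the residual case $j=d-3$, $f_0(F\cap G)=d-1$ use tightness of all the degree bounds to conclude that $F\cap G$ is simple, which forces $d=5$ and a quadrilateral. Your discriminant analysis and the explicit appeal to \cref{lem:CountSimpleP}(i) merely spell out steps the paper treats more tersely ("this is only possible if $j=2$"), so there is no substantive difference.
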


\begin{proof}
Let $k$ be the number of vertices in $F\cap G$; then \cref{lem:Non-DisjointFacets} informs us that $(j+1)(d-2-j)\le k(d-2-j)\le\xi(P) \le d-1$.  Clearly $j=d-2$ is one possibility. Henceforth, it is enough consider only the case $j\le d-3$.

The inequality $(j+1)(d-j-2)\le d-1$ is equivalent to $j(d-3-j)\le1$. The only solutions for this are $j=0$, $j=d-3$ and $j=1=d-3$. Clearly
$F\cap G$ is a simplex if $j=0$ or 1.

If $j=d-3$, our original inequality becomes $k\le\xi(P)$. If $\xi(P)=d-2$, then $k\le j+1$ and again $F\cap G$ is a simplex. In  case $\xi(P)=d-1$, it is also possible that $k=j+2$. But then the contribution to the excess of the edges leaving $F\cap G$ is already $d-1$, meaning that $F\cap G$ must be simple. This is only possible if $j=2$, and then $k=4$ and $d=5$.
\end{proof}

This result enables us   to study the structure of  polytopes with excess degree $d-2$. For 3-dimensional polyhedra this is quite simple. Any such polyhedron has a unique nonsimple vertex, which has degree four, and so the polyhedron is either decomposable, or a quadrilateral pyramid. We now proceed to higher dimensions.

\begin{lemma}\label{lem:Excessd-2Facets0}  Let $P$ be $d$-polytope with excess degree $d-2$.  If $P$ has a unique nonsimple vertex (necessarily with excess $d-2$), then it has two nondisjoint facets intersecting at just this vertex. Conversely, if $F_1$ and $F_2$ are any two nondisjoint facets of $P$ such that $\dim F_1\cap F_2= 0$, then this vertex is the only nonsimple vertex (with excess $d-2$), and consequently $P$ is a Shephard polytope. Furthermore, $P$ has a  facet with excess $d-3$.
\end{lemma}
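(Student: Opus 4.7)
I would prove all three assertions through the vertex figure $P/v$ at the (candidate or established) unique nonsimple vertex $v$. Once $v$ is the only nonsimple vertex of $P$, all of its neighbours are simple, so \cref{lem:vertex-all-neighbours-simple} makes $P/v$ a simple $(d-1)$-polytope; since $v$ has excess $d-2$ it has degree $2d-2$, and hence $P/v$ has exactly $2(d-1)$ vertices. For $d\ge 4$, \cref{lem:CountSimpleP}(ii) identifies $P/v$ as the simplicial $(d-1)$-prism; for $d=3$ it is simply a quadrilateral. The two simplex bases of this prism (or a pair of non-adjacent edges in the quadrilateral, when $d=3$) are disjoint facets of $P/v$, and under the standard bijection between faces of $P$ through $v$ and faces of $P/v$ they pull back to two facets $F_1,F_2$ of $P$ with $F_1\cap F_2=\{v\}$, which proves the forward direction.

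For the converse, let $v:=F_1\cap F_2$. \cref{lem:Non-DisjointFacets}(i) with $j=0$ gives $v$ excess at least $d-2$, so the total excess $d-2$ is concentrated at $v$, forcing $v$ to be the unique nonsimple vertex. To establish the Shephard property I would pick a facet $F$ of $P$ not containing $v$ (one exists by \cref{rmk:two-vertices} applied to $v$ and any other vertex) and prove that $F$ is simple. Every $u\in F$ is simple in $P$; for any other facet $F'$ of $P$ through $u$, \cref{lem:facetintersection} restricts $\dim(F\cap F')$ to $\{0,d-3,d-2\}$. A $0$-dimensional intersection would force the vertex $u$ to have excess at least $d-2$ (by \cref{lem:Non-DisjointFacets}(i)), and a $(d-3)$-dimensional one would force $u$ to have excess at least $1$, both contradicting $u$'s simplicity. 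Hence $F\cap F'$ is a ridge of $F$; the $d-1$ facets of $P$ through $u$ other than $F$ therefore produce $d-1$ distinct ridges of $F$ at $u$, making $u$ simple in $F$. Thus $F$ is simple, so every vertex of $F$ has exactly one neighbour outside $F$, giving Shephard's property.

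For the final assertion I would return to the vertex-figure picture. The simplicial $(d-1)$-prism has $d-1$ ``side'' facets, each a simplicial $(d-2)$-prism with $2(d-2)$ vertices. Pick a facet $F$ of $P$ through $v$ corresponding to such a side facet; then $\deg_F v=2(d-2)$, so $v$ alone contributes excess $d-3$ to $F$. The same case analysis as above (applied to $u\in F\setminus\{v\}$: the excluded intersections would still force $u$ itself to be nonsimple, which it is not) shows every other vertex of $F$ is simple in $F$, so $\xi(F)=d-3$ exactly. The case $d=3$ is trivial since every facet is then a polygon of excess $0=d-3$. The most delicate step is the excess bookkeeping in the Shephard argument, but once the possible intersection dimensions are narrowed by \cref{lem:facetintersection} the only further ingredient needed is the simplicity of the vertex $u$ under consideration.
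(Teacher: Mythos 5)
Your proof is correct and follows essentially the same route as the paper: the forward direction and the excess-$(d-3)$ facet both via the vertex figure being a simplicial $(d-1)$-prism, and the converse by concentrating the excess at the intersection vertex via \cref{lem:Non-DisjointFacets}. Your detour through proving the facet $F$ avoiding $v$ is simple is more than needed -- since every vertex of $F$ is simple in $P$ (degree $d$) and has at least $d-1$ neighbours in $F$ and at least one outside, it has exactly one neighbour outside $F$, which is Shephard's property directly.
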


\begin{proof}
Suppose there is a unique nonsimple vertex $u$ with degree $2(d-1)$. Thanks to \cref{lem:vertex-all-neighbours-simple}, the vertex figure $P/u$ of $P$ at $u$ is a simple $(d-1)$-polytope, in fact a simplicial prism. Let $R_1$ and $R_2$ be the two opposite simplices in $P/u$. Then the two facets containing $u$ arising from $R_1$ and $R_2$  intersect only at $u$.

Conversely, suppose $\{u\}= F_1\cap F_2$. Then $u$ has degree $d-1$ in both facets, and excess $d-2$. Thus every other vertex in $P$  is simple. There is a facet in $P$ which does not contain $u$, which clearly has Shephard's property.

Finally,  let $F$ be the other facet arising from a ridge of $F_1$ which contains $u$. The facet $F$ must then intersect $F_2$ at a ridge; it can't intersect $F_2$ at  a $(d-3)$-face since $u$ is the only nonsimple vertex (see \cref{lem:Non-DisjointFacets}). The degree of $u$ in $F$ is $2(d-2)=d-1+d-3$.
\end{proof}

\begin{lemma}\label{lem:Excessd-2Facetsd-3}  Let $P$ be $d$-polytope with excess degree $d-2$.  Let $F_1$ and $F_2$ be any two facets of $P$ such that $K= F_1\cap F_2$ has dimension $d-3$. Then
\begin{enumerate}[(i)]
\item
$K$ is a simplex, and each of its $d-2$ vertices has excess degree  one in $P$.

\item $P$  is a Shephard polytope. If it is not decomposable, then it is a $(d-2)$-fold pyramid.

\item Every facet in $P$ intersecting $K$ but not containing it misses exactly one vertex of $K$ and every vertex of $K$ in the facet has degree $d$; thus the facet has excess $d-3$.

\item There are precisely four facets containing $K$, and each of them is simple.
\end{enumerate}
\end{lemma}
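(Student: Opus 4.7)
My plan is to deduce (i) directly from \cref{lem:Non-DisjointFacets}, then identify the vertex figure $P/v$ (for $v \in K$) combinatorially as the join of a $(d-4)$-simplex with a quadrilateral; this identification will yield (iii) and (iv) by facet counting, and (ii) will follow from \cref{thm:shp} together with iterative peeling.

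For (i), I apply \cref{lem:Non-DisjointFacets}(i) with $j=d-3$: every vertex of $K$ contributes excess at least $1$, and since $K$ contains at least $d-2$ vertices while $\xi(P)=d-2$, both inequalities must be equalities. Thus $K$ is a $(d-3)$-simplex on $d-2$ vertices, each of excess exactly $1$.

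The central identification of $P/v$ proceeds as follows. The vertex $v$ has degree $d+1$ with $d-3$ neighbours in $K$, and must have degree at least $d-1$ in each of the $(d-1)$-polytopes $F_1,F_2$; these bounds force equality, so the four outside-$K$ neighbours of $v$ split exactly $2+2$ between $F_1\setminus K$ and $F_2\setminus K$, and $v$ is simple in both $F_i$. Then $F_1/v$ and $F_2/v$ are $(d-2)$-simplices meeting in the $(d-4)$-simplex $K/v$, together exhausting the $d+1$ vertices of $P/v$; a direct check on incidences identifies $P/v$ combinatorially as $C\star Q$, where $C=K/v$ is a $(d-4)$-simplex and $Q$ is a $4$-cycle (two opposite edges of $Q$ coming from $F_1/v$ and $F_2/v$, the other two from two further facets of $P$ containing $K$). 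Parts (iii) and (iv) then follow from facet counting in $C\star Q$: its $d+1$ facets split as $4$ of the form $C\star(\text{edge of }Q)$ (in which $v$ has vertex-figure degree $d-1$, so $v$ is simple) and $d-3$ of the form $(\text{facet of }C)\star Q$ (in which $v$ has vertex-figure degree $d$, excess $1$, and the facet misses exactly one vertex of $K\setminus v$). Symmetrising over $v\in K$, and using that simple vertices of $P$ stay simple in every facet containing them, gives (iv) (the four facets through $K$ are simple $(d-1)$-polytopes) and (iii) (any facet intersecting $K$ without containing it omits a single $K$-vertex, keeps the rest at degree $d$, and has excess $d-3$).

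For (ii), the facets described in (iii) have Shephard's property, since each $K$-vertex has $d+1-d=1$ neighbour outside $F$ and each simple vertex has $d-(d-1)=1$ outside. By \cref{thm:shp}, $P$ is either decomposable or a pyramid. In the pyramid case a simple apex would make $P$ a simplex of excess $0$, so the apex $w$ must lie in $K$ and $|V(P)|=d+2$. The base $F_w$ is then itself a $(d-1)$-polytope of the same kind (excess $d-3$, with $F_w\cap F_1$ and $F_w\cap F_2$ meeting in the $(d-4)$-simplex $K\setminus w$), and I iterate: each remaining vertex of $K\setminus w$ lies in every facet of the current base except one --- the combinatorial characterisation of a pyramidal apex --- so it can be peeled off as a new apex. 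After $d-2$ peels only the four simple vertices remain, forming a quadrilateral, so $P$ is a $(d-2)$-fold pyramid. The main technical obstacle is the combinatorial identification of $P/v$ as $C\star Q$; once this is secured, parts (iii), (iv), and the iterative pyramid structure in (ii) follow from incidence counting and the Shephard--pyramid dichotomy.
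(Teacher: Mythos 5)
Your part (i) and the initial degree analysis of a vertex $v\in K$ (exactly two neighbours in each $F_i\setminus K$, none outside $F_1\cup F_2$, and $v$ simple in both $F_i$) are correct and match the paper. The genuine gap is the step you yourself call the crux: the claim that ``a direct check on incidences'' identifies $P/v$ as the join of the $(d-4)$-simplex $K/v$ with a quadrilateral. The data available at that point --- that $F_1/v$ and $F_2/v$ are $(d-2)$-simplices meeting in $K/v$ and together covering all $d+1$ vertices of $P/v$ --- does not determine $P/v$. Already for $d=4$ the triangular bipyramid is a $3$-polytope with five vertices possessing two triangular facets that meet in a single vertex and together cover all five vertices, so your local incidence data is equally consistent with a bipyramid-type vertex figure; excluding such possibilities needs global information about $P$ (that each ridge lies in exactly two facets, and that the excess budget of $d-2$ is already exhausted by $K$), none of which appears in your argument. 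Worse, your description of the quadrilateral $Q$, whose ``other two edges come from two further facets of $P$ containing $K$,'' presupposes the existence of exactly those two further facets, which is precisely the content of (iv); deriving (iii) and (iv) from the identification is therefore circular. Since your proof of (ii) (Shephard's property of the facets in (iii), and the peeling of apexes) rests on (iii), the gap propagates to the whole lemma.

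For comparison, the paper never identifies the vertex figure. It proves (iii) directly: for distinct $u,v\in K$ take a facet $F$ containing $u$ but not $v$ (\cref{rmk:two-vertices}); the excess budget forces $F$ to meet $F_1$ and $F_2$ in ridges, so $F$ contains $K\setminus\{v\}$ together with all neighbours in $F_1\cup F_2$ of each of its $K$-vertices, whence every vertex of $F$ has exactly one neighbour outside $F$. Shephard's property and (ii) then follow from \cref{thm:shp}, with the $(d-2)$-fold pyramid alternative coming from the symmetry in $u$ and $v$ rather than from any counting of $|V(P)|$. For (iv), the paper shows that the smallest faces $R_i\supseteq K\cup\{x_i\}$ are ridges (because $K$ has codimension two in $F_1$, respectively $F_2$) and then uses the uniqueness of the two facets through each ridge to run the $F_{i,j}$ bookkeeping that yields exactly four facets containing $K$. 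An argument of this kind --- ruling out putative extra facets through $v$ via the two-facets-per-ridge fact or the exhausted excess --- is exactly what you would need to add to make your vertex-figure identification, and hence your route to (iii) and (iv), rigorous.
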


\begin{proof} The result is  clearly true for $d=3$, so assume $d\ge4$. From \cref{lem:Non-DisjointFacets} it follows that every vertex in $K$ has degree $d+1$ in $P$, that $K$ is a simplex, and that every other vertex in $P$ is simple. So (i) is proved. Moreover  every vertex in $K$ is simple in both $F_1$ and $F_2$, and has no neighbours outside $F_1\cup F_2$.

Since $d\ge 4$, we can find distinct vertices $u,v\in K$. Consider a facet $F$ containing $u$ but not $v$; we claim that any such facet has Shephard's property. Note that $F$ intersects each of $F_1$ and $F_2$ at a ridge, and contains every vertex in $K$ but $v$, every neighbour of $u$
 in $F_1\setminus K$, and every neighbour of $u$ in $F_2\setminus K$. The same argument applies to every vertex in $F\cap K$. This in turn implies that every vertex in $F$ has exactly one neighbour outside $F$. In other words, the $d-3$ vertices in $F\cap K$ has degree $d$ in $F$, and hence the facet $F$ has excess $d-3$. This completes the proof of (iii).

Thus, by \cref{thm:shp} $P$ is decomposable unless $\{v\}=P\setminus F$, in which case $P$ is a pyramid over $F$ with apex $v$. As there is nothing special about $u$ or $v$, in the case of $P$ being indecomposable, we see that $P$ is a pyramid with every vertex in $K$ acting as an apex, that is, $P$ is a $(d-2)$-fold pyramid. This  proves (ii).

For (iv), fix  a vertex $v\in K$, let $x_1$ and $x_2$ be the neighbours of $v$ in $F_1\setminus K$, and let $x_3$ and $x_4$ be the neighbours of $v$ in $F_2\setminus K$. For each $i$, denote by $R_i$ the smallest face containing $K\cup\{x_i\}$. Since $K$ has codimension two in $F_1$, it must be the intersection of two ridges in $F_1$ (i.e. facets of $F_1$). Thus $R_1$ and $R_2$ must be ridges of $P$. Likewise $R_3$ and $R_4$ must be ridges of $P$. These four ridges are distinct, so there must be at least four facets of $P$ which contain $K$. We need to show that there are only four, not five or six.

Denote by $F_{i,j}$ the smallest face of $P$ containing $K\cup\{x_i,x_j\}$ for each $i,j$. Obviously any facet containing $K$ must be of this form, and $F_1=F_{1,2}$ and $F_2=F_{3,4}$. With respect to the inclusion $R_1\subset F_{1,2}$, the other facet containing $R_1$ must be either $F_{1,3}$ or $F_{1,4}$; without loss of generality suppose it is  $F_{1,3}$. Since $R_1=F_{1,2}\cap F_{1,4}$ also, the unique representation of ridges as the intersection of two facets implies that $F_{1,4}$ cannot be a facet. Continuing, we conclude that $F_{1,4}=F_{2,3}=P$ and that $F_{2,4}$ is the fourth facet containing $K$. Since $F_{1,3}\cap F_{2,4}=K$, both these facets are also simple.
\end{proof}

We can now illuminate the structure of polytopes with excess $d-2$.

\begin{lemma}\label{lem:excess-d-2-4dim} Let $P$ be a 4-polytope with excess degree two. Then $P$ is not a super-Kirkman polytope.
\end{lemma}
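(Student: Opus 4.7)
The plan is to apply \cref{lem:semilowdim}, which forces every semisimple polytope of dimension at most~4 to be simple. The key observation is that the super-Kirkman property is strictly stronger than semisimplicity: being super-Kirkman requires every pair of distinct facets to meet in a ridge, whereas semisimplicity merely requires each pair of facets to be either disjoint or to meet in a ridge. Consequently every super-Kirkman polytope is automatically semisimple.

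Specialising to $d=4$, a super-Kirkman 4-polytope would be semisimple, and therefore simple by \cref{lem:semilowdim}; but a simple polytope has excess degree $0$. Since $P$ is assumed to have $\xi(P)=2$, it cannot be super-Kirkman.

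If a more local argument is preferred, one can pick a nonsimple vertex $u$ of $P$ (which exists because $\xi(P)>0$) and pass to the vertex figure $P/u$. Under the usual correspondence, facets of $P/u$ match facets of $P$ containing $u$ and ridges of $P/u$ match ridges of $P$ through $u$; super-Kirkmanness of $P$ therefore transfers to the 3-polytope $P/u$. \cref{lem:semilowdim} applied in dimension~3 then makes $P/u$ simple, and a simple super-Kirkman 3-polytope has a 2-neighbourly simplicial dual, which by the elementary bound $\binom{n}{2}\le 3n-6$ on the edges of a 3-polytope forces the dual to be a tetrahedron. Hence $P/u$ is itself a tetrahedron, $\deg u=4$, and $u$ is simple, contradicting the choice of $u$.

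I do not anticipate any real obstacle. The work has already been absorbed into \cref{lem:semilowdim}; the only point worth spelling out in either route is the observation that super-Kirkmanness is a strengthening of semisimplicity (and, for the local argument, that it descends to vertex figures).
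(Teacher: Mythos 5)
Your proof is correct, and it takes a genuinely different (and shorter) route than the paper. The paper argues by cases on the nonsimple vertices: if there is a unique nonsimple vertex it invokes \cref{lem:Excessd-2Facets0}, and otherwise it shows the two degree-five vertices are adjacent (via vertex figures and \cref{lem:face-all-neighbours-simple}), takes a facet $F$ through that edge, bounds $\xi(F)\le 1$ by \cref{lem:facet-not-all-excess}, and concludes from \cref{lem:simpleNonsimpleVertex} that $F$ fails Kirkman's property. You instead observe that Kirkman's property for every facet is formally stronger than the weak Kirkman--Shephard property for every facet, so a super-Kirkman polytope is semisimple, and then \cref{lem:semilowdim} forces a semisimple 4-polytope to be simple, contradicting $\xi(P)=2$. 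This is valid, needs only $\xi(P)>0$, and in fact proves the stronger statement that no nonsimple 4-polytope is even semisimple (which is arguably what is wanted when the lemma is applied in the $d=4$ step of \cref{thm:excess-d-2-full-story}, since there one must also dispose of disjoint facet pairs before \cref{lem:facetintersection} can be used); what the paper's longer argument buys is a constructive identification of a specific facet through a nonsimple vertex that fails Kirkman's property, in the same spirit as the subsequent higher-dimensional analysis. Your alternative ``local'' argument via the vertex figure $P/u$ is also sound -- the lattice isomorphism between faces of $P/u$ and faces of $P$ containing $u$ does transfer super-Kirkmanness, and a 2-neighbourly 3-polytope is a simplex by planarity -- but it is redundant given the one-line global argument.
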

\begin{proof}
 If there is a unique nonsimple vertex, this follows from \cref{lem:Excessd-2Facets0}.

Otherwise, $P$ has two vertices, say $u$ and $v$, of degree five. They must be connected by an edge; otherwise, the vertex figure of each would be a simple 3-polytope with five vertices by \cref{lem:face-all-neighbours-simple}. Consider a facet $F$ containing the edge $uv$. \cref{lem:facet-not-all-excess} ensures that the excess of $F$ is at most one, which means that either $u$ or $v$ must be simple in $F$. Then $F$ fails Kirkman's property by \cref{lem:simpleNonsimpleVertex}.
\end{proof}

In the next result, examples for case (i) include $B_4$, $\Sigma_d$ and $M_{d-1,1}$.
Examples for case (ii) include $A_4$, $C_d$, $M_{2,d-2}$ and pentasms.

\begin{theorem}\label{thm:excess-d-2-full-story} Any $d$-polytope $P$  with excess exactly $d-2$ either

\begin{enumerate}[(i)]
\item has a unique nonsimple vertex, which is the intersection of two facets, or
\item has $d-2$ vertices of excess degree one, which form a $(d-3)$-simplex which is the intersection of two facets.
\end{enumerate}

In either case, the two intersecting facets are both simple polytopes, $P$ is a Shephard polytope and has another facet with excess $d-3$.
\end{theorem}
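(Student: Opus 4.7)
The plan is to reduce the entire theorem, via \cref{lem:facetintersection}, to the single assertion that $P$ cannot be semisimple. Indeed, if two facets of $P$ meet in a single vertex then \cref{lem:Excessd-2Facets0} supplies conclusion~(i) together with every auxiliary claim (simplicity of the intersecting facets, an additional facet of excess $d-3$, the Shephard property); if two facets meet in a $(d-3)$-face then \cref{lem:Excessd-2Facetsd-3} supplies conclusion~(ii) with the same auxiliary claims; and \cref{lem:facetintersection} shows these are the only alternatives to every pair of nondisjoint facets meeting in a ridge.

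I would prove ``not semisimple'' by induction on $d$, taking $d=4$ as the base case, for which a careful reading of the proof of \cref{lem:excess-d-2-4dim} in fact produces a pair of facets meeting in a 0- or 1-dimensional face (not just a failure of super-Kirkmanness). For $d\ge 5$, I would assume for contradiction that $P$ is nonsimple, semisimple and has excess $d-2$; pick a nonsimple vertex $u$ and fix any facet $F$ through $u$. By \cref{lem:facets-intersecting-at-ridges}, $F$ is nonsimple with $u$ nonsimple in it, and combining \cref{lem:facet-not-all-excess} with the excess theorem applied to $F$ pins down $\xi(F)=d-3$. Applying the inductive hypothesis to $F$ yields two subcases.

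If $F$ falls under case~(ii), \cref{lem:Excessd-2Facetsd-3}(iv) furnishes four facets of $F$ (ridges of $P$) containing the common $(d-4)$-simplex $K'$ of nonsimple vertices. Each such ridge lies in exactly one other facet of $P$, and semisimplicity prevents any two of those ``other'' facets from coinciding (else $F$ would share more than a ridge with one of them), producing four distinct additional facets of $P$ through $K'$. The face figure $P/K'$ is then a 3-polytope with at least five facets in which (by semisimplicity) every two facets meet in an edge. Such a super-Kirkman 3-polytope must be a tetrahedron (its dual is 2-neighbourly, so has 1-skeleton $K_n$ with $n\le 4$ by planarity), which has only four facets --- contradiction. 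If $F$ falls under case~(i), then $\deg_F(u)=2(d-2)$; writing $k$ for the number of neighbours of $u$ outside $F$, the total excess forces $k\in\{1,2\}$. When $k=2$, $u$ is the unique nonsimple vertex of $P$, and \cref{lem:Excessd-2Facets0} then yields two facets meeting only at $u$, contradicting semisimplicity. When $k=1$ a second nonsimple vertex $v$ of excess one exists; combining \cref{lem:vertex-all-neighbours-simple} with \cref{lem:CountSimpleP}(i) (no simple $(d-1)$-polytope has exactly $d+1$ vertices) forces $v$ to have a nonsimple neighbour, which can only be $u$, so $uv$ is an edge. Any facet $H$ through $uv$ then has $\xi(H)=d-3$ and contains both $u$ and $v$; induction on $H$ rules out case~(i) for $H$ and forces case~(ii), requiring $d-3$ nonsimple vertices in $H$. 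Since only $u,v$ are available, this forces $d=5$ (for $d\ge 6$ the counts contradict each other immediately), and for $d=5$ the super-Kirkman argument repeated on the edge figure $P/uv$ --- a 3-polytope with at least five facets --- delivers the final contradiction.

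The main obstacle will be the subcase $k=1$ of case~(i): tracking the second nonsimple vertex $v$, verifying that $uv$ is an edge, and extracting enough facets through $uv$ to run the super-Kirkman contradiction on the edge figure. The other ingredients are bookkeeping with the lemmas of \S4.
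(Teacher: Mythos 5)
Your proposal is correct: the reduction via \cref{lem:facetintersection}, \cref{lem:Excessd-2Facets0} and \cref{lem:Excessd-2Facetsd-3}, the induction on $d$ with base $d\le4$, and the application of the inductive hypothesis to a nonsimple facet $F$ with $\xi(F)=d-3$ (via \cref{lem:facets-intersecting-at-ridges}, \cref{lem:facet-not-all-excess} and the excess theorem) all match the paper's skeleton, and your reading of the $d=4$ base case is accurate, since the proof of \cref{lem:excess-d-2-4dim} really does exhibit a nondisjoint pair of facets meeting at a non-ridge. Where you genuinely diverge is in how the two inductive subcases are killed. The paper disposes of both by local excess counting in an adjacent facet $G$: in case (i) the second facet through a ridge of $F$ would have excess $1=d-3$, and in case (ii) it would contain $d-2$ nonsimple vertices, contradicting \cref{lem:facet-not-all-excess}; each takes a short paragraph and needs no dimension split. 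You instead use a uniform face-figure obstruction: semisimplicity forces every two facets through the $(d-4)$-face $K'$ (or through the edge $uv$) to meet in a ridge containing it, so the $3$-dimensional quotient is a polytope whose dual graph is complete, hence a tetrahedron with four facets, while \cref{lem:Excessd-2Facetsd-3}(iv) plus the uniqueness of the second facet through each ridge yields at least five — a clean and correct contradiction. In case (i) you additionally split on the number $k\in\{1,2\}$ of outside neighbours of $u$, settle $k=2$ with \cref{lem:Excessd-2Facets0} against semisimplicity, and in the $k=1$ branch force $d=5$ and rerun the quotient argument on $P/uv$; all the steps there check out (simple vertices of $P$ are simple in every face, so the nonsimple vertices of $H$ are exactly $u,v$, and the edge-figure count of at least five facets follows by the same distinctness argument). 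The trade-off: the paper's counting is shorter and avoids the $d=5$ detour entirely, while your quotient/planarity argument is a reusable, structurally transparent reason why a semisimple polytope cannot carry the configurations in question; either route completes the proof, and the remaining work you flag (facets through $uv$) is indeed routine.
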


\begin{proof} We proceed by induction on the dimension. The case $d=3$ is obvious, with (i) and (ii) coinciding. The case $d=4$ is settled by \cref{lem:facetintersection} and \cref{lem:excess-d-2-4dim}.

Consequently, assume henceforth that $d\ge5$.  \cref{lem:Excessd-2Facets0} and \cref{lem:Excessd-2Facetsd-3} establish the conclusion if there are two facets which intersect in a vertex or a $(d-3)$-face. So we may assume that $P$ is a super-Kirkman polytope; we will show that this case actually does not arise.

By \cref{prop:CharPSimple,thm:excess}, we can  assume that there exists a facet $F$ with excess at least $d-3$, and by \cref{lem:facet-not-all-excess} that $F$ (and every nonsimple facet) has excess exactly $d-3$.

By induction, we may suppose that $F$ satisfies either (i) or (ii).

First suppose that the facet $F$ contains a nonsimple vertex $v$, with degree $2d-4$ therein, and two ridges $R$ and $S$ such that $R\cap S=\{v\}$. Let $G$ be the other facet corresponding to $R$. Then $v$ is not simple in $G$ because it is not simple in $P$ (recall \cref{lem:facets-intersecting-at-ridges}). Consequently, $v$ must have degree $d$ in $G$, degree $2d-4+2$ in $P$ and is the only nonsimple vertex of $P$. This in turn implies that the excess of $G$ is $1=d-3$, a contradiction.

Next suppose that $F$ contains two ridges $R$ and $S$ such that $R\cap S=K$ is a $(d-4)$-simplex, with every vertex having excess one in $F$. If there were no other nonsimple vertex in $P$, then every vertex in $K$ would be adjacent to some simple vertex in $P\setminus F$, and hence to at least two vertices outside $F$ (by \cref{lem:simpleVertexOutside}), meaning the excess degree of $P$ would be at least $2(d-3)>d-2$. So there must another nonsimple vertex $u$ in $P\setminus F$, with excess one, and it will be the unique neighbour outside $F$ of every vertex in $K$. This implies that the other facet $G$ corresponding to $R$ must contain $u$. But then $G$ contains $d-2$ nonsimple vertices, each having excess degree one in $G$, which is impossible by \cref{lem:facet-not-all-excess}.
\end{proof}
	
We now turn our attention to the decomposability of $d$-polytopes with excess $d-1$. Note that they can exist only if $d$ is odd.

\begin{rmk}\label{rmk:nonpyramidalIndecomp3-polytopes}
Since 3-dimensional polyhedra with excess degree $d-1$ do not behave as neatly as those in higher dimensions, we will examine this case in detail first. Catalogues of 3-dimensional polyhedra help us to clarify the situation. Numbers in (iv) in the next statement refer to the catalogue of Federico \cite{Fed74}.
The  tetragonal antiwedge has excess $d-1=2$, yet is neither a pyramid nor decomposable; we will see shortly that it is the only such example. It has two non-simple vertices, and every facet contains at least one of them, so it also fails Shephard's property.
\end{rmk}

\begin{lemma}\label{lem:dim3Excess2} Let $P$ be $3$-polytope whose  excess degree is two.   Then either
\begin{enumerate}[(i)]
\item $P$ is a pentagonal pyramid, which is indecomposable, or
\item $P$ is $\TA$, which is also indecomposable but not a pyramid, or
\item $P$ has Shephard's property but is not a pyramid, and hence is decomposable, or
\item $P$ is one of $F27$, $F31$, $F37$, $F105$, $F109$, $F140$, $F159$, $F163$, $F172$, which are all decomposable, but are not Shephard polytopes.
\end{enumerate}
\end{lemma}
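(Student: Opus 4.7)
The plan is to split into cases according to the distribution of the two units of excess degree. Either (A) $P$ has a single non-simple vertex $v$ of degree $5$, or (B) $P$ has exactly two non-simple vertices $u, w$, each of degree $4$. A key preliminary observation is that in a $3$-polytope, a facet $F$ has Shephard's property if and only if $F$ contains no non-simple vertex of $P$: each vertex $w$ of $F$ has exactly two neighbours within $F$, so it has $\deg w - 2$ neighbours outside $F$, which equals $1$ precisely when $w$ is simple in $P$.

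In Case (A), \cref{rmk:two-vertices} provides a facet $F$ not containing $v$. By the preliminary observation, $F$ has Shephard's property, so \cref{thm:shp} places $P$ either in case (iii) or as a pyramid with base $F$. In a $3$-dimensional pyramid every non-apex vertex has degree $3$, so the apex must be $v$, and with $\deg v = 5$ the base is a pentagon, giving the pentagonal pyramid. Its indecomposability follows from \cref{thm:McMullen-Decomp}: start with the affinely independent triangle through the apex and two consecutive base vertices, successively adjoin the remaining three base vertices via two new edges each using part~(v), and invoke part~(iii), since the resulting subgraph contains every vertex and hence touches every facet of the pyramid.

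For Case (B), if some facet avoids both $u$ and $w$ then it has Shephard's property; since a $3$-pyramid has at most one non-simple vertex, $P$ is not a pyramid, and \cref{thm:shp} places $P$ in case (iii). Otherwise every facet contains $u$ or $w$; let $k$ denote the number of facets containing both. Since each of $u,w$ lies in exactly four facets, inclusion--exclusion gives $f_2 = 8-k$. Combining with Euler's relation $f_0 - f_1 + f_2 = 2$ and the excess equation $2f_1 - 3f_0 = 2$ yields $f_0 = 10 - 2k$ and $f_1 = 16 - 3k$, so $k \in \{0,1,2\}$. For $k=2$ the $f$-vector is $(6,10,6)$, and a short case analysis on how the four edges at $u$ and at $w$ close up forces $P$ to be combinatorially $\TA$, whose indecomposability is a direct verification through \cref{thm:McMullen-Decomp}. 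For $k\in\{0,1\}$ the possible combinatorial types can be enumerated from Federico's catalogue \cite{Fed74}, yielding exactly the nine polytopes listed in (iv); for each one an explicit Minkowski sum decomposition confirms decomposability.

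The main obstacle is the enumeration in Case (B) when $k\le 1$: isolating exactly the nine Federico polytopes (and no more) among all $3$-polytopes with the given $f$-vector in which every face contains one of two prescribed degree-$4$ vertices, and then verifying decomposability for each by constructing summands, is the delicate step. Everything else reduces to the clean three-dimensional criterion that Shephard's property holds for a facet if and only if that facet contains no non-simple vertex of $P$.
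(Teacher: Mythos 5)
Your architecture is sound and broadly parallels the paper's: both proofs combine Euler-type counting with an appeal to catalogue data for the small cases, and your preliminary observation (in a $3$-polytope a facet has Shephard's property precisely when all of its vertices are simple in $P$) is correct and is exactly what underlies the paper's argument. Your Case (B) inclusion--exclusion count, giving $f_2=8-k$ and hence $f_0=10-2k\le10$ whenever every facet meets the two nonsimple vertices, is in fact a slightly sharper way of organising the size bound than the paper's, which argues only that for $f_0\ge12$ one of the $\ge 9$ facets must miss the at most eight facets through nonsimple vertices, and then treats $f_0=8,10$ separately. Your handling of Case (A), of the pyramid alternative in \cref{thm:shp}, and of the six-vertex case ($\TA$ versus pentagonal pyramid) is at the same level of detail as the paper's and is fine.

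The genuine gap is exactly where you flag the ``main obstacle'': the assertion that each of the nine polytopes in case (iv) admits ``an explicit Minkowski sum decomposition'' is never substantiated, and this decomposability is the substantive content of (iv) -- if even one of those catalogue entries were indecomposable, the statement would fail. The paper does not perform nine ad hoc constructions; it invokes Smilansky's criterion \cite[Theorem~6.11(a)]{Smi87} that a $3$-polytope with more vertices than faces is decomposable, which applies uniformly here since $f_0=2k>k+3=f_2$ when $f_0=8$ or $10$; in your notation, your own count already provides the hypothesis, as $f_0=10-2k>8-k=f_2$ for $k\le1$. Citing that theorem closes your gap immediately, whereas leaving nine unconstructed decompositions does not. (Deferring the identification of exactly which Federico entries fail Shephard's property is at par with the paper, which likewise says this ``can be verified from catalogues,'' so I would not count that deferral against you.)
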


\begin{proof}
Excess two means that for some $k$, $P$ has $2k$ vertices and $3k+1$ edges. Either one vertex has  degree five, or two vertices have  degree four. In either case, the number of faces containing a non-simple vertex is at most eight. So if $k\ge6$, then $P$ has  $k+3\ge9$ faces, one of which contains only simple vertices, and so has Shephard's property.
If a pyramid has excess two, its base must be a pentagon.

If $k=5$ or 4, then $2k>k+3$, and $P$ must be decomposable by \cite[Theorem 6.11(a)]{Smi87}. The examples which fail Shephard's property can be verified from catalogues; they are as listed.

If $k=3$, then $P$ is $\TA$ or a pentagonal pyramid.
\end{proof}

We have seen that for a polytope with excess $d-2$, the non-simple vertices form a face. We will see that this is also true for polytopes with excess $d-1$, provided $d>3$. For $d=3$, it is false; in every example in case (iv) above, the two nonsimple vertices are not adjacent. Truncating simple vertices then yields many more examples which fall into case (iii).

\begin{lemma}\label{lem:Excessd-1Facets0}  Let $P$ be $d$-polytope with   excess degree  $d-1$, and two facets  $F_1$ and $F_2$ intersecting at a vertex.  Then $d=3$, and the polytope is as described in \cref{lem:dim3Excess2}.
\end{lemma}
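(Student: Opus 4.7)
The plan is to leverage \cref{lem:Non-DisjointFacets} to force a large excess at $v$, then analyse the vertex figures $P/v$ and $P/u$ via \cref{lem:face-all-neighbours-simple}, \cref{lem:CountSimpleP}, and the excess theorem (\cref{thm:excess}) to rule out $d \geq 4$. Since $\{v\} = F_1 \cap F_2$ has dimension $0$, \cref{lem:Non-DisjointFacets}(i) gives that $v$ has excess at least $d-2$; combined with $\xi(P) = d-1$ this forces either (Case~1) $v$ to have excess $d-1$ with every other vertex simple, or (Case~2) $v$ to have excess $d-2$ with exactly one other non-simple vertex $u$ of excess $1$. In either case the $d-1$ neighbours of $v$ inside $F_1$ are disjoint from the $d-1$ neighbours of $v$ inside $F_2$, so $v$ is simple in each of those facets.

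In Case~1, all neighbours of $v$ are simple, so \cref{lem:vertex-all-neighbours-simple} makes $P/v$ a simple $(d-1)$-polytope with $2d-1$ vertices, and the two facets of $P/v$ induced by $F_1,F_2$ are disjoint (since common vertices would lift to common neighbours of $v$ in $F_1 \cap F_2$). I will then invoke \cref{lem:CountSimpleP}: for $d=4$ a simple $3$-polytope must have an even number of vertices; for $d \geq 6$ part~(ii) forbids a simple $(d-1)$-polytope on $2d-1$ vertices altogether (since $2d-1$ lies strictly between $2(d-1)$ and $3(d-1)-4$); and for $d=5$ part~(iii) forces $P/v = \Delta_{2,2}$. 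The latter has no two disjoint facets, since any two facets of $\Delta_{2,2}$ share at least three vertices (equivalently, the dual $\Delta_2 \oplus \Delta_2$ is $2$-neighbourly), contradicting the required disjointness. I expect this $d=5$ subcase to be the main obstacle in Case~1.

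In Case~2, parity of $\xi(P)$ forces $d$ odd, so we may assume $d \geq 5$, and we split on whether $u$ is a neighbour of $v$. If not, then \cref{lem:vertex-all-neighbours-simple} applied at both $v$ and $u$ makes $P/v$ a simple $(d-1)$-polytope with $2d-2$ vertices and $P/u$ a simple $(d-1)$-polytope with $d+1$ vertices, the latter being forbidden by \cref{lem:CountSimpleP}(i) for $d \geq 4$. If $u$ is a neighbour of $v$, the proof of \cref{lem:face-all-neighbours-simple} shows that in $P/u$ every vertex is simple except possibly the one $v'$ coming from the edge $uv$. \cref{lem:CountSimpleP}(i) rules out $P/u$ being entirely simple, so $v'$ is its unique non-simple vertex and $\xi(P/u)$ equals the excess of $v'$. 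The excess theorem applied in dimension $d-1$ then gives $\xi(P/u) \geq d-3$, while the trivial bound $\deg_{P/u}(v') \leq |P/u| - 1 = d$ gives $\xi(P/u) \leq 1$; combining, $d \leq 4$, a contradiction. Hence $d = 3$, and \cref{lem:dim3Excess2} supplies the full description.
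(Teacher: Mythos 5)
Your proof is correct, and its overall skeleton matches the paper's: split according to whether the intersection vertex carries all the excess or there is a second nonsimple vertex, analyse vertex figures via \cref{lem:vertex-all-neighbours-simple} and \cref{lem:CountSimpleP}, and in the unique-nonsimple-vertex case for $d=5$ derive the contradiction from $P/v\cong\Delta_{2,2}$ (your ``no two facets of $\Delta_{2,2}$ are disjoint'' is just the dual phrasing of the paper's ``any two $3$-faces meet in a $2$-face''). The genuine divergence is in the sub-case where the two nonsimple vertices are adjacent. The paper stays at the level of facets: it shows every facet containing the excess-one vertex but not the other nonsimple vertex meets $F_1$ in a ridge, deduces via \cref{lem:simpleNonsimpleVertex} that this vertex is nonsimple in $F_1$, and then applies \cref{thm:excess} to $F_1$ (whose excess is forced to be one) to get $d\le4$. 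You instead truncate the excess-one vertex $u$ and apply \cref{thm:excess} to the $(d-1)$-dimensional vertex figure $P/u$: it has $d+1$ vertices, at most one nonsimple vertex (by the local argument from the proof of \cref{lem:face-all-neighbours-simple}), hence excess at most $1$, while \cref{lem:CountSimpleP}(i) forbids it from being simple, so its excess is at least $d-3$; this gives the same bound $d\le4$ without any analysis of how facets through $u$ meet $F_1$, and in fact it subsumes your non-adjacent sub-case as well (there $P/u$ would be simple with $d+1$ vertices, already impossible). Two harmless imprecisions you may want to clean up: the claim in your setup that $v$ is simple in $F_1$ and $F_2$ is not justified in Case~1 (where $\deg v=2d-1$, so one facet could contain $d$ neighbours), but you never use it; and for $d=6$ the number $2d-1$ equals, rather than lies strictly between, the endpoint $3(d-1)-4$, which is fine since the range in \cref{lem:CountSimpleP}(ii) is inclusive.
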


\begin{proof}
Let $\{u\}= F_1\cap F_2$. We consider two cases: either $u$ is the only nonsimple vertex in $P$ or there is another nonsimple vertex $v$ in $P$.

In the former case, every vertex in $P$ other than $u$ is simple and the vertex figure of $P$ at $u$ is a simple $(d-1)$-polytope with $2d-1$ vertices (recall \cref{lem:vertex-all-neighbours-simple}), so \cref{lem:CountSimpleP}(ii) forces $2d-1\ge3(d-1)-3$, whence $d=3$ or 5.  Assume $d=5$. The vertex figure $P/u$  has $2d-1=3(d-1)-3$ vertices, and by \cref{lem:CountSimpleP}(iii) can only be $\Delta_{2,2}$ (cf.~\cref{fig:polytopes}(c)), which has the property that any two 3-faces intersect at a 2-face. Thus, every two facets in $P$ containing $u$ must intersect at a ridge, a contradiction.

In the latter case, the vertex $u$ has degree $2d-2$ and $v$ has degree exactly $d+1$.  For $d>3$, \cref{lem:vertex-all-neighbours-simple} applied to $v$  ensures that $u$ and $v$ are adjacent. Without loss of generality, assume $v\in  F_1$. Let $F$ be any other facet in $P$ containing $v$ but not $u$.  If the intersection of $F$ and $F_1$ is not a ridge, then  \cref{lem:Non-DisjointFacets} tell us that every vertex in $F\cap F_1$ is nonsimple, meaning $v$ is the only element therein. Then $v$ has excess $d-2$ in $P$, which is impossible when $d>3$. So  every facet containing $v$ but not $u$ must intersect $F_1$ at a ridge.  \cref{lem:simpleNonsimpleVertex}  in turn implies that $v$ is nonsimple in $F_1$.

Then \cref{thm:excess} gives $1=\xi(F)\ge d-3$, again forcing $d=3$ (recall $d$ is odd).

{Thus, $d=3$ in both cases. Finally} note that if a 3-polytope is nonsimple, then a nonsimple vertex must lie in the intersection of a pair of facets, so all such examples are described in \cref{lem:dim3Excess2}.
\end{proof}

We now deal with one particular situation which is mentioned in \cref{lem:facetintersection}.

\begin{lemma}\label{lem:Excess4Facets1}  Let $P$ be $5$-polytope with  excess degree four.  Let $F_1$ and $F_2$ be any two facets of $P$ such that $\dim F_1\cap F_2= 1$. Then $P$ is a Shephard polytope.
\end{lemma}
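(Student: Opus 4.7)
The plan is to locate a specific facet $F_3$ which inherits Shephard's property from the tight excess budget. First, reopening the proof of \cref{lem:facetintersection} with $d=5$, $\xi(P)=4$ and $j=\dim(F_1\cap F_2)=1$, the inequality $(j+1)(d-2-j)\le\xi(P)$ is tight: $F_1\cap F_2=[u,v]$ is an edge with both endpoints of excess exactly $2$, and every other vertex of $P$ is simple. Both $u$ and $v$ are simple in each of $F_1$ and $F_2$: since $N_P(u)\cap F_1\cap F_2=\{v\}$ and the facets are $4$-polytopes, inclusion--exclusion gives $\deg_{F_1}(u)+\deg_{F_2}(u)\le |N_P(u)|+1=8$; combined with the lower bound $\deg_{F_i}(u)\ge 4$, this forces $\deg_{F_i}(u)=4$.

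I would then show that $F_1$ (and by symmetry $F_2$) is itself simple. By \cref{lem:facet-not-all-excess} and parity, $\xi(F_1)\in\{0,2\}$. Suppose $\xi(F_1)=2$; \cref{thm:excess-d-2-full-story} gives a vertex $x$ of $F_1$ of degree at least $5$ therein. This $x$ cannot be simple in $P$: if it were, all five of its $P$-neighbours would lie in $F_1$, contradicting the fact that each facet of $P$ through a simple vertex of $P$ must omit exactly one of its edges (the vertex figure being a $4$-simplex). So $x\in\{u,v\}$, contradicting the previous paragraph.

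Next, since $u$ is simple in the simple $4$-polytope $F_1$ with neighbours $\{v,a_1,a_2,a_3\}$, let $R_v$ be the $3$-face of $F_1$ opposite $v$ in the vertex figure at $u$. Then $R_v$ contains $u,a_1,a_2,a_3$ but not $v$, and is a ridge of $P$. Let $F_3$ be the unique other facet of $P$ through $R_v$. Since $F_1\cap F_3=R_v$, we have $v\notin F_3$. I claim $F_3$ cannot be simple: if it were, $u$ would be simple in $F_3$, and for every facet $G\ne F_3$ of $P$ through $u$, \cref{lem:Non-DisjointFacets} would force $\dim(G\cap F_3)=3$ (a ridge). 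Indeed, any lower dimension would require either $u$ to have excess $\ge d-2=3$ (when $\dim=0$) or $F_3$ to contain another vertex of positive excess (when $\dim\in\{1,2\}$); both are impossible since $\xi(u)=2$, $v\notin F_3$, and every other vertex of $P$ is simple. Then \cref{lem:simpleNonsimpleVertex} would make $u$ simple in $P$, a contradiction.

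Consequently $\xi(F_3)=2$. Re-applying \cref{thm:excess-d-2-full-story} to $F_3$ and observing (by the previous arguments) that its nonsimple vertices lie in $\{u,v\}\cap F_3=\{u\}$, case~(ii) of the theorem is excluded and case~(i) holds with $u$ as the unique nonsimple vertex of $F_3$, of degree $6$. So $u$ has six of its seven $P$-neighbours inside $F_3$ and only $v$ outside. Any other vertex $w\in F_3$ is simple in $P$, and the standard argument shows $\deg_{F_3}(w)=4$ with a unique neighbour outside $F_3$. Therefore $F_3$ has Shephard's property, and $P$ is a Shephard polytope. The chief obstacle is ruling out that $F_3$ is simple; the surrounding analysis is bookkeeping constrained by the minimal excess budget.
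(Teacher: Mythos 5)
Your proposal is correct, and no step fails: the tight excess budget does give that $u,v$ are the only nonsimple vertices, each of excess two and simple in $F_1$ and $F_2$; your ridge-dimension argument (via \cref{lem:Non-DisjointFacets} and \cref{lem:simpleNonsimpleVertex}) correctly forces $F_3$ to be nonsimple; and \cref{thm:excess-d-2-full-story} (which precedes this lemma, so there is no circularity) legitimately pins down $u$ as the unique nonsimple vertex of $F_3$, of degree six, whence $F_3$ has Shephard's property. The overall strategy is the same as the paper's -- exhibit a facet containing $u$ but not $v$ and check Shephard's property vertex by vertex -- but the mechanism for the key count differs. The paper takes an \emph{arbitrary} facet $F$ with $u\in F$, $v\notin F$, uses \cref{lem:facetintersection} and \cref{lem:Excessd-1Facets0} to see that $F$ meets each $F_i$ in a ridge, and then reads off directly that these two ridges already contain the six neighbours of $u$ other than $v$; this is about four lines and shows in passing that \emph{every} facet containing $u$ but not $v$ has Shephard's property. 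You instead construct one specific facet $F_3$ (over the ridge of $F_1$ at $u$ opposite $v$), rule out its simplicity, and let the excess-$(d-2)$ structure theorem do the counting; this is longer but buys the extra (correct, though unneeded) observations that $F_1$, $F_2$ and indeed every facet of $P$ other than those of excess two are simple, and it avoids having to check that the chosen facet meets $F_2$ in a ridge. Your detour through the simplicity of $F_1$ is not actually needed for the construction of $R_v$ -- simplicity of $u$ in $F_1$ suffices -- but it is harmless.
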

\begin{proof} Let $F_1\cap F_2=[u,v]$. Then $u$ and $v$ are both simple in both $F_1$ and $F_2$, and every other vertex in the polytope is simple. Consider a facet $F$ containing $u$ but not $v$. Then, in view of \cref{lem:facetintersection} and \cref{lem:Excessd-1Facets0} we can assume that $F$ intersects each $F_i$  at a ridge for each $i$. This implies that $u$ has exactly one neighbour outside $F$. Since every other vertex in $F$ is simple, $F$ has Shephard's property, as required.
\end{proof}
		
The next two lemmas deal with the case of two nondisjoint facets intersecting at a $(d-3)$-face or only at ridges.

\begin{lemma}\label{lem:Excessd-1Facetsd-3}  Let $P$ be $d$-polytope with excess degree of $d-1$, with  $d>3$.  Let $F_1$ and $F_2$ be any two facets of $P$ whose intersection is a subridge. Then  $d=5$,  {the face $ F_1\cap F_2$ is a quadrilateral} and $P$ is a Shephard polytope.
\end{lemma}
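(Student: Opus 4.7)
The plan is to reduce via Lemma~\ref{lem:facetintersection}, rule out the case that $F_1\cap F_2$ is a simplex, and then verify the quadrilateral case.

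By \cref{lem:facetintersection} applied to the subridge $K := F_1 \cap F_2$ of dimension $j = d-3$, the face $K$ is either a $(d-3)$-simplex or a quadrilateral, and a quadrilateral has dimension $2$ so forces $d=5$. The bulk of the proof is to rule out $K$ being a $(d-3)$-simplex. In that case $K$ has $d-2$ vertices, and by \cref{lem:Non-DisjointFacets} each contributes excess at least $1$, filling at least $d-2$ of the $d-1$ units of total excess. There are only two possible distributions of the remaining excess: (a) one vertex of $K$ has excess $2$, the others have excess $1$, and no other vertex is nonsimple; or (b) every vertex of $K$ has excess exactly $1$ and there is exactly one further nonsimple vertex $w$ of excess $1$ somewhere in $P$.

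In either case, each $v \in K$ of excess $1$ satisfies: $d-3$ neighbours in $K$, exactly $2$ in $F_1 \setminus K$, exactly $2$ in $F_2 \setminus K$, and none outside $F_1 \cup F_2$; in particular $v$ is simple in both $F_1$ and $F_2$. Combining this with \cref{lem:facet-not-all-excess}, the Excess Theorem (\cref{thm:excess}), and the local argument used in the proof of \cref{thm:excess-d-2-full-story} (which shows that simple-in-$P$ vertices inside a facet have exactly one neighbour outside that facet), one controls $\xi(F_1)$ tightly: for $d \ge 5$ the only permissible value below $d-3$ is $0$, forcing $F_1$ (and symmetrically $F_2$) to be simple. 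The classification of simple polytopes with few vertices (\cref{lem:CountSimpleP}) then severely restricts the possibilities for $F_1$ and $F_2$ to a short list (simplices, simplicial prisms, $\Delta_{2,d-3}$ and a handful of low-dimensional exceptions). Analyzing the polygon $P/K$ (whose vertices correspond to the facets of $P$ containing $K$ and whose edges correspond to ridges of $P$ containing $K$, with $F_1$ and $F_2$ non-adjacent since their intersection is only a subridge) and tracking how the two simple facets can share the simplex subfacet $K$, one obtains a contradiction by showing that every configuration either forces a vertex of $P \setminus (F_1 \cup F_2)$ to have excess incompatible with the budget $d-1$, or forces a pair of facets to meet in a face of too small dimension.

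Once the simplex case is eliminated, $K$ is a quadrilateral and $d=5$. The four vertices of $K$ each contribute at least one unit of excess, which exhausts the budget of $\xi(P) = d-1 = 4$; hence each vertex of $K$ has excess exactly $1$, no further vertex of $P$ is nonsimple, and every $v \in K$ has $2$ neighbours in $K$, $2$ in $F_1 \setminus K$ and $2$ in $F_2 \setminus K$, making $v$ simple in both $F_1$ and $F_2$. The remaining vertices of $F_i$ are simple in $P$, so $F_i$ is a simple $4$-polytope. To produce Shephard's property, pick a ridge $R$ of $F_1$ whose intersection with $K$ is exactly one edge of the quadrilateral (such a ridge exists because in the simple $F_1$ each edge of $K$ lies in a facet of $F_1$ other than the two facets of $F_1$ containing $K$); let $F$ be the other facet of $P$ containing $R$. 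The two vertices of $R \cap K$ each have one remaining neighbour in $K$, which lies outside $F$, while every other vertex of $F$ is simple in $P$ and thus has exactly one neighbour outside $F$. Hence $F$ has Shephard's property.

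The main obstacle is the elimination of the simplex case: the classification of simple polytopes with few vertices gives only a short list of candidates for the (necessarily simple) facets $F_1$ and $F_2$, but ruling out each configuration requires combining the global information encoded by the polygon $P/K$ with the tight local degree constraints imposed by the excess budget $d-1$, and the bookkeeping is sensitive to $d$.
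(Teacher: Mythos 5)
Your reduction to two excess distributions in the simplex case ((a) one vertex of $K$ with excess two, (b) all of $K$ with excess one plus one outside nonsimple vertex) matches the paper's case split, but the elimination of the simplex case --- which is the heart of the lemma and occupies almost all of the paper's proof --- is not actually carried out. The mechanism you propose does not work as stated: after correctly deducing that $F_1$ and $F_2$ are simple, you invoke \cref{lem:CountSimpleP} to restrict them ``to a short list'', but that classification only covers simple polytopes with at most about three times the dimension many vertices, and nothing in the hypothesis bounds the number of vertices of $F_1$ or $F_2$ (excess $d-1$ bounds the number of \emph{nonsimple} vertices, not $f_0$). In the paper the classification is applied to \emph{vertex figures}, which do have a controlled vertex count, and the actual contradictions come from concrete facet-counting arguments: in case (a), one shows the excess-two vertex $u$ is simple in both $F_1,F_2$, has a unique neighbour $v$ outside $F_1\cup F_2$, enumerates the $d+1$ facets through $u$ (using a parity argument to exclude $v$ from the facets meeting $F_1$ in a ridge) and finds that at most two of them can contain $v$, whereas the edge $uv$ needs $d-1$; in case (b), one shows the outside nonsimple vertex is adjacent to every vertex of $K$, so $K\cup\{v\}$ spans a simplex ridge $R$, and then counts the facets through a distinguished edge at $v$ against the $d-2$ vertices of $K$. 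None of this (nor any substitute for it) appears in your ``$P/K$ polygon analysis'', which is only described, never performed --- and you concede as much in your final paragraph. As written, the hardest part of the statement is asserted, not proved.

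There is also a smaller gap in your quadrilateral endgame. For Shephard's property of $F$ you verify that each vertex $v\in R\cap K$ keeps its neighbours in $R$ and loses its neighbour in $K\setminus R$, but you never account for the two neighbours of $v$ in $F_2\setminus K$: if either lay outside $F$, the vertex $v$ would have two neighbours outside $F$ and Shephard's property would fail. One must show that $F\cap F_2$ is a ridge of $P$ (using \cref{lem:facetintersection}, \cref{lem:Excessd-1Facets0}, and the observation that $F\supseteq K$ is impossible since $R\cup K$ spans $F_1$), from which it follows that $F$ contains both neighbours of $v$ in $F_2\setminus K$; this is exactly the step the paper performs via ``$F$ intersects each $F_1$ and $F_2$ at a ridge''. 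This part is fillable, but it is missing from your argument.
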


\begin{proof} Recall that $d$ must be odd. Let $K$ denote the subridge $F_1\cap F_2$. We consider two cases: either (1) the excess degree of $P$ comes only from the nonsimple vertices in $K$, or (2) the excess degree of $P$ comes from the nonsimple vertices in $K$ and a nonsimple vertex $v$ outside $K$. Note that, in the first case, $K$ must have $d-2$ vertices, unless $d=5$, in which case it could have $d-1$ vertices. Indeed, suppose $K$ has $d-1$ vertices. From \cref{lem:Non-DisjointFacets} it  follows that every vertex in $K$  has degree $d+1$ in $P$ and that $K$ is a simple polytope, but the last assertion is only possible if $K$ is 2-dimensional, i.e. $d-3=2$. So  in this particular case $d= 5$ and $K$ is a quadrilateral.

{\bf Case 1.} The excess degree of $P$ comes only from the (nonsimple) vertices in $K$.

We will first rule out this case when $K$ has $d-2$ vertices. Then it is a simplex, and  there exists  a vertex in $K$, say $u$, with degree $d+2$ in $P$, while every other vertex in $K$ has degree $d+1$ in $P$ and is simple in both $F_1$ and $F_2$. If $u$ were nonsimple in $F_1$ or $F_2$, then $1=\xi(F_1)\ge d-3$ or $1=\xi(F_2)\ge d-3$ by the Excess Theorem. So $u$ is simple in $F_1$ and $F_2$. Denote by $v$ the unique neighbour of $u$ outside $F_1\cup F_2$.

We consider the set $\mathcal{F}_u$ of facets containing $u$. Since $u$ is simple in $F_1$, there are exactly $d-1$ facets in $\mathcal{F}_u$ intersecting $F_1$ at a ridge. Out of these $d-1$ facets, there are $d-3$ facets which miss one vertex in $K$; consider one such facet $F$. The facet $F$ must also intersect $F_2$ at a ridge, otherwise the excess of $P$ would be much larger. Consequently, every vertex in $(F\cap K)\setminus \{u\}$ (of which there are $d-4$) has excess one in $F$, i.e. has $d$ neighbours in $F$. If $v$ belonged to $F$, then $u$ would have $d+1$ neighbours in $F$ and $\xi(F)=d-4+2$. However, since $d-1$ is even, $\xi(F)$ cannot be odd. Thus, $v\not \in F$.

Suppose there is a facet $G$ in $\mathcal{F}_u$ which intersects $F_1$ at $K$; by \cref{lem:Excessd-1Facets0} the dimension of any such intersection $F_1\cap G$ cannot be smaller than $d-3$. For any vertex in $K$ other than $u$ has precisely two neighbours in $F_2\setminus K$,  both of which then must be in $G$. This means that $G$ must be $F_2$.  Consequently, there are exactly $d+1$ facets in $\mathcal{F}_u$: $F_1$, $d-1$ facets intersecting $F_1$ at a ridge, and $F_2$.  Hence there are at most two facets in $\mathcal{F}_u$ containing the vertex $v$. But there must be exactly $d-1$ facets in $\mathcal{F}_u$ containing $v$. So $d-1\le 2$.

For the particular case of $d=5$ and $K$ having $d-1$ vertices, every vertex in $K$ has excess degree one, so we can choose $u$ to be any of them.  Let $w\ne u$ be a vertex in $K$  (this step requires $d>3$). Consider a facet $F$ containing $w$ but not $u$.  Then from \cref{lem:facetintersection} and \cref{lem:Excessd-1Facets0} it ensues that $F$ intersects each $F_1$ and $F_2$ at a ridge. Thus $F$ contains every neighbour of $w$ in $F_1\setminus K$ and $F_2\setminus K$. The same argument applies to every vertex $\ne u$ in $F\cap K$. This in turn implies that  $F$ has Shephard's property.

We next rule out the second case for $d>3$.

{\bf Case 2.} Every vertex in $K$ has degree $d+1$ in $P$ and is simple in both $F_1$ and $F_2$, and there is a nonsimple vertex $v\not \in K$ of degree $d+1$.

If all the neighbours of $v$ are simple in $P$, \cref{lem:vertex-all-neighbours-simple}  ensures that the vertex figure of $P$ at $v$ is a simple $(d-1)$-polytope with $d-1+2$ vertices, which implies $d=2$ or 3 by \cref{lem:CountSimpleP}. So $v$  must be adjacent to at least one vertex $u\in \ver K$, and thus belongs to either $F_1\setminus K$ or $F_2\setminus K$, say $F_1\setminus K$.

We claim that every vertex $w\ne u$ in $K$ (which do exist since $d>3$) must be adjacent to $v$. Suppose otherwise. \cref{rmk:two-vertices} gives us a facet $F$ containing $v$ but not $u$. Since no vertex has excess degree two or more, $F\cap F_1$ cannot be a vertex or an edge. Suppose $F\cap F_1$ is a subridge; then it is a simplex and must contain $v$ and all $d-3$ vertices from $K\setminus\{u\}$. Clearly then, $v$ is adjacent to all $d-3$ such vertices.
The remaining possibility is that $F$  intersects $F_1$ at a ridge. Since $F_1$ cannot have excess degree precisely one, $v$ must be simple in $F_1$.
Thus there is only one such facet $F$, thereby implying that there are at least $d$ facets containing the edge $uv$. The edge $uv$ is contained in exactly $d-2$ $(d-2)$-dimensional faces of $F_1$ (which are of courses ridges in $P$). Thus there are at least two facets $G_1$ and $G_2$ containing the edge $uv$ intersecting $F_1$ at a $(d-3)$-face, which is a simplex.
Since $w\not \in G_i$ ($i=1,2$), each $G_i$ misses exactly one vertex of $K$, namely $w$. This in turn  implies that $G_1\cap F_1=G_2\cap F_1=(K\setminus\{w\})\cup \{v\}=:K'$. But then the intersection of each $G_i$ with $F_2$ must necessarily be the unique ridge of $F_2$ containing $K\setminus\{w\}$, a contradiction. Let $R$ be the ridge (of $P$) in $F_1$ containing $K\cup \{v\}$. Since $v$ is adjacent to every vertex in $K$ and since $R$ is a simple polytope, $R$ must be a $(d-2)$-simplex.

Consider the other facet $F_3$ containing the ridge $R$. Note that every vertex in $K$ is simple in $F_3$, and thus, $v$ must be simple in $F_3$. Let $e_v$ denote the unique edge incident with $v$ not contained in $F_1\cup F_3$ and let $\mathcal{F}_e$ denote the set of facets containing $e$. Any facet $F\in\mathcal{F}_e$  must intersect $F_1$ at a $(d-3)$-face or at a ridge; in any case, it must miss precisely one vertex of $K$, say $w_F$.   In this case, $F$ intersects $F_2$ at a ridge, since $d>3$ and $F$ cannot intersect $F_2$ at a $(d-3)$-face. It thus follows that for each $w_F$ there exists exactly one such facet $F$. But there are only $d-2$ vertices in $K$ and we need $d-1$ facets in $\mathcal{F}_e$. This contradiction completes the proof of the lemma.
\end{proof}

\begin{lemma}\label{lem:degree-d+1-ridges} Any semisimple $d$-polytope, in which every vertex has degree at most $d+1$, is actually simple.
\end{lemma}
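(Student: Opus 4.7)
The plan is to argue by contradiction. By \cref{lem:semilowdim} we may assume $d \geq 5$. If $P$ were not simple, pick a nonsimple vertex $u$; since $\deg u \geq d+1$ (as $u$ is nonsimple) and $\deg u \leq d+1$ by hypothesis, in fact $\deg u = d+1$ exactly. The natural object to study will then be the vertex figure $Q := P/u$, a $(d-1)$-polytope with $d+1$ vertices. Since $u$ is nonsimple, $Q$ is not a $(d-1)$-simplex.

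The main step is to show that $Q$ is super-Kirkman. Indeed, any two distinct facets of $Q$ come from two distinct facets $F_1, F_2$ of $P$ both containing $u$. Such facets are nondisjoint, so by the semisimplicity hypothesis they meet in a ridge of $P$ containing $u$; that ridge translates to a $(d-3)$-face of $Q$, which is a ridge of $Q$. Hence every pair of facets of $Q$ meets in a ridge, i.e.~$Q$ is super-Kirkman.

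To reach the contradiction, I will invoke the classification of $(d-1)$-polytopes with $(d-1)+2$ vertices (see Gr\"unbaum \cite[\S6.1]{Gru03}): any such polytope that is not a simplex is combinatorially an $r$-fold pyramid over a direct sum $\Delta_a \oplus \Delta_b$ of two simplices with $a,b \geq 1$ and $a+b+r=d-1$. Super-Kirkmanness passes down from a pyramid to its base, because two pyramids over facets $F, F'$ of the base intersect in the pyramid over $F \cap F'$, and this is a ridge of the whole pyramid if and only if $F \cap F'$ is a ridge of the base (the intersection of the base with any other facet is automatically a ridge). Therefore $\Delta_a \oplus \Delta_b$ itself would have to be super-Kirkman. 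But its dual is $\Delta_{a,b} = \Delta_a \times \Delta_b$, in which two vertices $(x,y), (x',y')$ with $x \ne x'$ and $y \ne y'$ are not adjacent; so $\Delta_{a,b}$ is not 2-neighbourly and $\Delta_a \oplus \Delta_b$ is not super-Kirkman. This contradiction completes the argument.

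The routine verifications are the appeal to the classification of polytopes with dim$+2$ vertices and the lemma that a pyramid is super-Kirkman iff its base is; both are straightforward. The only real conceptual step is the translation of the semisimplicity of $P$ into the super-Kirkman property of the vertex figure, and it is here that the degree bound $\deg u \leq d+1$ is crucial, since it forces $Q$ to be so small ($d+1$ vertices) that the classification leaves no room for the required intersection pattern.
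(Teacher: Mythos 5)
Your proof is correct, but it takes a genuinely different route from the paper. The paper argues by induction on $d$: it shows that any facet of such a polytope is again semisimple (with the inherited degree bound), by taking two ridges of a facet meeting in a $(d-4)$-face and deriving a contradiction from a local count of the degree of a vertex in that intersection; simplicity of all facets then yields simplicity of $P$ via \cref{prop:CharPSimple}. You instead localise at a single nonsimple vertex $u$, which the degree hypothesis forces to have degree exactly $d+1$, and pass to the vertex figure $P/u$, a $(d-1)$-polytope with $(d-1)+2$ vertices. Your key translation -- that semisimplicity of $P$ makes every pair of facets of $P/u$ meet in a ridge, i.e.\ $P/u$ is super-Kirkman -- is sound, since facets of $P/u$ correspond to facets of $P$ through $u$, any two of which are nondisjoint and hence meet in a ridge of $P$ containing $u$. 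You then kill this with the Gale-diagram classification of polytopes with two more vertices than their dimension (the dual form of \cref{rmk:dplus2Facets}), the easy fact that the super-Kirkman property descends from a pyramid to its base, and the equivalence (noted in \S4.1) between being super-Kirkman and having a $2$-neighbourly dual, which fails for $\Delta_a\oplus\Delta_b$ with $a,b\ge1$ because $\Delta_{a,b}$ has nonadjacent vertices. What each approach buys: the paper's proof is elementary and self-contained, needing only edge counting; yours is induction-free, explains structurally why the bound $d+1$ is the right one (it pins the vertex figure down to $(d-1)+2$ vertices, where no super-Kirkman non-simplex exists), and fits naturally with the vertex-figure arguments the paper already uses in \cref{lem:Excessd-1Facets0,lem:excess-d-1-ridges}. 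Two minor remarks: your initial reduction to $d\ge5$ via \cref{lem:semilowdim} is harmless but unnecessary, as your argument works verbatim for $d=3,4$; and when $F\cap F'=\emptyset$ in your pyramid-descent step, the two pyramidal facets meet only in the apex, which is likewise not a ridge, so the descent claim holds in that degenerate case too.
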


\begin{proof} The case $d\le4$ follows from \cref{lem:semilowdim}. We will show first that  any  facet $F$ is also semisimple. The result then follows by induction on $d$. We may assume now that $d\ge5$.

Suppose otherwise; that is there are two $(d-2)$-faces $R_1$ and $R_2$ in $F$ intersecting at a $(d-4)$-face $J$. {(If $J$ had lower dimension, every vertex in $J$ would have excess at least two in $F$.) Choose any $u \in J$. Then $u$ has excess one in $F$ and so is  nonsimple  in $P$;  \cref{lem:simpleNonsimpleVertex}  ensures that $u$
}
is nonsimple in any facet containing it. Denoting by $F_i$ the other facet containing $R_i$ ($i=1,2$), we see that $u$ is nonsimple in $F$, $F_1$ and $F_2$,  and it has only one neighbour outside $F$, so $u$ must be nonsimple in each $R_i$. But then, the degree of $u$ in $F$ is the sum of the degree of $u$ in $R_1$ ($d-1$) plus the edges of $u\in R_2\setminus J$ (two), a contradiction.
\end{proof}

\begin{lemma}\label{lem:excess-d-1-ridges} Let $P$ be a semisimple $d$-polytope with excess $d-1$. Then $d= 5$, and there is a unique nonsimple vertex in $P$, whose vertex figure is combinatorially equivalent to $\Delta_{2,2}$.  In particular, $P$ is  a Shephard polytope.
\end{lemma}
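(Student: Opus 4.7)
My plan is to pass through three stages. First I would handle the basic reductions. The identity $\xi(P) = 2f_1 - d f_0$ shows $d-1$ must have the same parity as $d f_0$, which forces $d$ to be odd; and \cref{lem:semilowdim} shows a semisimple $3$-polytope is simple, ruling out $d=3$, so $d \geq 5$. Because $P$ is nonsimple, \cref{lem:degree-d+1-ridges} guarantees a vertex of excess at least $2$. For any facet $F$ through a nonsimple vertex, \cref{lem:facets-intersecting-at-ridges} says $F$ itself is nonsimple; \cref{lem:facet-not-all-excess} gives $\xi(F) < d-1$; the Excess Theorem (\cref{thm:excess}) gives $\xi(F) \geq d-3$; and since $d-1$ is even, $\xi(F) = 2f_1(F) - (d-1)f_0(F)$ is always even, pinning $\xi(F) = d-3$. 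Applying \cref{thm:excess-d-2-full-story} to $F$ (as a $(d-1)$-polytope of excess $d-3$), either (i) $F$ has a unique nonsimple vertex of excess $d-3$, or (ii) $F$ has $d-3$ vertices of excess one forming a $(d-4)$-simplex $K$ that is the intersection of two facets of $F$.

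The main obstacle will be ruling out case (ii), which I plan to do via the quotient $P/K$. Since $\dim K = d-4$, $P/K$ is a $3$-polytope. Semisimplicity of $P$ translates directly into $P/K$ being super-Kirkman: any two facets of $P$ containing $K$ are nondisjoint, hence meet in a ridge of $P$ containing $K$, which becomes an edge of $P/K$ shared by the corresponding two facets of $P/K$. A super-Kirkman $3$-polytope has every pair of its facets adjacent in the dual, so its dual's graph is $K_n$ for some $n$; as a $3$-polytope the dual has planar graph, forcing $n \leq 4$. Hence $P/K$ is a tetrahedron, all of whose facets are triangles. On the other hand, \cref{lem:Excessd-2Facetsd-3}(iv), applied to $F$ with $K$ playing the role of the $(d-4)$-face, produces exactly four facets of $F$ containing $K$; hence $F/K$ has four edges and is a quadrilateral. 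The facet of $P/K$ associated with $F$ should then be simultaneously triangular (as a facet of the tetrahedron) and quadrilateral, the desired contradiction.

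With case (ii) eliminated, case (i) holds for every facet containing a nonsimple vertex of $P$; in particular no facet contains two distinct nonsimple vertices, so the nonsimple vertices of $P$ form an independent set. For any nonsimple $v$, all its neighbours are simple in $P$, so \cref{lem:vertex-all-neighbours-simple} shows $P/v$ is simple, and as before $P/v$ is super-Kirkman. Supposing for contradiction that $P$ has at least two nonsimple vertices gives $\xi(v) \leq d-2$, so $P/v$ has at most $2d-2$ vertices; \cref{lem:CountSimpleP}(i)--(ii) then leaves only the $(d-1)$-simplex (forcing $\xi(v) = 0$, contradicting nonsimplicity) or the simplicial $(d-1)$-prism (whose two opposite simplex facets are disjoint, so not super-Kirkman for $d \geq 5$), both impossible. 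Thus $v$ is the unique nonsimple vertex, $\xi(v) = d-1$, and $P/v$ is a simple super-Kirkman $(d-1)$-polytope with $2d-1$ vertices. By \cref{lem:CountSimpleP} the only way a simple $(d-1)$-polytope can have $2d-1$ vertices is $d = 5$ and $P/v = \Delta_{2,2}$, and one checks directly that $\Delta_{2,2}$ is super-Kirkman (its six triangular-prism facets meet pairwise in a $2$-face). Shephard's property then follows by choosing any facet $F$ that does not contain $v$ (available by \cref{rmk:two-vertices}): every vertex of $F$ is simple in $P$ and so has exactly one neighbour outside $F$.
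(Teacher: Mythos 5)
Your proposal is correct, but its core is genuinely different from the paper's argument. Both proofs start the same way (every nonsimple facet $F$ has excess exactly $d-3$ by \cref{lem:facets-intersecting-at-ridges}, \cref{lem:facet-not-all-excess}, \cref{thm:excess} and parity, and then \cref{thm:excess-d-2-full-story} applies to $F$), and both end the same way (a unique nonsimple vertex has a simple vertex figure with $2d-1$ vertices, so \cref{lem:CountSimpleP} forces $d=5$ and $\Delta_{2,2}$). The middle is where you diverge: the paper runs a long case analysis on the number of nonsimple vertices of $P$ (two, three, $d-3$, $d-2$, $d-1$), using truncation arguments, facet counting, \cref{lem:simpleVertexOutside} and \cref{lem:degree-d+1-ridges}. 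You instead kill alternative (ii) of \cref{thm:excess-d-2-full-story} once and for all via the face figure $P/K$: semisimplicity makes every pair of facets of $P$ through $K$ meet in a ridge containing $K$, so the $3$-polytope $P/K$ is super-Kirkman, hence (its dual having a complete planar graph) a tetrahedron, while \cref{lem:Excessd-2Facetsd-3}(iv) applied inside $F$ makes the corresponding $2$-face $F/K$ a quadrilateral -- a clean contradiction; then, with no facet containing two nonsimple vertices, the vertex figure at any nonsimple vertex is simple and super-Kirkman, so if there were two or more nonsimple vertices it would have at most $2d-2$ vertices and would have to be a simplex or a simplicial prism, both impossible. I checked the details and see no gap; the one extra ingredient is the standard fact (not developed in the paper, which only uses vertex figures and underfacets) that the face figure $P/K$ at a positive-dimensional face is a polytope whose face lattice is the interval $[K,P]$, with the facet corresponding to $F$ being $F/K$. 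What each approach buys: yours is substantially shorter and more uniform in $d$, at the price of importing the quotient machinery; the paper's stays within its elementary toolkit and extracts more explicit information about the intermediate configurations it excludes. Two minor remarks: your appeal to \cref{lem:degree-d+1-ridges} in the opening paragraph is never actually used, and the verification that $\Delta_{2,2}$ is super-Kirkman is a consistency check rather than a needed step.
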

\begin{proof} Recall that the excess can equal $d-1$ only if $d$ is odd. Note that $d>3$, since \cref{lem:semilowdim} gives at once that all 3-dimensional semisimple polytopes are simple. By \cref{prop:CharPSimple,thm:excess}, we know that there exists a facet  with excess at least $d-3$, and by \cref{lem:facet-not-all-excess} that this facet (and every nonsimple facet) has excess exactly $d-3$; note that since $d-1$ is even, the  excess of a facet cannot be $d-2$. It follows that every nonsimple ridge has excess $d-4$. We also know that any such nonsimple facet $F$ contains  either   $d-3$ nonsimple vertices, each with excess one in $F$, or has a unique nonsimple vertex, with excess $d-3$ in $F$. In the first case, $P$ has either $d-3, d-2$ or $d-1$ nonsimple vertices. In the second case, $P$ has at most three nonsimple vertices.

We begin with the important observation that when a facet $F$ contains all the nonsimple vertices of $P$, then $P$ contains at most two nonsimple vertices. From \cref{lem:simpleVertexOutside} it follows that every nonsimple vertex in $F$ is adjacent to at least two vertices outside $F$. The conclusion is now clear, since $F$ has excess $d-3$ and $P$ has excess $d-1$.

We consider first the case that $P$ contains a unique nonsimple vertex $u$; in fact, this is the only case which actually occurs. The vertex figure $P/u$ of $P$ at $u$ is a simple $(d-1)$-polytope with $2d-1$ vertices by \cref{lem:vertex-all-neighbours-simple}), and then \cref{lem:CountSimpleP}(ii) forces $d\le5$. Since the case $d=3$ has been excluded, we have $d=5$, and the vertex figure $P/u$  has $2d-1\ge3(d-1)-3$ vertices  by virtue of \cref{lem:CountSimpleP}(ii). By \cref{lem:CountSimpleP}(iii), $P/u$ is $\Delta_{2,2}$, which is depicted in \cref{fig:polytopes}(c).

Henceforth we assume that $P$ has at least two nonsimple vertices. In this case,  any nonsimple vertex must be adjacent to at least one other nonsimple vertex. If there are only two nonsimple vertices, and they are not adjacent, one of them will have excess at most $\h(d-1)$, and truncating it will give us a new simple facet with strictly less than $\h(3d-1)$ vertices. If there are three or more nonsimple vertices, all of them will have excess at most $d-3$, so truncating one which is adjacent only to simple vertices would give us a new simple facet with strictly less than $2(d-1)$ vertices. These are all impossible by \cref{lem:CountSimpleP}, if $d>3$.

If $P$ contains exactly two nonsimple vertices, say $u$ and $v$, they must be connected by an edge, and hence contained in some facet $F$.
According to \cref{thm:excess-d-2-full-story}, $F$ contains exactly $d-3$ vertices with excess one. So  again $d-3=2$.
Consider a facet $F_u$ containing $u$ but not $v$, and a facet $F_v$ containing $v$ but not $u$ (recall ~\cref{rmk:two-vertices}).
By \cref{lem:facets-intersecting-at-ridges}, $F_u$ is not simple and so has excess $d-3=2$, every vertex in $F_u$ sends exactly one edge outside  $F_u$. The vertex figure $F_u/u$ of $F_u$ at $u$ is a simple 3-polytope with six vertices; that is, it is a simplicial 3-prism. This implies that the two 3-faces $R_1$ and $R_2$ of $F_u$ arising from the opposite simplices of $F_u/u$  intersect only at $u$. Consequently, the intersection of the other two facets of $P$ containing the ridges $R_1$ and $R_2$ has dimension at most one, a contradicting semisimplicity.

Now consider the case that $P$ contains exactly three nonsimple vertices, $v_1,v_2$ and $v_3$. Without loss of generality, we can assume that  $v_2$ is adjacent to both $v_1$. Choose any facet $F$ containing the edge joining them, and note that $F$ has excess $d-3$ and does not contain $v_3$. Thanks to \cref{thm:excess-d-2-full-story}, we have $d-3=2$, each $v_i$ has excess one in the appropriate facet, and the total excess is four. So two of the nonsimple vertices have degree six in $P$, and one has degree seven.   Without loss of generality, we can assume that $v_1$ has degree six, and \cref{lem:simpleVertexOutside} tells us that it is adjacent to only one vertex outside $F$, which must be nonsimple. Thus $v_1$ is also adjacent to $v_3$. Now $v_1$ has exactly six neighbouring vertices, and every facet containing it is nonsimple. It follows that $v_1$ is contained in exactly six facets, one for each set of five of its neighbours. In particular, there is a facet (in fact there are four such facets) containing $v_1,v_2$ and $v_3$, contrary to our previous observation.

If $P$ contains exactly $d-3$ nonsimple vertices, they must all belong to the same facet, a situation whose impossibility has already been demonstrated.

We now assume that $P$ contains exactly $d-2$ nonsimple vertices. Then $d-3$ of them will constitute a simplex subridge $K$, contained in a facet $F$ with excess $d-3$, and at least $d-4$ of them will have excess one in $P$. Denote by $w$ the nonsimple vertex outside $F$. Each of these $d-4$ vertices will be adjacent to a unique vertex outside $F$, which must be nonsimple, i.e. they are all adjacent to $w$. Now consider a ridge $R$ such that $K\subset R\subset F$, and let $G$ be the other facet containing $R$. Then $G$ must contain $w$ and $K$. But we have already seen that no facet can contain every nonsimple vertex.

Finally, if $P$ contains exactly $d-1$ nonsimple vertices, they must all have excess 1, and the impossibility of this is given by \cref{lem:degree-d+1-ridges}.
\end{proof}

\begin{theorem}\label{thm:excessdecomp} Let $P$ be $d$-polytope with  excess degree  $d-1$, where  $d>3$. Then $d=5$ and either

\begin{enumerate}[(i)]
\item there is a single vertex with excess four, {which is the intersection of three facets}, and whose vertex figure is $\Delta_{2,2}$; or
\item there are two vertices with excess two, the edge joining them is the intersection of two facets and its underfacet is either $\Delta_{1,1,2}$ or $\Gamma_{2,2}$; or
\item there are four vertices each with excess one, which form a quadrilateral 2-face which is the intersection of two facets, and whose underfacet is the tesseract  $\Delta_{1,1,1,1}$.
\end{enumerate}

In all cases $P$ is a Shephard polytope.
\end{theorem}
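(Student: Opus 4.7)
Since the parity of $\xi(P)$ is determined by $d$ (as noted after the excess theorem), the requirement $\xi(P) = d-1$ with $d > 3$ forces $d$ to be odd and hence $d \geq 5$. My approach is a case split based on the minimum dimension $j$ of the intersection of two nondisjoint facets of $P$. By \cref{lem:facetintersection}, $j \in \{0, d-3, d-2\}$, or $j = 1$ with $d = 5$. The possibility $j = 0$ is eliminated at once by \cref{lem:Excessd-1Facets0}, which would force $d = 3$. The three remaining scenarios will correspond to (i), (ii), (iii).

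Suppose first that every pair of nondisjoint facets meets in a ridge, so that $P$ is semisimple. Then \cref{lem:excess-d-1-ridges} provides $d = 5$, a unique nonsimple vertex $u$ with vertex figure $\Delta_{2,2}$, and the Shephard property, giving case~(i). To justify the ``intersection of three facets'' claim, I would use that the six facets of $\Delta_2 \times \Delta_2$ split into two triples of the form (edge of one $\Delta_2$ factor) $\times \Delta_2$, and each such triple has empty common intersection (three edges of a triangle share no common point). The three corresponding facets of $P$ through $u$ therefore meet precisely at $\{u\}$, and by \cref{lem:Excessd-1Facets0} no pair of facets meets only at $\{u\}$, so three is the minimum.

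If some pair of facets $F_1, F_2$ satisfies $\dim(F_1 \cap F_2) = 1$, then \cref{lem:Excess4Facets1} supplies $d = 5$ and the Shephard property. Applying \cref{lem:Non-DisjointFacets}(i) with $j = 1$ shows that each of the two endpoints of the edge $F_1 \cap F_2$ has excess at least $d - 3 = 2$, and the total excess $4$ forces both endpoints to have excess exactly $2$ with all other vertices simple. Each endpoint has degree $7$, contributing $6$ edges off the edge, for a total of $12$. By \cref{lem:face-all-neighbours-simple} the underfacet is a simple $4$-polytope on $12$ vertices, which by \cref{lem:CountSimpleP}(vi) must be $\Delta_{1,1,2}$ or $\Gamma_{2,2}$, giving case~(ii). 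If instead some pair satisfies $\dim(F_1 \cap F_2) = d-3$, \cref{lem:Excessd-1Facetsd-3} supplies $d = 5$, that $K := F_1 \cap F_2$ is a quadrilateral, and the Shephard property. The equality cases of \cref{lem:Non-DisjointFacets} then force every vertex of $K$ to have excess exactly $1$ with precisely two neighbours in each of $K$, $F_1 \setminus K$, $F_2 \setminus K$, while all other vertices of $P$ are simple. Each vertex of $K$ thus contributes four outgoing edges, so the underfacet of $K$ is a simple $4$-polytope on $16$ vertices.

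\emph{The main obstacle} is identifying this $16$-vertex underfacet as specifically the tesseract $\Delta_{1,1,1,1}$. My plan is first to compute the vertex figure of each vertex of $K$: it is a $4$-polytope on $6 = d + 2$ vertices, so by \cref{rmk:dplus2Facets} it is a multifold pyramid over some $\Delta_{m,n}$, and matching the local degrees (four neighbours outside $K$, two neighbours within $K$) narrows this to the $2$-fold pyramid over $\Delta_{1,1}$. From this vertex figure one reads off that exactly four facets of $P$ contain $K$, and that the four outgoing edges at each vertex of $K$ are cyclically connected by triangular $2$-faces of $P$ into a quadrilateral in the underfacet. Gluing these four local quadrilaterals according to the cyclic structure of $K$ produces the graph of $\Delta_1 \times \Delta_{1,1} \times \Delta_1 = \Delta_{1,1,1,1}$, which, since a simple polytope is determined by its graph, identifies the underfacet as the tesseract.
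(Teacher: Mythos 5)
Your overall architecture — split on the dimension of a non-ridge intersection of two facets via \cref{lem:facetintersection}, kill the vertex case with \cref{lem:Excessd-1Facets0}, get case (i) from \cref{lem:excess-d-1-ridges}, case (ii) from \cref{lem:Excess4Facets1} plus a 12-vertex simple underfacet and \cref{lem:CountSimpleP}(vi), and case (iii) from \cref{lem:Excessd-1Facetsd-3} — is exactly the paper's proof, and your treatment of (i) and (ii) is sound (your extra justification that the vertex in (i) is an intersection of three but not two facets, read off from the facet structure of $\Delta_{2,2}$, is a nice touch the paper leaves implicit).

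The genuine gap is the identification of the $16$-vertex underfacet in case (iii) as the tesseract. First, \cref{rmk:dplus2Facets} classifies polytopes with $d+2$ \emph{facets}, not $d+2$ vertices; a $4$-polytope with $6$ vertices need not be a multifold pyramid over some $\Delta_{m,n}$ (the cyclic polytope $C(6,4)$ and the free sum of two triangles are counterexamples), so your first step rests on a misquoted tool, and the proposed narrowing to the $2$-fold pyramid over a quadrilateral ``by matching local degrees'' is not an argument: the vertex figure's combinatorics is determined by which $2$-faces and facets of $P$ contain the vertex, not merely by how its neighbours split between $K$, $F_1\setminus K$ and $F_2\setminus K$. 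Second, even granting the vertex figure, the ``gluing'' step is precisely where the work lies: the four local quadrilaterals account for only $16$ of the $32$ edges of the underfacet, and the remaining edges, which come from $2$-faces of $P$ containing an edge of $K$, must be shown to connect the local pictures in the untwisted way; nothing you say excludes other simple $4$-polytopes with $16$ vertices (e.g.\ the prism over the $5$-wedge $J_3$, or the product of two quadrilaterals), and \cref{lem:CountSimpleP} cannot help since it stops at $3d=12$ vertices. The paper avoids all of this by a different device: replace $P$ by the convex hull of $K$ and its underfacet $G$, note that each ridge $R_i=F_i\cap G$ is a simple $3$-polytope with $8$ vertices, that each $F_i=\mathrm{conv}(R_i\cup K)$ is a simple $4$-polytope with $12$ vertices and hence $\Delta_{1,1,2}$ or $\Gamma_{2,2}$ by \cref{lem:CountSimpleP}(vi), rule out $\Gamma_{2,2}$ because deleting a quadrilateral face from it does not leave a polytope graph, and conclude that each $R_i$ is a cube and $G=\mathrm{conv}(R_1\cup R_2)$, being simple, is the tesseract. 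You would need either to adopt such an argument or to supply the missing classification and gluing analysis to complete your route.
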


\begin{proof}  By \cref{lem:facetintersection} any two nondisjoint facets in $P$ intersect either at a vertex, an edge with $d=5$, a subridge, or a ridge. The first possibility is excluded by \cref{lem:Excessd-1Facets0}. If there are two facets which intersect in an edge, and $d=5$, then the underfacet of the edge must be a simple 4-polytope with 12 vertices, so by \cref{lem:CountSimpleP}(vi) it is either $\Delta_{1,1,2}$ or $\Gamma_{2,2}$. This is case (ii).

If two facets intersect in a subridge $K$, \cref{lem:Excessd-1Facetsd-3} ensures that $d=5$ and $K$ is a quadrilateral face. Let $G$ be an underfacet of $K$,  replace $P$ by the polytope which is the convex hull of $G$ and $K$, and denote by $F_1$ and $F_2$ the corresponding facets in this smaller polytope. Each of the four vertices in $K$ is incident with two edges in $F_1$ and with two edges in $F_2$. Thus both ridges $R_i=F_i\cap G$ have eight vertices, and are simple, so each must be either a cube or a 5-wedge. However each $F_i$ is the convex hull of $R_i\cup K$, and is simple, so by \cref{lem:CountSimpleP}(vi) again each is either $\Delta_{1,1,2}$ or $\Gamma_{2,2}$. If we remove an quadrilateral face from a copy of $\Gamma_{2,2}$, the resulting graph is not the graph of a polytope. So for each $i$, $F_i$ is a copy of   $\Delta_{1,1,2}$, and $R_i$ is a cube. Then $G$, being the convex hull of $R_1$ and $R_2$, and being simple, must be a tesseract. This is Case (iii).

Finally, if every pair of facets intersects in a ridge or the empty set, \cref{lem:excess-d-1-ridges} guarantees that we are in Case (i).
\end{proof}

Examples for cases (i), (ii) and (iii) are the pyramid over $\Delta_{2,2}$;  $M_{3,2}$ and  $B_{5}$; and  $A_{5}$ respectively. All of these have simple vertices, so repeated truncation leads to more examples.

\section{Characterisation of decomposable $d$-polytopes with $2d+1$ vertices}

The following known result, see \cite[Theorem 7.1, page 39]{Kal79} or \cite[Theorem 9]{PrzYos16}, motivates this section.

\begin{proposition}\label{prop:2ddecomp}
The only decomposable $d$-polytope with $2d$ or fewer vertices is the $d$-prism.
\end{proposition}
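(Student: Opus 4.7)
The plan is to analyse any Minkowski decomposition $P = Q + R$ of a decomposable $d$-polytope $P$ with $f_0(P) \le 2d$, and show that it must reduce to a prism decomposition $P = R + [0,u]$ with $R$ a $(d-1)$-simplex. The key input I will use is the standard vertex-factorisation for Minkowski sums: each vertex $v$ of $P$ admits a unique expression $v = v_Q + v_R$ with $v_Q \in \ver Q$ and $v_R \in \ver R$, so $f_0(P) \le f_0(Q)\, f_0(R)$, with equality when the direction spaces of $\mathrm{aff}(Q)$ and $\mathrm{aff}(R)$ are complementary in $\r^d$ (in which case $P$ is combinatorially the Cartesian product $Q \times R$). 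I will assume without loss of generality $1 \le \dim Q \le \dim R$, so that $\dim Q + \dim R \ge d$.

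First I would rule out $\dim Q \ge 2$. In the complementary sub-case $\dim Q + \dim R = d$ with $k := \dim Q \ge 2$, the vertex-factorisation gives $f_0(P) = f_0(Q)\, f_0(R) \ge (k+1)(d-k+1)$; this is minimised over $2 \le k \le d-2$ at the endpoints, yielding $3(d-1) > 2d$ whenever $d \ge 4$, while $d = 3$ would force $\dim R \le 1 < k$, a contradiction. In the non-complementary sub-case, the uniqueness of the primary indecomposable Minkowski decomposition (essentially Shephard) allows me to extract any common segment summand shared by $Q$ and $R$ arising from overlapping edge directions, and iteratively reduce to either the complementary sub-case (ruled out above) or to $\dim Q = 1$.

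With $\dim Q = 1$, I write $Q = [0,u]$ so that $P = R + [0,u]$. Since $\dim P = d$, either $\dim R = d-1$ with $u \notin \mathrm{aff}(R)$, or $\dim R = d$; in the latter, since $P$ inherits edges parallel to $u$ from the segment summand and $R$ is itself a summand of $P$, I would strip off a further segment from $R$ and iterate until the remaining $(d-1)$-dimensional summand replaces $R$. In the former case, $P$ is combinatorially the prism over $R$ with exactly $f_0(P) = 2 f_0(R)$ vertices, and the hypothesis $f_0(P) \le 2d$ forces $f_0(R) \le d$, so $R$ is a $(d-1)$-simplex and $P$ is a simplicial $d$-prism, as claimed. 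The hard part will be the non-complementary sub-case in the dimensional reduction: I must show that whenever $Q$ and $R$ share edge directions, such a direction can be pulled out as a common segment summand and transferred between $Q$ and $R$ without altering $P$---this is where the existing arguments in the cited works of Kallay and Przeslawski--Yost deploy a careful analysis of the common refinement of the normal fans of $Q$ and $R$.
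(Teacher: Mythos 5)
Your argument has a genuine gap, and it sits exactly where the real difficulty of the statement lies. Note first that the paper does not prove \cref{prop:2ddecomp} at all: it quotes it from \cite[Theorem 7.1]{Kal79} and \cite[Theorem 9]{PrzYos16} (the inequality part also being \cite[Prop.~6]{Yos91}), so your attempt has to stand entirely on its own. What you actually establish is only this: if $P=Q+R$ with the direction spaces of $\operatorname{aff}(Q)$ and $\operatorname{aff}(R)$ complementary, then $f_0(P)=f_0(Q)f_0(R)$ forces $\dim Q=1$ and $P$ a simplicial prism. The reduction of the general (non-complementary) case to this one is asserted, not proved, and the assertions it rests on are false. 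There is no ``uniqueness of the primary indecomposable Minkowski decomposition'': decompositions into indecomposables are wildly non-unique (a regular hexagon is a sum of two triangles and also of three segments). Overlapping direction spaces do not produce shared edge directions or a common segment summand: two triangles with disjoint edge-direction sets sum to a hexagon, and their direction spaces coincide. Likewise, in your $\dim Q=1$, $\dim R=d$ branch, a full-dimensional summand $R$ of $P=R+[0,u]$ need not contain any segment summand to ``strip off''---the $d$-simplex summand of a pentasm or capped prism is exactly such an $R$---so the proposed iteration cannot even start.

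The deeper structural problem is that your only quantitative tool, the injection $v\mapsto(v_Q,v_R)$, yields the upper bound $f_0(P)\le f_0(Q)f_0(R)$, with equality only in the complementary case. In the generic situation (for instance both summands $d$-dimensional, which your scheme never reaches), this inequality points the wrong way: what the proposition needs is a \emph{lower} bound on $f_0(P)$ for an arbitrary nontrivial decomposition, and nothing in your argument supplies one. This is precisely why the cited proofs work with different machinery (Kallay's decomposability of the geometric edge graph, as in \cref{thm:McMullen-Decomp}, or the summand analysis in \cite{Yos91,PrzYos16}) rather than with vertex counts of the two summands. To repair your write-up you would either have to reprove the $\ge 2d$ bound for arbitrary decompositions---essentially redoing \cite[Prop.~6]{Yos91}---and then settle the equality case, or simply cite those sources as the paper does.
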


We extend this here by characterising all the decomposable $d$-polytopes with $2d+1$ vertices. Characterising decomposable $d$-polytopes with $2d+2$ vertices appears to be a much harder exercise. When $d=3$, there are already 11 examples, namely those in \cite[245-255]{BriDun73}. For $d=4$, a brief discussion of this problem is given in the next section. We will make repeated use of \cref{thm:McMullen-Decomp}.

\begin{rmk}\label{rmk:cappedprismstructure} It is worth recalling the structure of capped prisms and pentasms. A $d$-dimensional capped prism  is the convex hull of a simplicial prism and a single extra vertex, say $v_0$,  which lies beyond one of the simplex facets (and beneath all the other facets). Also recall that  $CP_{k,d}$ denotes the capped $d$-prism where $k$ is the minimum dimension of any face of the simplicial prism whose affine hull contains the {\it extra} vertex. If $k=1$, the capped prism will be (combinatorially) just another prism. If $k=2$, then $P$ is a pentasm, with $d^2+d-1$ edges. For $k\ge3$, we can label the vertices as $u_1,\ldots,u_d$, $v_0,\ldots, v_d$ in such a way that the edges, $d^2+d$ in total, are $[u_i,u_j]$ for all $i,j$, $[v_i,v_j]$ for all $i,j$, and  $[u_i,v_i]$ for all $1\le i\le d$.
\end{rmk}

\begin{rmk}\label{rmk:}
In the case of a pentasm, one edge, say $[v_1,v_2]$ will be absent, and then $u_1,u_2,v_0,v_1,v_2$ will form a pentagonal face. In the case of a prism, the vertex $v_0$ will be absent.
\end{rmk}

In \cite{PinUgoYos15}, we studied the minimum number of edges of polytopes with up to $2d+1$ vertices. In this section, we need only the result for $2d+1$ vertices. The corresponding result for polytopes with up to $2d$ vertices is given in the Appendix.

\begin{theorem}[{\cite[Thm.~13]{PinUgoYos15}}]\label{thm:pentasm} The polytopes with $2d+1$ vertices and $d^2+d-1$ or fewer edges are as follows.
\begin{enumerate}[(i)]
\item For $d=3$, there are exactly two polyhedra with seven vertices and eleven edges; the
pentasm, and $\Sigma_3$. None have fewer edges.

\item For $d=4$, a sum of two
triangles $\Delta_{2,2}$ is the unique polytope with 18 edges, and the pentasm is the unique polytope with 19 edges.  None have fewer edges.

\item For $d\ge5$,  the pentasm is the unique polytope with $d^2+d-1$ edges. None have fewer edges.
\end{enumerate}
\end{theorem}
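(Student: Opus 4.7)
The plan is to convert the edge-count question into a pure excess-degree question and then apply the structural results of \S4. For any $d$-polytope with $f_0 = 2d+1$ vertices and $f_1 \le d^2+d-1$ edges,
\[
\xi(P) \;=\; 2f_1 - d(2d+1) \;\le\; d-2,
\]
with equality precisely when $f_1 = d^2+d-1$. The Excess Theorem (\cref{thm:excess}) then restricts $\xi(P)$ to the two values $0$ and $d-2$.

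In the simple case $\xi(P)=0$, the parity relation $\xi \equiv df_0 \pmod 2$ forces $d$ to be even, and \cref{lem:CountSimpleP} requires $2d+1$ to belong to $\{d+1, 2d, 3d-3, 3d-2, 3d-1, 3d\}$. The only solution is $2d+1 = 3d-3$, forcing $d=4$ and $P = \Delta_{2,2}$ with $f_1 = 18$; no simple polytope with $2d+1$ vertices exists when $d \ne 4$. In the remaining case $\xi(P)=d-2$ (so $f_1 = d^2+d-1$), I invoke \cref{thm:excess-d-2-full-story}. Case~(i) yields a unique nonsimple vertex $v$ of degree $2d-2$; \cref{lem:vertex-all-neighbours-simple} combined with \cref{lem:CountSimpleP}(ii) then identifies the vertex figure $P/v$ as a simplicial $(d-1)$-prism, so $v$ has $2d-2$ neighbours arranged in prism form and lies in $d+1$ facets, leaving exactly $2$ vertices of $P$ not adjacent to $v$. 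For $d=3$ this configuration is realised by $\Sigma_3$; for $d \ge 4$ I would rule it out by showing that the two non-neighbours cannot complete a polytope with the prescribed $f$-vector. Case~(ii) places the $d-2$ nonsimple vertices as a $(d-3)$-simplex $K = F_1 \cap F_2$, where $F_1, F_2$ are simple facets; each vertex of $K$ then has exactly two neighbours in each $F_i \setminus K$ and none outside $F_1 \cup F_2$. Using \cref{rmk:Pentasm-Facets} as a blueprint, I would reconstruct $P$ as the truncation of a simple vertex of the triplex $M_{2,d-2}$, i.e.\ as the pentasm.

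The main obstacle is the uniqueness step within Case~(ii): the local data around $K$ determines the degree sequence but not the global combinatorial type. My strategy is to exploit \cref{lem:facets-intersecting-at-ridges} together with the Shephard facet provided by \cref{thm:excess-d-2-full-story}: any facet of $P$ which contains a vertex of $K$ but not the whole of $K$ must meet each $F_i$ in a ridge, and so is pinned down by a single missing vertex of $K$. Pushing outward through the $d+3$ simple vertices should then force the facet lattice to coincide with that of the pentasm. For $d=3$ the two subcases of \cref{thm:excess-d-2-full-story} coincide and both the pentasm and $\Sigma_3$ appear; this can be verified directly from the catalogue of polyhedra with seven vertices and eleven edges. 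For $d=4$ an additional step is needed to rule out Case~(i) as a competitor to the pentasm on $9$ vertices and $19$ edges.
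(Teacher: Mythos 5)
Your proposal cannot be compared against an internal proof, because the paper does not prove \cref{thm:pentasm} at all: it is quoted from \cite[Thm.~13]{PinUgoYos15}. Your excess-degree route is at least legitimate and non-circular within this paper (\cref{thm:excess}, \cref{thm:excess-d-2-full-story} and \cref{lem:CountSimpleP} do not depend on \cref{thm:pentasm}), and the reduction is sound: $\xi(P)\le d-2$ forces $\xi(P)\in\{0,d-2\}$, the parity argument plus \cref{lem:CountSimpleP} correctly disposes of the simple case (only $d=4$, $\Delta_{2,2}$, $18$ edges), and this also yields all the ``none have fewer edges'' assertions.

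However, the two steps you defer are the actual content of the theorem, and one of your proposed routes would fail as stated. In Case (i) (a unique nonsimple vertex $v$ of degree $2d-2$) you plan to ``rule it out by showing that the two non-neighbours cannot complete a polytope with the prescribed $f$-vector''; but the degree sequence is perfectly consistent: one vertex of degree $2d-2$ together with $2d$ simple vertices gives degree sum exactly $2(d^2+d-1)$, so no vertex/edge count can exclude this configuration and a genuinely structural argument is required. (For $d\ge5$ one can push further: by \cref{thm:excess-d-2-full-story} and \cref{lem:Excessd-2Facets0} the two facets meeting at $v$ are simple with at most $d+2$ vertices, hence simplices, and an edge count then forces the two non-neighbours $x,y$ to be adjacent with no edges between the two simplices except through $v$; the contradiction must then come from analysing the $2$-faces at $v$ corresponding to the vertical edges of the prismatic vertex figure, or from an induction on the facets with excess $d-3$ --- an argument comparable in length to the paper's proofs of \cref{thm:d=4} or \cref{thm:DavidDecom2d+1}, and $d=4$ needs separate treatment since the facets at $v$ need not be simplices.) Likewise, in Case (ii) the uniqueness claim --- that ``pushing outward through the $d+3$ simple vertices'' forces the face lattice of the pentasm --- is precisely the reconstruction that has to be carried out; nothing in your outline pins down the facets not containing $K$, and the analogous reconstructions in \cref{twodeeplusone} show this takes a careful induction with case analysis, with the low-dimensional base cases (where $\Sigma_3$ and $\Delta_{2,2}$ intrude) done by hand rather than waved at a catalogue. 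As written, then, this is a correct reduction plus a plan, not a proof: the classification heart of the statement is missing.
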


\begin{lemma}\label{twoout} Let $P$ be a decomposable $d$-polytope, $F$ a facet of $P$, and suppose that there are only two vertices of $P$ outside $F$.  Then  $F$ is decomposable, and has Shephard's property.
\end{lemma}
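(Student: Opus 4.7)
My plan is to use the Minkowski decomposition of $P$ directly. Since $P$ is decomposable, write $P = Q + R$ nontrivially, with neither summand a point, and let $\nu$ be the outward unit normal of $F$. Denote by $Q^\nu, R^\nu$ the $\nu$-maximizing faces of $Q$ and $R$; then $F = Q^\nu + R^\nu$, and each vertex $v$ of $P$ decomposes uniquely as $v = q_v + r_v$ with $q_v \in \ver Q$ and $r_v \in \ver R$. The key observation is that the projection $\ver P \to \ver R$, $v \mapsto r_v$, is surjective, and its restriction to $\ver F$ lands in $\ver R^\nu$. Hence the two external vertices $x, y$ are responsible for every vertex of $R$ outside $R^\nu$, giving $|\ver R \setminus \ver R^\nu| \le 2$, and symmetrically $|\ver Q \setminus \ver Q^\nu| \le 2$.

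To show $F$ is decomposable, I would split into two cases. If both $\dim Q^\nu \ge 1$ and $\dim R^\nu \ge 1$, the equation $F = Q^\nu + R^\nu$ is already a nontrivial Minkowski decomposition of $F$, so we are done. Otherwise, after swapping $Q$ and $R$ if necessary, $R^\nu = \{r_0\}$ is a single vertex, $\dim Q^\nu = d-1$, and $F$ is a translate of the facet $Q^\nu$ of $Q$. Moreover $|\ver R| \le 3$, so $R$ is a segment or a triangle. I would then analyze the at-most-two extra vertices of $Q$ outside $Q^\nu$ together with the degree conditions $\deg_P x, \deg_P y \ge d$, to force either parallel edges in $Q$ (when $R$ is a segment) or to use the triangular structure of $R$ directly; either scenario yields a nontrivial decomposition of $Q^\nu \cong F$.

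For Shephard's property, each $v = q_v + r_v \in \ver F$ can only have $x$ or $y$ as external neighbors. Edges of $P$ arise from the Minkowski sum as either an edge of $Q$ paired with a vertex of $R$, a vertex of $Q$ paired with an edge of $R$, or a diagonal sum of parallel edges from $Q$ and $R$. Classifying the external edges at $v$ by this trichotomy and using the two-external-vertex hypothesis together with the fact that every vertex of a facet has at least one external neighbor, I would conclude that exactly one of $x, y$ is adjacent to $v$. The main obstacle is the subcase above in which $R^\nu$ is a point and $Q$ is a pyramid over $Q^\nu$ (so the unique extra vertex of $Q$ is the apex): since pyramids are indecomposable, no decomposition of $F = Q^\nu$ transfers from $Q$, and one must instead exploit the degree constraints on $x, y$ directly---the candidate type-(a) and type-(b) edges at the external vertices are too few---to force $Q$ to contain edges parallel to $R$, or to rule the pyramid subcase out in favor of a triangular $R$, producing the required decomposition of $F$.
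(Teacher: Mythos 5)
Your plan has genuine gaps, and the decisive steps are only announced (``I would then analyze\dots'', ``I would conclude\dots''), not carried out. The most concrete error is in your first case: from $F=Q^\nu+R^\nu$ with $\dim Q^\nu\ge1$ and $\dim R^\nu\ge1$ you cannot conclude that $F$ is decomposable. A Minkowski decomposition witnesses decomposability only if neither summand is homothetic to $F$, and a support-function computation shows that if one of $Q^\nu,R^\nu$ is a homothet of $F$ then so is the other; the remaining possibility, $\dim Q^\nu=\dim R^\nu=d-1$ with both summands homothets of $F$, is exactly the trivial situation in which even an indecomposable $F$ (e.g.\ a simplex, which is the sum of two smaller simplices) satisfies your hypotheses, so your Case~1 proves nothing there. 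Ruling this subcase out needs a further argument using the two-outside-vertices hypothesis (or an appeal to \cref{prop:2ddecomp}), which you do not give. Similarly, in your second case ($R^\nu$ a point, $F$ a translate of $Q^\nu$) the claim that the degree conditions on $x,y$ ``force'' a decomposition of $Q^\nu$ is precisely the hard content, and you concede in your last paragraph that the pyramid subcase is an unresolved obstacle rather than resolving it.

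The Shephard part is also not established: what must be shown is that no vertex $v$ of $F$ is adjacent to \emph{both} $x$ and $y$ (one outside neighbour always exists), and the trichotomy of edges of a Minkowski sum does not obviously yield this without substantial extra work. The paper's proof avoids all of this by working with Kallay's geometric-graph criterion (\cref{thm:McMullen-Decomp}): since two vertices cannot form a facet, $F$ meets every facet of $P$, so if $F$ were indecomposable its skeleton would be an indecomposable graph touching every facet, contradicting the decomposability of $P$; and if some $v\in F$ were adjacent to both $x$ and $y$, the triangle $vxy$ would be such a graph, again a contradiction, giving Shephard's property in one line. If you want to keep your summand-based framework you must (i) exclude the ``both summands homothetic to $F$'' subcase, (ii) actually complete the pyramid subcase, and (iii) supply the missing argument that no vertex of $F$ sees both external vertices; as it stands the proposal is an outline whose hardest steps are exactly the ones left open.
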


\begin{proof} The two vertices outside $P$ are not enough to form a facet. Thus $F$ touches every facet, and so must be decomposable by \cref{thm:McMullen-Decomp}. Moreover if some vertex $v\in F$ were adjacent to both vertices $x,y$ outside $F$, then the triangle $vxy$ would be an indecomposable face touching every facet, contrary to the decomposability of $P$.
\end{proof}

\begin{lemma}\label{lowdim}
\begin{enumerate}[(i)]
\item $\Sigma_3$ is not a facet of any decomposable 4-polytope with nine vertices.
\item $\Delta_{2,2}$ is not a facet of any decomposable 5-polytope with eleven vertices.
\end{enumerate}
\end{lemma}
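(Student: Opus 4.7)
The plan is a short edge-counting argument in both parts, exploiting the fact that $F$ has only two vertices outside it so that \cref{twoout} applies.

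For (i), let $F = \Sigma_3$ be a facet of a decomposable 4-polytope $P$ with nine vertices; then $F$ has seven vertices and exactly two lie outside. \cref{twoout} says that $F$ has Shephard's property, so every vertex of $F$ has a unique neighbour outside $F$, giving seven cross-edges. Since $\Sigma_3$ has a single vertex of excess degree one and six simple vertices, it has eleven edges, hence
\[ f_1(P) = 11 + 7 + \varepsilon = 18 + \varepsilon,\]
where $\varepsilon \in \{0,1\}$ records whether the two exterior vertices are adjacent. The sum of the degrees of the two exterior vertices is $7 + 2\varepsilon$, and each must be at least $d = 4$, forcing $\varepsilon = 1$ and $f_1(P) = 19$. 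By \cref{thm:pentasm}(ii), $P$ must then be the 4-pentasm. But the facets of the 4-pentasm listed in \cref{rmk:Pentasm-Facets} are two 3-pentasms, two triangular prisms and three simplices; none of these is $\Sigma_3$ (whose facet list in \cref{rmk:Sigmad-Facets} comprises two quadrilaterals and four triangles, whereas the 3-pentasm has exactly one pentagonal face), a contradiction.

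For (ii), let $F = \Delta_{2,2}$ be a facet of a decomposable 5-polytope $P$ with eleven vertices. Then $F$ has nine vertices, two vertices lie outside, and \cref{twoout} gives nine cross-edges. Since $\Delta_{2,2}$ is simple of dimension 4, it has $\tfrac{1}{2}\cdot 4 \cdot 9 = 18$ edges, so
\[ f_1(P) = 18 + 9 + \varepsilon = 27 + \varepsilon,\]
with $\varepsilon \in \{0,1\}$. The two exterior vertices each have degree at least $5$, so their degree sum $9 + 2\varepsilon \ge 10$ forces $\varepsilon = 1$ and $f_1(P) = 28$. But by \cref{thm:pentasm}(iii) every 5-polytope with eleven vertices has at least $d^2 + d - 1 = 29$ edges, a contradiction.

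Thus the only substantive work is the edge count combined with the lower bound from \cref{thm:pentasm}; the main (and only mildly delicate) point in (i) is verifying from the facet lists that $\Sigma_3$ is not among the facets of the 4-pentasm, which is a direct comparison of 2-face structures. In (ii) the discrepancy between 28 and 29 edges yields the contradiction with no further case analysis.
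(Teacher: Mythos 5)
Your proof is correct and follows essentially the same route as the paper: Shephard's property via \cref{twoout} gives the edge count, which in (i) forces $P$ to be the 4-pentasm (whose facet list excludes $\Sigma_3$) and in (ii) yields too few edges; the paper phrases the latter contradiction via the excess theorem (excess one is impossible for a 5-polytope) rather than the $29$-edge lower bound of \cref{thm:pentasm}, but the two are interchangeable here. One small slip: $\Sigma_3$ has four quadrilateral and two triangular 2-faces, not two and four; this does not affect your argument, since all you need is that $\Sigma_3$ has no pentagonal face while the 3-pentasm does.
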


\begin{proof}
(i) Let us entertain the possibility that $P$ is such a polytope, i.e. it has a facet $F$ of the type $\Sigma_{3}$.

By \cref{twoout}, every vertex in $F$ belongs to only one edge not in $F$, and thus $P$ has 19=11+7+1 edges. According to \cref{thm:pentasm}, only the pentasm has 9 vertices and 19 edges, and it does not have $\Sigma_3$ as a facet according to \cref{rmk:Pentasm-Facets}.

(ii) Consider the possibility that $P$ is such a polytope, with $\Delta_{2,2}$ as a facet, say $F$. Again, by \cref{twoout},  every vertex in $F$ belongs to only one edge not in $F$, and so $P$ has 28=18+9+1 edges. But then $P$ has excess degree only one, which is impossible by ~\cref{thm:excess}.
\end{proof}

\begin{rmk}\label{rmk:Delta23-6Polytope}
A similar argument  proves that $\Delta_{2,3}$ is not a facet of any decomposable 6-polytope with 14 vertices; this will be useful to us in another context.
\end{rmk}

\begin{lemma}\label{twodeeplusone}
Let $P$ be a decomposable $d$-polytope with $2d+1$ vertices.
\begin{enumerate}[(i)]
\item Every facet of $P$ with fewer than $2d-2$ vertices is indecomposable. Every facet of $P$ with exactly $2d-2$ vertices is a prism. Any  facet of $P$  with $2d-1$ vertices is decomposable, and moreover every vertex therein belongs to only one edge outside the facet. No  facet of $P$ has $2d$ vertices.
\item If some decomposable facet of $P$ is  a capped prism, then $P$ is   a capped prism.
\item If some decomposable facet of $P$ is  a pentasm,  then $P$ is  a pentasm.
\item If every decomposable facet of $P$ is  a  prism, then  $P$ is either $\Sigma_3$, $\Delta_{2,2}$, or a capped prism $CP_{d,d}$.
\end{enumerate}
\end{lemma}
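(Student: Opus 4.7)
\textbf{Proof plan for \cref{twodeeplusone}.}

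For part~(i), each assertion reduces to an earlier result. A facet with $2d$ vertices leaves only one external vertex, forcing $P$ to be a pyramid with that facet as base; pyramids are always combinatorially indecomposable, contradicting the decomposability of $P$. A facet with $2d-1$ vertices leaves exactly two external vertices, so \cref{twoout} yields decomposability together with Shephard's property. For facets with at most $2d-2$ vertices, \cref{prop:2ddecomp} applied in dimension $d-1$ says that the only decomposable $(d-1)$-polytope with $2(d-1)$ or fewer vertices is the $(d-1)$-prism; hence facets with strictly fewer than $2d-2$ vertices are indecomposable, and any decomposable facet with exactly $2d-2$ vertices is a $(d-1)$-prism.

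For parts~(ii) and~(iii), let $F$ be a decomposable facet of $P$ which is a capped prism $CP_{k,d-1}$ (respectively a pentasm $CP_{2,d-1}$). Then $F$ has $2d-1$ vertices, so by part~(i) every vertex of $F$ has precisely one neighbour outside $F$. Label the vertices of $F$ as in \cref{rmk:cappedprismstructure}: the $u$-block $u_1,\dots,u_{d-1}$ forms a simplex, the $v$-block $v_0,\dots,v_{d-1}$ forms a simplex, and matching edges $[u_i,v_i]$ connect them; the pentasm case is handled analogously with its pentagonal modification. Writing $\{x,y\}$ for the two external vertices of $P$, I would split the $2d-1$ outgoing edges of $F$ between $x$ and $y$ and use the minimum-degree bound $\deg\ge d$ together with \cref{thm:McMullen-Decomp}(i)--(iii) to show that one of $x,y$ is adjacent to every $u_i$ and the other to every $v_i$; any other distribution either drops a vertex below degree $d$ or produces an indecomposable spanning subgraph, contradicting the decomposability of $P$. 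The edge $[x,y]$ is then forced by an edge-count identity and the need for suitable higher-dimensional facets, so the graph of $P$ coincides with that of a capped $d$-prism (respectively pentasm); the combinatorial type being determined by the cap position, $P$ is the claimed polytope.

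For part~(iv), the hypothesis combined with part~(i) forces every facet of $P$ to have at most $2d-2$ vertices: any facet with $2d-1$ vertices would be decomposable (by part~(i)) but not a prism, contrary to the assumption. By \cref{thm:d-1DecomposableFacets} at least one facet $F$ of $P$ is decomposable, hence a $(d-1)$-prism, leaving three external vertices $x,y,z$. Writing $T_1,T_2$ for the two $(d-2)$-simplex bases of $F$, I would enumerate the ways $x,y,z$ can connect to $T_1\cup T_2$ and to one another, using \cref{thm:McMullen-Decomp} to rule out any configuration producing an indecomposable spanning subgraph, and using \cref{lowdim} to exclude $\Sigma_3$ and $\Delta_{2,2}$ from appearing as a facet in inadmissible dimensions. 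Exactly three patterns survive: one in which one external vertex completes $T_1$ to a $(d-1)$-simplex, a second completes $T_2$, and the third sits as a cap vertex beyond one of the two resulting simplex facets, yielding $P=CP_{d,d}$; a special $d=3$ configuration giving $P=\Sigma_3$; and a special $d=4$ configuration giving $P=\Delta_{2,2}$.

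The principal obstacle will be the case analysis in parts~(ii)--(iv), and especially the three-external-vertex enumeration in~(iv). The available tools are well suited—Shephard attachment from part~(i), the indecomposability criteria of \cref{thm:McMullen-Decomp}, the low-dimensional exclusions of \cref{lowdim}, and the rigid graph description of \cref{rmk:cappedprismstructure}—but propagating the one-neighbour-out constraint across both simplex blocks of $F$, while ensuring that no affinely independent cycle on the simplex blocks (extended by \cref{thm:McMullen-Decomp}(v) through outside vertices) produces an indecomposable subgraph touching every facet, requires careful bookkeeping of edge counts into $T_1$ versus $T_2$ and of the edges inside $\{x,y,z\}$.
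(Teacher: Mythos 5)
Your overall route is the same as the paper's: part (i) via \cref{twoout}, \cref{prop:2ddecomp} and the indecomposability of pyramids, and parts (ii)--(iv) by labelling the prism/capped-prism/pentasm facet as in \cref{rmk:cappedprismstructure} and forcing the two (or three) external vertices to attach to the simplex blocks by means of the cycle criteria in \cref{thm:McMullen-Decomp}. However, there are genuine gaps. First, in (i) you only establish that every \emph{decomposable} facet with exactly $2d-2$ vertices is a prism, whereas the lemma asserts this for \emph{every} such facet. The missing step: such a facet $F$ leaves only three vertices outside, which for $d\ge4$ cannot form a facet, so $F$ touches every facet of $P$; if $F$ were indecomposable, \cref{thm:McMullen-Decomp}(iii) would force $P$ to be indecomposable, so $F$ is decomposable and only then does \cref{prop:2ddecomp} give a prism (for $d=3$ the facet is a quadrilateral, i.e.\ a $2$-prism, directly).

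Second, in (ii) and in the capped-prism outcome of (iv), identifying the graph of $P$ with that of a capped prism does not finish the proof. The capped prisms $CP_{k,d}$ for $3\le k\le d$ are pairwise combinatorially distinct yet have isomorphic graphs, and a priori a polytope with that graph need not be a capped prism at all; you cannot appeal to \cref{cor:cappedgraph}, since it is deduced from this very lemma. So your phrase ``the combinatorial type being determined by the cap position'' is precisely the point that requires proof: one must reconstruct the full vertex--facet incidences, which the paper does by exploiting the simplicity of the vertices $u_i$ to list all facets and show no others exist. (The pentasm case (iii) is fine once you invoke the uniqueness in \cref{thm:pentasm}, together with the remark that for $d=3$ the competitor $\Sigma_3$ has no pentagonal facet.) Finally, your claim in (iv) that ``exactly three patterns survive'' overlooks the configuration where one external vertex sends only $d-2$ edges into the block $V$: this gives $d^2+d-1$ edges in total, hence a pentasm by \cref{thm:pentasm}, and it must be excluded separately because a pentasm possesses decomposable facets that are not prisms, contradicting the hypothesis of (iv).
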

\begin{proof} (i) The indecomposability of $d$-polytopes with less than $2d$ vertices is asserted by \cite[Prop.~6]{Yos91}.

If a facet $F$ has $2d-2$ vertices then there are only three other vertices in $P$. If $d=3$, then $F$ is a quadrilateral, which is a 2-dimensional prism. If $d\ge4$, these three  vertices are not enough to form a facet. Then $F$ touches every facet, and must be decomposable by \cref{thm:McMullen-Decomp}, hence a prism by \cref{prop:2ddecomp}.

For facets  with $2d-1$ vertices, this is just \cref{twoout}.

The last assertion follows from the indecomposability of pyramids.

(ii) and (iii). Some facet $F$ is either a pentasm or a capped prism. We can write the vertex set of $F$ as $U\cup V$, where $U=\{u_i: 1\le i\le d-1\}$, $V=\{v_i: 0\le i\le d-1\}$ and the edges are as described in \cref{rmk:cappedprismstructure}; see also \cref{fig:polytopes}. Note that the geometric subgraphs determined by $U$ and $V$ are affinely independent cycles, and hence indecomposable.

Let $x$ and $y$ be the vertices outside $F$. We claim that one of them is adjacent only to vertices in $U$, while the other  is adjacent  only to vertices in $V$.

Suppose not. Then  one of them, say $x$, is adjacent to both $u_i$ and $v_j$ for some $i,j$. Since $d-1\ge3$, $x$ is adjacent to at least three vertices in $F$. Thus we may assume that $i\ne j$. If $F$ is a capped prism, or if $\{i,j\}\ne\{1,2\}$, then $x, v_j, v_i, u_i$ is a nonplanar 4-cycle sharing two vertices with $V$.  If $\{i,j\}=\{1,2\}$, then $x, v_j, v_3, u_3, u_i$ is an affinely independent 5-cycle sharing two vertices with $V$. Applying \cref{thm:McMullen-Decomp}, either $V\cup\{x,u_i\}$ or $V\cup\{x,u_i,u_3\}$ is  an indecomposable subgraph which contains all but $d-1$ or $d-2$ vertices of $P$, and hence touches every facet. But this implies indecomposability of $P$.

So we have $y$ (say) adjacent to every $v_j$, and $x$  adjacent to every $u_j$. This describes the graph of $P$ completely; it is the graph of either a pentasm or a capped prism.

We know already that a pentasm is determined uniquely by its graph, since it is the unique $d$-polytope with $2d+1$ and $d^2+d-1$ edges, except for $d=3$ where $\Sigma_3$ also share these properties; see \cref{thm:pentasm}.

In case $F$ is a capped prism, a little more care is needed to determine the face lattice of $P$, which we claim is also a capped prism; we may also assume that no other facet is a pentasm. Again, $x$ is adjacent to everything in $U$, and $y$ to everything in $V$, so it is reasonable to relabel  $x=u_d$ and $y=v_d$. Thus $P$ has the same graph as a capped prism, but we need to reconstruct the whole face lattice. Denote by $E_i$ the edge $[u_i,v_i]$ and by $k$ the minimum dimension of any face of the prism determined by $E_{1},\ldots,E_{d-1}$ whose affine hull contains $v_0$. Reindexing if necessary, we may suppose that $v_0$ lies in the affine hull of $\bigcup_{i=1}^kE_i$, and not to the affine hull of any smaller collection of these edges. For $j\le k$, let $F_j$ be the convex hull of $\bigcup_{i\ne j}E_i$, and let $S_j$ be the convex hull of $\{v_i: i\ne0,i\ne j\}$. For $j>k$, let $F_j$ be the convex hull of $\bigcup_{i\ne j}E_i\cup\{v_0\}$. It is not hard to see that each of these is a facet of $P$, as is $S$, the convex hull of $U\cup\{u_d\}$. In particular, $F=F_d$. We claim there are no other facets.

Since each $u_i$ is a simple vertex in $P$, it must belong to exactly $d$ facets of $P$. The list just given contains $d$ such facets for each $u_i$. So any other facet must be contained in $V\cup\{v_d\}$. All possible subsets are also accounted for by the list of facets $S$, $S_j$ for $j\le k$ and $F_j$ for $j\le d$. We have now described the vertex-facet incidences of a capped prism.

(iv) The case $d=3$ is essentially due to Smilansky \cite[\S6]{Smi87}; for further discussion, see \cite[p. 177]{PrzYos16}. Now suppose $d\ge4$.

Fix a decomposable  facet $F$, which by hypothesis is a prism. Now there are three vertices outside $F$; call them $a, b, c$. Following the previous notation, $F$ is the convex hull of two simplices $U$ (with vertices $u_1,\ldots,u_{d-1}$) and $V$ (with vertices $v_1,\ldots,v_{d-1}$). Note that if $i\ne j$, then no vertex outside $F$ can be adjacent to both $u_i$ and $v_j$. For if say $a$ were, then both $a,u_i,u_j,v_j$ and $a,u_i,v_i,v_j$ would be nonplanar 4-cycles with 3 vertices in common. Their union with $U$ and $V$ would then be an indecomposable graph containing all but two vertices of $P$, making $P$ indecomposable.

Consider the possibility that one of the external vertices, say $a$, is adjacent to both a vertex in $U$ and  a vertex in $V$. By the previous paragraph, there is a unique $i$ with $a$ adjacent to both $u_i$ and $v_i$.  Now the degree of $a$ is at most four, so $d=4$, and $F$ is 3-dimensional and contains three quadrilateral and two triangular ridges. Suppose $G$ is the other facet corresponding to one of the quadrilateral ridges. If $G$ were indecomposable, its union with the two triangular faces of $F$ would constitute an indecomposable graph containing at least seven of the nine vertices of $P$, contradicting the decomposability of $P$. It follows that each of these three other facets must be a prism. This means that each of them contains only two of the vertices outside $F$, each such vertex being adjacent to two vertices of the quadrilateral. This is only possible if $a,b,c$ can be renamed $w_1,w_2,w_3$ in such a way that they are adjacent to one another, and each $w_i$ is adjacent to both $u_i$ and $v_i$. Thus $P$ is simple and has the same graph as $\Delta_{2,2}$.

The remaining case is that each vertex outside $F$ is adjacent only to $U$ or $V$. Without loss of generality, $a$ is adjacent to every vertex in $U$, while $b$ and $c$ are adjacent only to vertices in $V$. Application of \cref{thm:McMullen-Decomp}(ii) ensures that $U\cup\{a\}$ and $V\cup\{b\}$ are indecomposable geometric graphs. Since $d\ge4$, we can choose an index $i$ for which $v_i$ is adjacent to both $b$ and $c$. We claim that $a$ cannot be adjacent to both $b$ and $c$. If it were, then $a,u_i,v_i,b$ and $a,u_i,v_i,c$ would both be  4-cycles in the graph of $P$; they cannot both be coplanar. So (at least) one of them, say the first, is an indecomposable geometric graph, which shares two vertices with the graph determined by $U\cup\{a\}$, and shares two vertices with the graph determined by $V\cup\{b\}$. The union of these three graphs is indecomposable, and contains every vertex except $c$. This would imply indecomposability of $P$, by \cref{thm:McMullen-Decomp}(iii).

Without loss of generality, $[a,b]$ and $[b,c]$ are edges of $P$, but $[a,c]$ is not. Having degree (at least) $d$, $c$ must then be adjacent to every vertex in $V$. If $b$ were adjacent to only $d-2$ vertices in $V$, we would have  only $d^2+d-1$ edges altogether, and then $P$ would be a pentasm by \cref{thm:pentasm}(ii). Otherwise, $b$ is also adjacent to every vertex in $V$, and we have the same graph as a capped prism. To verify the face lattice, let us first rename $a=:u_d$, $b=:v_d$ and $c=:v_0$. For fixed $i$, denote by $F_i$ the facet of $P$ containing every $u_j$ except $u_i$, by $R_i$ the convex hull of $\{u_j:1\le j\le d, j\ne i\}$, by $S_i$ the convex hull of $\{v_j:1\le j\le d, j\ne i\}$, and by $S_0$ the convex hull of $\{u_j:1\le j\le d\}$. Since each $u_i$ is simple, the $F_i$ are well defined, and $R_i$ is a ridge which has Shephard's property in $F_i$. Clearly $F_i$ contains $v_j$ whenever $j\ne i$, and $F_i$ is also decomposable, hence a prism. The second simplex $(d-2)$-face of $F_i$ is therefore $S_i$. For the ridge $S_i$, the other facet can only be the convex hull of   $S_i\cup\{v_0\}$; note that $v_0$ is also simple. We have now described the vertex-facet incidences of $CP_{d,d}$.\end{proof}

\begin{theorem}\label{thm:DavidDecom2d+1}
Let $P$ be a decomposable $d$-polytope with $2d+1$ vertices. Then $P$ is either a pentasm, a capped prism, $\Sigma_3$ or $\Delta_{2,2}$.\end {theorem}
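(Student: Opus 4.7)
The plan is to argue by induction on the dimension $d$, with virtually all of the combinatorial work already done in \cref{twodeeplusone}. The strategy is to pin down the combinatorial type of one decomposable facet of $P$, and then let \cref{twodeeplusone}(ii)--(iv) propagate this information to $P$ itself.

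First, I would observe that any decomposable $d$-polytope admits at least one decomposable facet: by \cref{thm:d-1DecomposableFacets}, a $d$-polytope with strictly fewer than $d$ decomposable facets is indecomposable, so $P$ in fact has at least $d$ such facets. Pick one of them, call it $F$. By \cref{twodeeplusone}(i), the only possible sizes for $F$ are $2d-2$ vertices (in which case $F$ is a simplicial prism) or $2d-1=2(d-1)+1$ vertices; facets with fewer than $2d-2$ vertices are indecomposable and none have $2d$ vertices.

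Now I would split into two cases. If \emph{every} decomposable facet of $P$ is a prism, \cref{twodeeplusone}(iv) directly delivers $P\in\{\Sigma_3,\Delta_{2,2},CP_{d,d}\}$, which is one of the conclusions we want. Otherwise some decomposable facet $F$ has $2d-1$ vertices and is itself a decomposable $(d-1)$-polytope, to which the inductive hypothesis applies. Thus $F$ must be a pentasm, a capped prism, $\Sigma_3$ (possible only if $d-1=3$), or $\Delta_{2,2}$ (possible only if $d-1=4$). The last two possibilities are ruled out by \cref{lowdim}(i) and \cref{lowdim}(ii) respectively. In the remaining subcases, \cref{twodeeplusone}(iii) gives that $P$ is a pentasm when $F$ is a pentasm, while \cref{twodeeplusone}(ii) gives that $P$ is a capped prism when $F$ is a capped prism.

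For the base of the induction I would take $d=3$, handled by the same scheme: a 2-polytope with $2(d-1)+1=5$ vertices is a pentagon (treated as a $2$-pentasm) and one with $2(d-1)=4$ vertices is a quadrilateral (a $2$-prism), so the appeals to \cref{twodeeplusone} remain meaningful and output $P\in\{\text{pentasm},\Sigma_3,CP_{3,3}\}$. Given how much preparatory work has been absorbed into the lemmas, the only real obstacle is bookkeeping: one must correctly exclude the sporadic low-dimensional candidates $\Sigma_3$ and $\Delta_{2,2}$ as possible types for $F$, which is precisely the content of \cref{lowdim}, and one must verify that the restriction $d-1\in\{3,4\}$ makes those exotic subcases arise in only one dimension each.
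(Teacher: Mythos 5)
Your proposal follows the paper's proof essentially verbatim: obtain a decomposable facet via \cref{thm:d-1DecomposableFacets}, note it has $2d-2$ or $2d-1$ vertices, exclude $\Sigma_3$ and $\Delta_{2,2}$ as possible facet types via \cref{lowdim}, classify the facet (inductively, or as a prism), and lift the classification to $P$ with \cref{twodeeplusone}(ii)--(iv). The only (minor) divergence is the base case $d=3$: the proofs of \cref{twodeeplusone}(ii)--(iii) genuinely require $d\ge4$ (they use affinely independent cycles on $U$ and $V$ and the bound $d-1\ge3$), so rather than treating a pentagonal facet as a ``$2$-pentasm'' one should, as the paper does, dispose of $d=3$ by the known classification of decomposable $7$-vertex polyhedra contained in \cref{twodeeplusone}(iv) (ultimately Smilansky's result).
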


\begin{proof} The  case $d=3$ is included in \cref{twodeeplusone}(iv). Henceforth assume $d\ge4$.

For $d=4$, suppose $P$ has nine vertices and is decomposable. Then according to \cref{thm:d-1DecomposableFacets} it has a decomposable  facet $F$, which clearly has at most seven vertices. By \cref{lowdim}(i), $F$ cannot be $\Sigma_3$, so it must be a prism, capped prism or pentasm. Every possibility is covered by the various cases in  \cref{twodeeplusone}, so $P$ is either $\Delta_{2,2}$, a capped prism or pentasm.

Likewise if $d=5$.  By \cref{lowdim}(ii), no decomposable facet $F$ can be $\Delta_{2,2}$, so the only options are prisms, capped prisms and pentasms. All these cases have been dealt with in \cref{twodeeplusone}.

Finally, we can  proceed with the induction. Suppose $d\ge6$, and that it has been established for every smaller dimension, and fix a $d$-dimensional decomposable polytope $P$ with $2d+1$ vertices.  Then $P$ has decomposable facet $F$, all with at most $2d-1$ vertices. By induction, each such facet must be  a prism, capped prism or pentasm, and \cref{twodeeplusone} then completes the proof.
\end{proof}

 The last proof incidentally proves that a conditionally decomposable $d$-polytope must have at least $2d+2$ vertices. It also leads to the following.

\begin{corollary}\label{cor:cappedgraph} A polytope, whose graph is that of a capped prism, is a capped prism.\end{corollary}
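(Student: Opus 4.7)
The plan is to reduce the corollary to \cref{thm:DavidDecom2d+1} by first showing that any polytope $P$ whose graph is that of a capped prism is Minkowski decomposable; an edge count then eliminates the three non-capped-prism possibilities in that classification.

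Label the vertices of $P$ as $u_1,\ldots,u_d,v_0,\ldots,v_d$ with the standard adjacencies recalled in \cref{rmk:cappedprismstructure}, and note that $u_1,\ldots,u_d$ and $v_0$ each have degree $d$ (hence are simple in $P$), while each $v_j$ with $j\ge 1$ has degree $d+1$. The first step is to establish that $U:=\{u_1,\ldots,u_d\}$ is a simplex facet of $P$. Since $u_1$ is simple, its vertex figure $P/u_1$ is a $(d-1)$-simplex, which delivers a unique facet $F$ of $P$ containing $u_1$ together with all its neighbors except $v_1$; in particular, $\{u_1,\ldots,u_d\}\subseteq F$ and $v_1\notin F$. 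For each $i\ge 2$, the simplicity of $u_i$ in $P$ is inherited by the facet $F$, because $F/u_i$ is a face of the simplicial vertex figure $P/u_i$; hence $u_i$ has exactly $d-1$ neighbors in $F$, but these are already accounted for by $\{u_j:j\ne i\}$, which forces $v_i\notin F$. The vertex $v_0$ also cannot lie in $F$: its neighbors in $P$ all belong to $\{v_1,\ldots,v_d\}$, which is disjoint from $F$, so its degree in $F$ would be zero, impossible in a $(d-1)$-polytope. Thus $F=U$, a simplex facet.

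Each $u_i$ has exactly one neighbor $v_i$ outside $U$, so $U$ has Shephard's property, and since the $d+1\ge 2$ vertices $v_0,\ldots,v_d$ lie outside $U$, \cref{thm:shp} implies that $P$ is decomposable. \cref{thm:DavidDecom2d+1} then asserts that $P$ is combinatorially a pentasm, a capped prism, $\Sigma_3$, or $\Delta_{2,2}$. The graph of $P$ has $\binom{d}{2}+\binom{d+1}{2}+d=d^2+d$ edges, whereas a $d$-pentasm has $d^2+d-1$ edges, $\Sigma_3$ (the only relevant case when $d=3$) has $11$ rather than $12$ edges, and $\Delta_{2,2}$ (the only relevant case when $d=4$) has $18$ rather than $20$ edges. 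Hence $P$ is combinatorially a capped prism.

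The main obstacle is the identification of $U$ as a facet purely from the combinatorial data of the graph; the crux is that simplicity of a vertex in $P$ is automatically inherited by any facet containing it, which then pins down the neighbors of each $u_i$ inside $F$ to be the $u_j$ with $j\ne i$ and thereby expels every $v_j$.
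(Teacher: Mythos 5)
Your proof is correct and follows essentially the same route as the paper: identify $\{u_1,\dots,u_d\}$ as a simplex facet with Shephard's property using the simplicity of the $u_i$, conclude decomposability from \cref{thm:shp}, and then use \cref{thm:DavidDecom2d+1} together with the edge count $d^2+d$ to rule out the pentasm, $\Sigma_3$ and $\Delta_{2,2}$. You merely spell out the step the paper dismisses as ``not hard to see'' (that the facet through $u_1$ missing $v_1$ has vertex set exactly $\{u_1,\dots,u_d\}$), which is a faithful elaboration rather than a different argument.
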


\begin{proof}

Let $P$ be such a $d$-polytope; then $P$ has $2d+1$ vertices and $d^2+d$ edges. We can label the vertices as $u_1,\ldots,u_d$, $v_0,\ldots, v_d$ as in \cref{rmk:cappedprismstructure}. We know that $u_1$ is simple, and that its neighbours are $u_2,\ldots,u_d,v_1$. It follows that one facet containing $u_1$ must contain $u_2,\ldots,u_d$ but not $v_1$. It is not hard to see that $u_1,\ldots,u_d$ then form a facet, with Shephard's property. Since pentasms, $\Sigma_3$ and $\Delta_{2,2}$ have strictly less than $d^2+d$ edges, $P$ must be a capped prism.
\end{proof}

\section{Characterisations of some 4-polytopes with minimum number of edges}

Recall that a 4-polytope with ten vertices must have at least 21 edges.
We give the complete characterisation of 4-polytopes with ten vertices and  21 or fewer edges; we will need this in \S7.  See \cref{fig:10v-21e} for drawings of these polytopes. They are  all decomposable.

We can also characterise the decomposable 4-polytopes with ten vertices and 22 edges. There are six such examples, as well as (at least) two indecomposable 4-polytopes with ten vertices and 22 edges; we do not need the details of this characterisation here though. More generally, we can characterise the polytopes with $2d+2$ vertices and minimal number of edges in all dimensions, but details will appear elsewhere \cite{PinUgoYos16}.

\begin{figure}[h]
\includegraphics[scale=1]{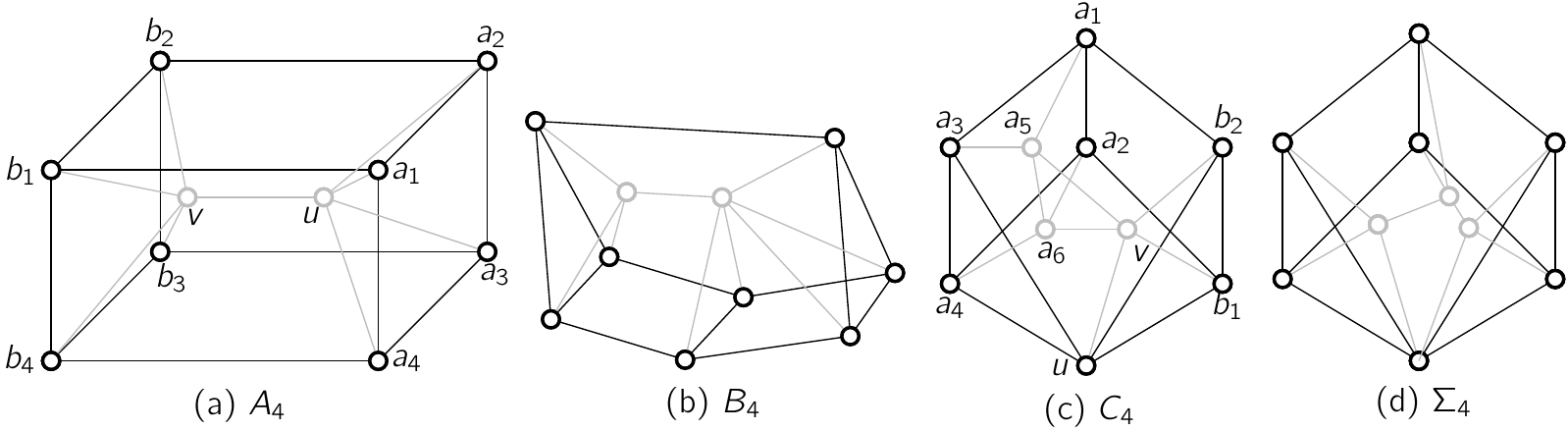}
\caption{The 4-polytopes with ten vertices and 21 or fewer edges.}
\label{fig:10v-21e}
\end{figure}

\begin{theorem}\label{thm:d=4} There are exactly four 4-polytopes with ten vertices and 21 or fewer edges: the polytopes $A_4$, $B_4$, $C_4$ and $\Sigma_4$, all of which have the same $f$-vector $(10,21,18,7)$. \end{theorem}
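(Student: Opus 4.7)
The plan is to proceed in three steps: existence, a lower bound of $21$ edges, and a classification once $f_1 = 21$.

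\textbf{Existence.} Each of $A_4, B_4, C_4, \Sigma_4$ has ten vertices by construction, and the facet lists in \cref{rmk:Ad-Facets}, \cref{rmk:Bd-Facets}, \cref{rmk:Cd-Facets}, \cref{rmk:Sigmad-Facets} show each has $d+3 = 7$ facets. The excess degree of each is $2 = d-2$ (either by the explicit remarks in Section~2 or by direct verification from the construction), so $f_1 = (4\cdot 10 + 2)/2 = 21$, and Euler's relation $f_0 - f_1 + f_2 - f_3 = 0$ then yields $f_2 = 18$.

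\textbf{Lower bound.} Any 4-polytope $P$ with ten vertices satisfies $\xi(P) = 2f_1 - 40$, a nonnegative even integer. If $\xi(P) = 0$ then $P$ would be simple with $10 = 3d - 2$ vertices, contradicting \cref{lem:CountSimpleP}(iv). By the Excess Theorem (\cref{thm:excess}) the next admissible value is $\xi(P) = d-2 = 2$, so $f_1 \ge 21$.

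\textbf{Classification when $f_1 = 21$.} Now $\xi(P) = 2 = d-2$, and \cref{thm:excess-d-2-full-story} supplies two mutually exclusive configurations: either $(i)$ $P$ has a unique nonsimple vertex $u$ of degree $2(d-1) = 6$, arising as the intersection of two simple (hence tetrahedral) facets, or $(ii)$ $P$ has two adjacent nonsimple vertices $u,v$ of degree $d+1 = 5$ whose joining edge $[u,v]$ is the intersection of two simple $3$-polytopal facets. In case $(i)$, \cref{lem:vertex-all-neighbours-simple} together with \cref{lem:CountSimpleP}(ii) forces $P/u$ to be the simplicial $3$-prism; its two triangular ridges correspond to tetrahedra $F_1 = \{u,a_1,a_2,a_3\}$, $F_2 = \{u,b_1,b_2,b_3\}$, while its three quadrilateral ridges correspond to three further facets of $P$ at $u$, each containing four of its neighbours. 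Tracking the three remaining vertices together with the two facets of $P$ missing $u$ against the facet-lists in \cref{rmk:Bd-Facets} and \cref{rmk:Sigmad-Facets} leaves only $P = B_4$ or $P = \Sigma_4$. In case $(ii)$, each $F_i$ is a simple $3$-polytope containing the edge $[u,v]$ with $u,v$ simple in $F_i$; partitioning the eight simple vertices of $P$ among $F_1$, $F_2$ and the exterior, and invoking Shephard's property together with the facet-lists in \cref{rmk:Ad-Facets} and \cref{rmk:Cd-Facets}, yields $P = A_4$ or $P = C_4$.

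The main obstacle lies in the case analysis in step three: ruling out every hybrid arrangement of facets outside the four named polytopes. Fortunately the rigid structure imposed by \cref{thm:excess-d-2-full-story} (prescribed degrees on the nonsimple vertices; every facet has excess at most $d-3 = 1$), combined with Shephard's property and the short list of admissible facet types available from \cref{lem:CountSimpleP} and \cref{rmk:dplus2Facets}, reduces the problem to a finite check that terminates with exactly the four candidates.
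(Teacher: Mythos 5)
Your existence and lower-bound steps are fine and essentially match the paper's opening (non-simplicity of a $10$-vertex $4$-polytope plus the Excess Theorem forces $\xi(P)=2$, $f_1\ge 21$). The problem is the classification step, which is where the whole content of the theorem lies, and there you have a genuine gap compounded by a false assumption. In case (i) you assert that the two facets meeting at the unique nonsimple vertex $u$ are ``simple (hence tetrahedral)'' and write $F_1=\{u,a_1,a_2,a_3\}$, $F_2=\{u,b_1,b_2,b_3\}$. Theorem~\ref{thm:excess-d-2-full-story} (or \cref{lem:Excessd-2Facets0}) only says these facets are \emph{simple} $3$-polytopes in which $u$ has degree $3$; it does not make them simplices. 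Indeed $B_4$ itself is a counterexample: it is obtained by truncating a simple base vertex of $M_{3,1}$, and the two facets whose intersection is the nonsimple apex are a tetrahedron and a triangular prism (the truncated tetrahedron of $M_{3,1}$). So the enumeration you sketch in case (i) would not even recover $B_4$, and it silently discards all the configurations in which one or both of these facets is a prism, a cube, etc.\ — configurations that must be analysed and either realised or excluded. You also speak of ``the two facets of $P$ missing $u$'', which presupposes $f_3=7$; but $f_3$ is not determined by $(f_0,f_1)=(10,21)$ a priori and has to be derived, not assumed. Case (ii) has the same defect: ``partitioning the eight simple vertices \dots and invoking Shephard's property \dots yields $P=A_4$ or $C_4$'' is a statement of the desired conclusion, not an argument; nothing rules out other attachment patterns.

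For comparison, the paper attacks the problem from the facets' side: since $\xi(P)=2$, \cref{lem:facet-not-all-excess} forces every facet to have excess at most one, so each facet with at most eight vertices is a cube, $5$-wedge, pentasm, $\Sigma_3$, prism, quadrilateral pyramid or simplex (and nine-vertex facets are excluded by an edge count). It then runs five cases on which of these facet types occurs, reconstructing the face lattice (giving $A_4$, $B_4$, $\Sigma_4$, $C_4$) or deriving a contradiction via Balinski's theorem and decomposability arguments. Your vertex-figure viewpoint could in principle be made to work, but you would have to carry out honestly the analysis of which simple $3$-polytopes can occur as the two facets through $u$ (respectively through the edge $[u,v]$) and how the remaining three (respectively exterior) vertices attach — precisely the ``finite check'' your last paragraph defers. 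As written, the proof establishes only that the four named polytopes attain $(10,21)$ and that $21$ is the minimum; the uniqueness claim is not proved.
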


\begin{proof} Let $P$ be a 4-polytope with ten vertices and a minimum number of edges. According to \cite[Ch. 10]{Gru03} or \cref{lem:CountSimpleP}, $P$ is not simple, so we can assume it has exactly 21 edges. The excess degree of $P$ is exactly two, and consequently, any facet must have excess degree at most one thanks to ~\cref{lem:facet-not-all-excess}. This implies that any facet with an even number of vertices must be simple, and with an odd number of vertices must have excess one. In particular, any facet with eight or fewer vertices can only be a cube $\Delta_{1,1,1}$, a 5-wedge $J_3$, a pentasm, $\Sigma_3$, a prism $\Delta_{2,1}$, a quadrilateral pyramid $M_{2,1}$ or a simplex {\cref{lem:CountSimpleP}}. Note that no facet can have nine vertices, because it would have at least 14 edges, and $P$, being a pyramid thereover would have at least 23 edges, contrary to hypothesis. We now distinguish several cases.

{\bf Case 1.} Some facet $F$ is the cube.

Denote by $u$ and $v$ the two adjacent vertices outside $F$.

Consider a quadrilateral $R_1$ of $F$ and the quadrilateral $R_2$ of $F$ opposite to $R_1$. Let $F_1$ and $F_2$ be the other facets containing $R_1$ and $R_2$, respectively. If $F_1$ is a pyramid with apex $u$, $F_2$ is another pyramid with apex $v$. The face lattice of the resulting polytope is easily reconstructed from the graph; it is $A_4$ (cf.~\cref{rmk:Ad-Facets}). If no quadrilateral of $F$ is contained in a pyramid, then each quadrilateral must be contained in a simplicial 3-prism which contains both $u$ and $v$. A moment's reflection shows that this is impossible.

{\bf Case 2.} Some facet $F$ is a 5-wedge.

First note that every vertex in $F$ has exactly one neighbour outside $F$. Thus the other facet corresponding to each such ridge is a Shephard polytope. Denote by $u$ and $v$ the two adjacent vertices outside $F$.

Consider the other facet $F'$ containing one of the triangles. It cannot be a prism, as there are only two vertices outside $F$. Being a Shephard polytope, we see that $F'$  is tetrahedron with apex $u$. Likewise, the other facet containing the other triangle in $F$ is also a tetrahedron, with apex $v$.

As in  Case 1, the other facet corresponding to either quadrilateral face of $F$ must be a pyramid or a prism. It follows that the two remaining vertices on $F$ must be adjacent to the same vertex outside $F$, say $u$. Thus, one quadrilateral is contained in a pyramid and the other is contained in a 3-prism. The face lattice can be reconstructed with no difficulty from the graph obtained; the polytope is $B_4$, whose face lattice is detailed in \cref{rmk:Bd-Facets}.

{\bf Case 3.} Some facet $F$ is the 3-pentasm.

First note that the other facet containing the pentagon can only be a 5-wedge or another pentasm. If it  is a 5-wedge, we can refer to Case 2. The remaining option is that it  must be another pentasm. The two pentasms between them have 11 edges each, of which five are common to both. By \cref{thm:excess-d-2-full-story}, the nonsimple vertex in each pentasm must be adjacent to the nonsimple vertex in the other pentasm. Thus the union of the two pentasms contains 18 edges altogether. The remaining vertex outside $F$ has degree at least four, giving at least 22 edges altogether. This cannot be.

{\bf Case 4.} Some facet $F$ is $\Sigma_3$.

We may assume that no facet is a cube, 5-wedge or pentasm. Denote by $u$ the vertex of excess degree one in $F$.

From \cref{thm:excess-d-2-full-story} it ensues that $P$ is decomposable. Let us fix a triangular ridge $T$ in $F$ and consider the other facet $F'$ containing it. If $F'$ were a quadrilateral pyramid, then every vertex outside $F$ would be simple in $P$, and thus, every facet of $P$ would touch $F'$, implying by \cref{thm:McMullen-Decomp} that $P$ is indecomposable, a contradiction.

Thus the two other facets corresponding to the triangular ridges in $F$ can only be simplices or prisms.  If both are prisms, they contain a common triangle (the three vertices outside $F$) and a common edge (the one from $u$ running outside out of $F$), meaning they must lie in the same 3-dimensional affine space. This is also impossible.

Suppose both are simplices. Then one of the vertices outside $F$ will be adjacent to all three vertices in one the two triangles, another  will be adjacent to all three vertices in the other triangle, and the third  will be adjacent to both of the other vertices in $F$. It is routine to verify that $P$ has the same face lattice as $\Sigma_4$.

In the remaining case, there must be exactly seven edges leaving $F$ and the three vertices outside $F$ are pairwise adjacent. We can denote one of them by $v$, which sends three edges to $F$; while the remaining two vertices each send two edges to $F$. Note that the vertex $v$ must be adjacent to the vertex $u$ by \cref{lem:simpleVertexOutside}. The only possible pattern of connections between $F$ and the external vertices gives us the graph and face lattice of $C_4$; see \cref{rmk:Cd-Facets} and \cref{fig:10v-21e}.

{\bf Case 5.} Every facet is a prism, a quadrilateral pyramid or a simplex.

By \cref{thm:excess-d-2-full-story} no every facet can be simple. So it suffices to show that no facet can be a pyramid in this case. Suppose some facet $F$ is a pyramid based on a quadrilateral ridge $Q$. Denote by $u$ the apex of this pyramid and let $G$ be the other facet containing $Q$. Then $G\setminus Q$ contains at most two vertices. If $u$ has degree six in $P$, then every other vertex in $P$ is simple. If $u$ has degree five in $P$, then the unique vertex adjacent to $u$ outside $F$ must be nonsimple (recall \cref{lem:simpleVertexOutside}). In either case, every vertex in $Q$ is simple in $P$. Thus all the neighbours of vertices of $Q$ are in $F\cup G$. If we remove the vertices in $(G\setminus Q)\cup\{u\}$ from the graph of $P$, the resulting graph will be disconnected, contrary to Balinski's Theorem.
\end{proof}

\section{$(f_0,f_1)$-projections of 5-polytopes}

We characterise all pairs $(f_0,f_1)$ for which there exists a 5-polytope with $f_0$ vertices and $f_1$ edges. As in \cite{PinUgoYos15}, let us define $E(v,d)=\{e:$ there is a $d$-polytope with $v$ vertices and $e$ edges$\}.$ In this notation, we determine $E(v,5)$ precisely, for all values of $v$.
In particular, we show that $\min E(f_0,5) = \h(5f_0 + 3)$ if $f_0$ is odd, and $\min E(f_0,5) =  \frac{5}{2}f_0$ if
$f_0$ is even and not eight. It is well known that $\min E(8,5) =  22$.

\begin{lemma}
Besides the simplex, there is a simple 5-polytope with $f_0$ vertices if, and only if, $f_0$ is even and $f_0\ge10$.
\end{lemma}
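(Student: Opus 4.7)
The plan is to handle necessity and sufficiency separately, both by very short arguments.

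For necessity, recall that simplicity forces $2f_1 = 5f_0$, so $f_0$ must be even. Combined with \cref{lem:CountSimpleP}(i), which tells us the only simple $5$-polytope with fewer than $10$ vertices is the simplex $\Delta_{0,5}$ (and $\Delta_{0,5}$ has $6$ vertices, not an even value in the relevant range), we conclude that if $P$ is simple, non-simplex, and $5$-dimensional, then $f_0(P)$ is even and $f_0(P)\ge 10$.

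For sufficiency, I would exhibit two base examples and then bootstrap by truncation. The simplicial prism $\Delta_{1,4}$ is a simple $5$-polytope with $10$ vertices, and $\Delta_{2,3}$ is a simple $5$-polytope with $12$ vertices. To cover every even value from $10$ upward, I invoke the standard fact that truncating a simple vertex of a simple $d$-polytope produces another simple $d$-polytope in which the truncated vertex is replaced by a $(d-1)$-simplex facet on $d$ new vertices, giving a net gain of $d-1$ vertices (here $d-1=4$). Starting from $\Delta_{1,4}$ and iterating truncation at simple vertices yields simple $5$-polytopes on $10,14,18,22,\ldots$ vertices; starting from $\Delta_{2,3}$ yields simple $5$-polytopes on $12,16,20,24,\ldots$ vertices. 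The union of these two arithmetic progressions with common difference $4$ is exactly the set of even integers at least $10$.

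The only non-routine points to verify are that truncation of a simple vertex of a simple polytope indeed preserves simplicity (which follows from \cref{lem:vertex-all-neighbours-simple} applied at the truncated vertex, together with \cref{lem:face-all-neighbours-simple} showing that the new facet is a simple $(d-1)$-simplex and that the new vertices are simple in the truncated polytope) and that simple vertices are always available to truncate, which is immediate since every vertex of a simple polytope is simple. No serious obstacle is anticipated; the whole argument is a two-base induction once \cref{lem:CountSimpleP}(i) is in hand.
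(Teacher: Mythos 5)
Your proof is correct and follows essentially the same route as the paper: parity via $2f_1=5f_0$ plus the fact that a non-simplex simple $5$-polytope has at least $2d=10$ vertices for necessity, and the base examples $\Delta_{1,4}$ and $\Delta_{2,3}$ together with repeated vertex truncation (each adding four vertices) for sufficiency. The only cosmetic difference is that you cite \cref{lem:CountSimpleP}(i) explicitly where the paper simply notes $8<2d$, which is the same fact.
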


\begin{proof}
Simplicity requires $2f_1=5f_0$, so $f_0$ must be even. The simplex $\Delta_{5}$, the prism $\Delta_{1,4}$ and our other friend $\Delta_{2,3}$ are all simple, with 6, 10 and 12 vertices respectively. Truncating one vertex of any simple 5-polytope gives another simple 5-polytope with four more vertices and ten more edges. Thus we obtain all even numbers from ten onwards. There is no example with eight vertices because $8<2d$.
\end{proof}

\begin{theorem}
There is a nonsimple
5-polytope with $f_0$ vertices and $f_1$ edges, if, and only if, $$\h(5f_0 + 3) \le f_1 \le {f_0\choose2}$$
and $(f_0,f_1)\ne (9, 25),(13, 35)$.

\end{theorem}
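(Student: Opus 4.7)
The upper bound $f_1\le\binom{f_0}{2}$ is trivial. For the lower bound, any nonsimple $5$-polytope $P$ satisfies $\xi(P)=2f_1-5f_0\ge d-2=3$ by the Excess Theorem (\cref{thm:excess}), so $f_1\ge\h(5f_0+3)$. Note that $\xi$ has the same parity as $f_0$, so for even $f_0$ this automatically tightens to $\xi\ge4$. It remains to construct polytopes for every admissible pair other than $(9,25)$ and $(13,35)$, and to show these two exceptions are unrealisable.

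\textbf{Sufficiency.} I would split the admissible interval at $f_1=3f_0-1$. For the high range $3f_0-1\le f_1\le\binom{f_0}{2}$ with $f_0\ge7$, take a pyramid over a $4$-polytope with $f_0-1$ vertices and $f_1-(f_0-1)$ edges; existence is guaranteed by Gr\"unbaum's $(f_0,f_1)$-table for $d=4$, which gives $E(v,4)\supseteq[2v+2,\binom{v}{2}]$ for $v\ge6$, and a pyramid over a $4$-polytope with $v$ vertices and $e$ edges has $v+1$ vertices and $e+v$ edges. For the low range $\h(5f_0+3)\le f_1\le 3f_0-2$, each admissible $f_1$ corresponds to a specific excess $\xi\in\{3,4,\dots,f_0-4\}$ of the correct parity, which I would realise using the explicit families of \S2.2: triplices $M_{k,5-k}$ and $d$-polytopes with $d+2$ or $d+3$ vertices for very small $f_0$; the pentasm, $A_5$, $B_5$, $C_5$, $\Sigma_5$, $J_5$ and $\Gamma_{m,n}$ for moderate $f_0$; and pyramids over the $4$-polytopes of \cref{thm:d=4} (or their truncations) for the higher edge counts. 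Stacking a vertex on a simplex facet of a $5$-polytope increments $(f_0,f_1)$ by $(1,5)$ and leaves $\xi$ unchanged, so a ``ladder'' of stackings propagates each starting example through all larger $f_0$ along a fixed excess; intermediate truncations at simple vertices then fill any remaining gaps. A case-by-case verification confirms that every admissible pair (except the two exceptions) is obtained.

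\textbf{The exceptions, and the main obstacle.} Both $(9,25)$ and $(13,35)$ correspond to excess exactly $\xi=5=d$. Here I would argue non-existence by structural case analysis on the multiset of vertex excesses, which is severely constrained: there are at most five nonsimple vertices, each of degree at most $f_0-1$ and with per-vertex excess summing to $5$. For $(9,25)$, where $f_0=d+4=9$, each possible distribution can be eliminated by combining the classification of $5$-polytopes with few vertices, the restrictions on vertex figures of nonsimple vertices (\cref{lem:vertex-all-neighbours-simple,lem:CountSimpleP}), and the facet-excess inequality (\cref{lem:facet-not-all-excess}). For $(13,35)$, similar reasoning applies, reinforced by the Lower Bound Theorem (which rules out simplicial examples, since a simplicial $5$-polytope with $13$ vertices has at least $50$ edges). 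This step is the main obstacle: unlike the regimes $\xi=d-2$ and $\xi=d-1$ treated in \cref{thm:excess-d-2-full-story,thm:excessdecomp}, the excess $\xi=d$ admits no uniform structural theorem, so each exception must be eliminated by a bespoke combinatorial argument rather than a single general lemma.
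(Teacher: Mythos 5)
Your necessity bound and your use of pyramids over Gr\"unbaum's 4-dimensional $(f_0,f_1)$-table are fine (and your pyramid range $f_1\ge 3f_0-1$ for $f_0\ge7$ is essentially the paper's first step). But the proposal has a genuine gap where the theorem is actually hard: the non-existence of $(9,25)$ and $(13,35)$ is never proved. You sketch ``a structural case analysis on the multiset of vertex excesses'' and yourself flag it as the main obstacle, but no such analysis is carried out, and it is not a routine one: since the excess here is exactly $d=5$, there is no structural theorem analogous to \cref{thm:excess-d-2-full-story} or \cref{thm:excessdecomp} to lean on. In the paper, $(9,25)$ is quoted from \cite[Thm.~19]{PinUgoYos15}, and $(13,35)$ takes the entire Appendix (\cref{thm:v13-e35}): a case analysis on the largest facet, resting on \cref{thm:d=4} (the classification of 4-polytopes with ten vertices and 21 edges), \cref{lem:4-polytope-11v-23e} (proved via Gale diagrams), \cref{lem:10v,lem:9v,lem:8v}, Balinski's theorem, and Kalai's lower bound theorem for 2-simplicial polytopes. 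Without this material the ``only if'' direction of the statement is simply unproved.

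There is also a concrete error in your construction for the low range $\frac{1}{2}(5f_0+3)\le f_1\le 3f_0-2$: stacking a vertex on a simplex facet of a 5-polytope adds $(1,5)$ to $(f_0,f_1)$ and hence \emph{increases} $\xi=2f_1-5f_0$ by $5$; it cannot preserve the excess, by the very parity observation ($\xi\equiv f_0\pmod 2$) you made in the necessity part. The excess-preserving move is truncation of a simple vertex, which adds $(4,10)$, so each excess class needs base examples in each residue class of $f_0$ modulo $4$ -- and for $\xi=5$ with $f_0\equiv 1\pmod 4$ the two natural base cases are exactly the nonexistent pairs $(9,25)$ and $(13,35)$. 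Thus $(17,45)$ requires a separate construction, which your ``case-by-case verification confirms'' never identifies; the paper obtains it by truncating a simple edge of a capped 5-prism (adding $(6,16)$), and one could alternatively stack a vertex on the simplex facet of the simple polytope $\Gamma_{2,3}$, which has $(16,40)$. By contrast, the paper's reduction is explicit: for $f_0\ge 12$ and $\frac{1}{2}(5f_0+3)\le f_1<3f_0-3$, subtract $k$ copies of $(4,10)$ to land in a window $3f_0'-3\le f_1'<3f_0'$ covered by pyramids (using the existence of a simple vertex when $f_1'<3f_0'$), the only breakdown being the $(9,25)$/$(17,45)$ family just described, plus the sporadic pairs $(11,29)$ and $(11,30)$ realised by the pentasm and a capped prism.
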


\begin{proof} Clearly we must have $\frac{5}{2}f_0\leq f_1 \leq {f_0 \choose 2}$. Any pair with $2f_1-5f_0=1$ or 2 is impossible by the excess theorem (\cref{thm:excess}). The case $(9,25)$ is also impossible by \cite[Theorem 19]{PinUgoYos15}; the proof of that result becomes somewhat shorter if we set $d=5$.

All other pairs except $(13,35)$ are possible. How to construct them? The first step is to look at all 4-polytopes and construct pyramids over them. If a $d$-polytope has $f_0$ vertices and $f_1$ edges, then a pyramid thereover is a $(d+1)$-polytope with $f_0+1$ vertices and $f_0+f_1$ edges. Gr\"unbaum \cite[Theorem 10.4.2]{Gru03} showed that
$E(6,4)=[13,15]$,
$E(7,4)=[15,21]$,
$E(8,4)=\{16\}\cup[18,28]$,
$E(9,4)=[18,36]$,
$E(10,4)=[21,45]$, and
$E(f_0,4)=[2f_0,{f_0\choose2}]$ for all $f_0\ge11$.

For $f_0\le 11$,  building pyramids on 4-polytopes with $f_0-1$ vertices shows that
$E(7,5)=[19,21]$,
$E(8,5)\supseteq[22,28]$,
$E(9,5)\supseteq\{24\}\cup[26,36]$,
$E(10,5)\supseteq[27,45]$, and
$E(11,5)\supseteq[31,55]$.

Thus we have all the alleged examples for $f_0\le11$ except $(11,29)$ and $(11,30)$. But these are exemplified by the   pentasm and the capped prisms respectively. Note that these three examples all come from slicing a simple vertex off something else:  a triplex $M_{2,3}$ or a bipyramid over a 4-simplex, respectively.

For $f_0\geq 12$, pyramids give all examples with $f_1\geq 3f_0-3$. More precisely, for each such $f_0$, we know that $E(f_0-1,4)=[2f_0-2,{f_0-1\choose2}]$ and hence $E(f_0,5)\supset[3f_0-3,{f_0\choose2}]$. The cases $\h(5f_0 + 3)\leq f_1<3f_0-3$ require a little explanation.

Note that if $f_1<3f_0$, then a 5-polytope with $f_0$ vertices and $f_1$ edges must have at least one simple vertex.

So suppose $f_0\ge12$ and $\h(5f_0 + 3)\leq f_1<3f_0-3$.  Let $k$ be the smallest integer $\ge\h(3f_0-3-f_1)$. Then $f_1-3f_0+2k$ is either $-3$ or $-2$. Now put $f_1'=f_1-10k$ and $f_0'=f_0-4k$. Clearly $3f_0'-3\le f_1'<3f_0'$, so there is a polytope with $f_1'$ edges and $f_0'$ vertices, at least one of which is simple, unless $f_1'=25$  and $f_0'=9$. Truncating a simple vertex $k$ times then gives a polytope with $f_1$ edges and $f_0$ vertices.

The case $f_0=17,f_1=45$ comes from slicing a simple edge off a capped 5-prism; note that truncating a simple edge increases $f_0$ by six, and $f_1$ by 16. All remaining cases, i.e. $f_0=17+4k,f_1=45+10k$ for suitable $k$, then come from repeated truncation of simple vertices.

It only remains to prove the unfeasibility of $(f_0,f_1)=(13,35)$. This has a lengthy proof, to be given  separately.
\end{proof}

We formally state this impossibility result here, but postpone its proof  to  \cref{sec:13n-35e}.

\begin{theorem}
\label{thm:v13-e35}
There is no 5-polytope with $13$ vertices and 35 edges.
\end{theorem}



\appendix
\section{Unfeasibility of $(d,f_0,f_1)=(5,13,35)$}
\label{sec:13n-35e}

This appendix completes the proof of \cref{thm:v13-e35}. We proceed by examining  the numbers of vertices in the facets of such a hypothetical polytope. The proof of Kusunoki and Murai \cite {KusMur17}, is quite different, considering  instead the possible degrees of the vertices of the polytope. We first require the main result from \cite{PinUgoYos15}, which considered polytopes with a fixed number of vertices, not exceeding $2d$, and characterised those with the minimal number of edges. It is convenient to rephrase this result in terms of the excess degree.

\begin{theorem}[{\cite[Thm.~6]{PinUgoYos15}}]\label{thm:triplexes} Let $P$ be a $d$-polytope with $d+k$ vertices with $1\le k\le d$. Then  $$\xi(P)\ge(k-1)(d-k),$$ ( equivalently $f_1(P)\ge\h d(d+k)+\h(k-1)(d-k)$), and equality is obtained if, and only if, $P$ is a $(k,d-k)$-triplex.\end{theorem}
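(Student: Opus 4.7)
My plan is to separate the two directions.  The fact that the $(k,d-k)$-triplex $M_{k,d-k}$ achieves equality is a direct calculation: starting from the simplicial $k$-prism $\Delta_{1,k-1}$ (which is simple, hence has excess $0$, and has $2k$ vertices), iterate \cref{lem:excess-few-gaps-facets} $d-k$ times.  At stage $j$, the polytope has dimension $k+j$, has $2k+j$ vertices, and the lemma yields $\xi_{j}=\xi_{j-1}+(2k+j-1)-(k+j)=\xi_{j-1}+(k-1)$, so $\xi_{d-k}=(d-k)(k-1)$, as required.

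For the inequality and uniqueness, I would proceed by induction on $d$, handling the two extreme values of $k$ separately.  The case $k=1$ is immediate: the only $d$-polytope with $d+1$ vertices is the simplex $\Delta_d=M_{1,d-1}$, with $\xi=0=(1-1)(d-1)$.  The case $k=d$ is also easy: the bound $\xi(P)\ge0$ is trivial, and equality forces $P$ to be simple.  By \cref{lem:CountSimpleP}(ii), the only simple $d$-polytope with $2d$ vertices is the simplicial prism $\Delta_{1,d-1}=M_{d,0}$, so uniqueness holds.

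For the remaining range $2\le k\le d-1$, the strategy is to show that any extremal $P$ (one achieving the claimed minimum) must be a pyramid.  Once this is established, \cref{lem:excess-few-gaps-facets} applied to $P$ with base a facet $F$ of $(d-1)+k$ vertices gives $\xi(P)=\xi(F)+k-1$.  Applying the induction hypothesis to the $(d-1)$-polytope $F$ (with $k'=k\le d-1-1+1$, still in range) yields $\xi(F)\ge(k-1)(d-1-k)$, with equality iff $F=M_{k,d-1-k}$.  Adding $k-1$ gives $\xi(P)\ge(k-1)(d-k)$ with equality iff $P$ is a pyramid over $M_{k,d-1-k}$, that is, $P=M_{k,d-k}$.

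The main obstacle is the structural claim that an extremal $P$ in the range $2\le k\le d-1$ must be a pyramid.  I would pick a facet $F$ of $P$ with the minimum number of vertices $m$; one always has $m\ge d$.  If $m=d+k-1$, there is a unique vertex outside $F$, forcing $P$ to be a pyramid, which is what we want.  Otherwise $m\le d+k-2$, and $P\setminus F$ contains at least two vertices $x,y$.  The plan is to double-count edges: vertices of $F$ contribute $\xi(F)$ to the total excess (where $\xi(F)\ge(m-d)(2d-1-m)$ by induction applied to $F$, if $m\le 2d-2$, and even more if $m$ is larger), while each vertex outside $F$ must have degree at least $d$, requiring a certain number of edges connecting $P\setminus F$ to $F$ plus internal edges of $P\setminus F$.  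Carefully tracking these contributions, one should obtain a strict inequality $\xi(P)>(k-1)(d-k)$ unless everything collapses to the pyramid case.  Plausibly one needs a sub-induction here; the cases where $m$ is close to $d$ (so $F$ is nearly a simplex) and where $P\setminus F$ is small but not a single point are the delicate ones, since the simple edge-count $2f_1(P)\ge d(d+k)$ is too weak and one has to invoke \cref{lem:Non-DisjointFacets} (or its consequences) to force extra excess at vertices lying in intersections of many facets.
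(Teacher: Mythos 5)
First, a contextual point: the paper does not prove this statement at all. It is imported verbatim as Theorem~6 of the companion paper \cite{PinUgoYos15}, so there is no in-paper argument to compare yours against. Judged on its own terms, your proposal correctly disposes of the easy parts: the computation that $M_{k,d-k}$ attains $\xi=(k-1)(d-k)$ by iterating \cref{lem:excess-few-gaps-facets} is right; the case $k=1$ is trivial; and the case $k=d$ reduces correctly to \cref{lem:CountSimpleP}(ii) (with the caveat that for $d=3$ the interval $[2d,3d-4]$ there is empty, so you need the separate elementary fact that the triangular prism is the only simple $3$-polytope with six vertices). The reduction you describe for $2\le k\le d-1$ --- if every minimiser is a pyramid, then \cref{lem:excess-few-gaps-facets} plus induction on $d$ finishes the inequality and the rigidity --- is also logically sound, provided you phrase the structural claim as ``every $P$ with $\xi(P)\le(k-1)(d-k)$ is a pyramid'' rather than presupposing that the minimum equals $(k-1)(d-k)$.

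The genuine gap is that this pyramid claim, which is essentially the entire substance of the theorem, remains a plan rather than a proof. Your sketch (take a facet $F$ with the minimum number $m$ of vertices, apply induction to $F$, and double-count edges leaving $F$) does not go through as stated: the crude count $2f_1\ge d(d+k)$ loses exactly the quantity you need to control, and the correction terms --- extra excess at vertices lying in low-dimensional intersections of facets, via \cref{lem:Non-DisjointFacets} --- interact with the choice of $F$ in a way that forces a genuine case analysis (for instance, $F$ a simplex with several vertices outside it, or many facets each missing exactly two vertices). There is also a small arithmetic slip: induction applied to a $(d-1)$-dimensional facet with $m=(d-1)+k'$ vertices gives $\xi(F)\ge(m-d)(2d-2-m)$, not $(m-d)(2d-1-m)$. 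The actual proof of this result in \cite{PinUgoYos15} is a lengthy induction occupying most of that paper; your remark that ``plausibly one needs a sub-induction'' is an accurate self-assessment, but it means the central step is missing.
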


We   now proceed via a number of lemmas, which restrict the options for facets for hypothetical 5-polytopes $P$ with $13$ vertices and 35 edges.

\begin{lemma}
\label{lem:4-polytope-11v-23e}
Any 4-polytope $P$ with 11 vertices and 23 edges must contain a facet with 8 vertices. Moreover, the facet contains only simple vertices, and  the other three vertices form a triangular face.
\end{lemma}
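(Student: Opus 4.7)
The excess degree is $\xi(P) = 2f_1 - df_0 = 46 - 44 = 2 = d-2$, so \cref{thm:excess-d-2-full-story} splits the analysis into two cases: either (i) $P$ has a unique nonsimple vertex $v$ of degree~$6$ whose vertex figure is the triangular prism $\Delta_{1,2}$ (by \cref{lem:vertex-all-neighbours-simple}), contained in two simple facets $F_1,F_2$ with $F_1\cap F_2=\{v\}$; or (ii) $P$ has two nonsimple vertices $u,v$ of degree~$5$, joined by an edge that is the intersection of two simple facets $F_1,F_2$. In both cases $P$ is a Shephard polytope with a further facet of excess $d-3=1$. Since the apex of a pyramid with base of $10$ vertices would require degree $10$, exceeding the maximum degree~$6$ of~$P$, the polytope is not a pyramid and hence is decomposable by \cref{thm:shp}.

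The plan is to locate a facet $F$ with Shephard's property, to show $|F|=8$ and that all vertices of~$F$ are simple in~$P$, and then to deduce the triangular face from edge counting. A simple vertex of~$P$ has degree~$3$ in every facet containing it, so Shephard is automatic for such a vertex. A nonsimple vertex~$w$ lying in~$F$ would force $\deg_F(w)=\deg_P(w)-1$, contributing excess~$\deg_P(w)-4$ to $\xi(F)$. Since $\xi(F)\leq\xi(P)-1=1$ by \cref{lem:facet-not-all-excess}, $v$ cannot lie in~$F$ in case~(i), and at most one of $u,v$ can lie in~$F$ in case~(ii); in that sub-case $\xi(F)=1$ and $|F|$ is odd. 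Otherwise $F$ is simple with $|F|$ even.

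The main obstacle is pinning down $|F|=8$. Since $P$ is not a pyramid, $|F|\leq 9$, and parity rules out $|F|=9$ when $F$ is simple. In case~(i), $|F|\in\{4,6,8\}$; I would eliminate $|F|\in\{4,6\}$ by using the prism vertex figure of~$v$ together with Shephard: any neighbour~$x$ of~$v$ lying in~$F$ must have~$v$ as its unique outside neighbour, so~$x$ sends no edges to the other outside simple vertices. Cataloguing the ways $v$'s $6$ neighbours can split between~$F$ and the outside, and combining this with the structure of $F_1,F_2$ (which are tetrahedra, triangular prisms, cubes, or $J_3$'s by \cref{lem:CountSimpleP}), produces contradictions with the remaining outside vertices' degrees for $|F|\in\{4,6\}$. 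Case~(ii) is handled analogously via the square-pyramid vertex figures at~$u$ and~$v$; the odd sub-case $|F|\in\{5,7,9\}$ is ruled out by showing that the Shephard edge from the contained nonsimple vertex must land on its nonsimple partner, leaving too little flexibility in the remaining structure.

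Once $|F|=8$, $F$ is a simple $3$-polytope with $8$ vertices, hence $\Delta_{1,1,1}$ or $J_3$ by \cref{lem:CountSimpleP}(v), and in particular $f_1(F)=12$. Shephard yields $8$ edges from~$F$ to the outside, leaving exactly $23-12-8=3$ edges among the $3$ outside vertices, which are therefore pairwise adjacent. In case~(i) these outside vertices are $v$ together with two simple neighbours $s_1,s_2$ of~$v$ satisfying $s_1\sim s_2$; a vertex-figure analysis at~$v$, combined with the constraint that $F$ is a simple $3$-polytope touching the remaining faces of~$P$, forces $\{s_1,s_2\}$ to be an edge of the prism $P/v$, so that $\{v,s_1,s_2\}$ is a triangular $2$-face of~$P$. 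Case~(ii) is similar, giving a triangular $2$-face along the edge~$uv$. This completes the characterization.
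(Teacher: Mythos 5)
Your setup (excess $2=d-2$, the two cases of \cref{thm:excess-d-2-full-story}, ruling out the pyramid option, decomposability) is fine, but the heart of the argument is missing: the steps ``cataloguing the ways $v$'s six neighbours can split\ldots produces contradictions for $|F|\in\{4,6\}$'', ``ruled out by showing\ldots too little flexibility'', and ``a vertex-figure analysis at $v$\ldots forces $\{v,s_1,s_2\}$ to be a $2$-face'' are assertions, not proofs, and they are exactly where all the work lies. Worse, the elimination of $|F|\in\{4,6\}$ cannot succeed in the form you propose, because you have not pinned down \emph{which} facet with Shephard's property you are analysing, and for an arbitrary such facet the claim is false. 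Take the polytope of type (a) in the paper's own proof: its facets are a cube $C$, two copies of $\Sigma_3$, three triangular prisms and a quadrilateral pyramid, with the three vertices $x,y,z$ outside $C$ forming a triangle, $z$ the unique nonsimple vertex adjacent to a quadrilateral face $Q$ of $C$, and $x,y$ each adjacent to two vertices of the opposite face $Q'$. The prism facet $\mathrm{conv}(Q'\cup\{x,y\})$ then has \emph{every} vertex with exactly one neighbour outside it (each vertex of $Q'$ sends only its third cube edge out, and $x,y$ send only the edge to $z$ out), i.e.\ it is a $6$-vertex facet with Shephard's property sitting inside a genuine polytope satisfying the hypotheses. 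So no contradiction can be derived from ``$F$ is a Shephard facet with $6$ vertices'', and your case analysis collapses unless you first construct a specific facet with additional properties --- which you never do. The same objection applies to the final step: three pairwise adjacent outside vertices need not form a $2$-face, and nothing you write establishes that they do.

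For comparison, the paper does not argue facet-by-facet from Shephard's property at all. It observes that a facet of excess one has an odd number of vertices, so (once $8$-vertex facets are excluded) there is a facet with $7$ or $5$ vertices; in the $7$-vertex case it shows $P$ has exactly seven facets, hence $f$-vector $(11,23,19,7)$, and then invokes the known classification (via Gale diagrams, \cref{fig:11-23-19-7}) of the three combinatorial types with dual $f$-vector $(7,19,23,11)$, verifying the conclusion for each explicit face lattice; the $5$-vertex case is killed with Balinski's theorem. Some input of this strength --- either the classification or a genuinely carried-out structural analysis around the nonsimple vertices --- is needed; your proposal replaces it with placeholders, so as it stands there is a real gap.
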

\begin{proof}
Suppose otherwise; then every 3-face in $P$ has at most seven vertices.
Since the excess of $P$ is two, \cref{thm:excess-d-2-full-story} tells us that it is decomposable and has a facet with excess one. Moreover, \cref{lem:facet-not-all-excess} tells us that no facet has excess two, so every facet of $P$ with an odd number of vertices has excess one.

Suppose there is a 3-face $F$ with seven vertices. Then $F$ must have 11 edges, and hence there at least seven edges leaving $F$, and at most five edges joining the four vertices outside $F$. In particular the four vertices outside $F$ do not constitute a facet, and $F$ thus intersects every facet. Since $F$ is not simple, \cref{lem:Non-DisjointFacets}(ii) implies that $F$ has Kirkman's property, i.e. it intersects every other facet in a ridge. Since $F$ has six ridges, we conclude that $P$ has seven facets.

In particular, $P$ has $f$-vector $(11,23,19,7)$. It is known that there are only three combinatorial types of polytope with $f$-vector $(7,19,23,11)$; their Gale diagrams appear in \cite[Fig. 5]{Gru70}, and we have reproduced them here in \cref{fig:11-23-19-7}. Our $P$ must be dual to the polytopes corresponding to these diagrams. Studying them then reveals the face lattice of $P$; we omit the routine calculations. For the record, we note that the facets of these three examples are respectively

\begin{figure}
\includegraphics{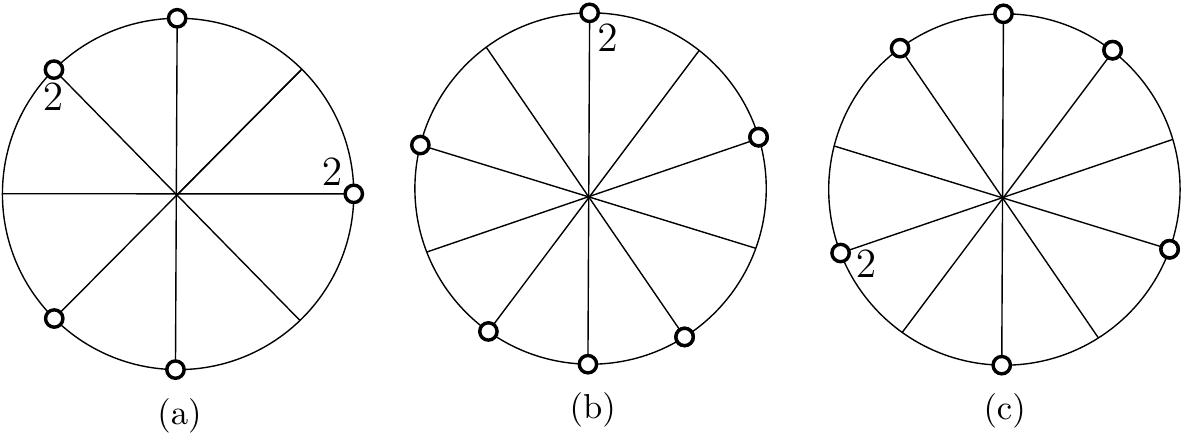}
\caption{Gale diagrams of the 4-polytopes with $f$-vector $(7,19,23,11)$.}\label{fig:11-23-19-7}
\end{figure}

\noindent (a) a cube, two copies of $\Sigma_3$, three prisms and a quadrilateral pyramid;

\noindent (b) a 5-wedge, two pentasms and four prisms;

\noindent (c) a 5-wedge, two pentasms, two prisms,  $\Sigma_3$ and a simplex.

A diagram  $P$ of case (a) appears in \cite[10.4.5]{Gru70}; it is easy to see that either copy of $\Sigma_3$ must intersect the cube in a quadrilateral face, containing only simple vertices; thus the other three vertices must be a triangular face. The same reasoning applies in case (c). In case (b), the pentasms must be other facets corresponding to the pentagonal ridges, and so the other facet corresponding to either triangle in the 5-wedge must a prism, and its opposite end is a triangular face disjoint from the 5-wedge.

Finally, suppose there is a facet $F$ with five vertices, but no facet with seven vertices; we will see that this is impossible. Such an $F$ has excess one and is a quadrilateral pyramid with apex $u$.
Suppose there is a nonsimple vertex $v$ in $P\setminus F$. Then, the degrees of $u$ and $v$ in $P$ are five, $u$ and $v$ are adjacent by \cref{lem:simpleVertexOutside}, and there are exactly five edges between $F$ and $P\setminus F$. Consider the other facet $G$ containing the quadrilateral $Q$ of $F$. Then either $G$ is a quadrilateral pyramid with apex $v$ or a simplicial 3-prism. In the former case, the graph of $F\cup G$ wil be disconnected from the rest of $P$. In the latter case, removing  $u$ and the two vertices $G\setminus F$  would disconnect $Q$ from the other vertices of $P$, contradicting Balinski's Theorem.

If instead every vertex in $P\setminus F$ is simple, the degree of $u$ in $P$ will be six   and there will be exactly six edges between $F$ and $P\setminus F$. Again, considering the other facet $G$ containing the quadrilateral $Q$ of $F$, it follows that $G$ is a simplicial 3-prism. But then, removing $u$ and the two vertices $G\setminus F$ would disconnect $Q$ from the rest of $P$, again contradicting Balinski's Theorem.
\end{proof}

\begin{lemma}\label{lem:10v} If a 5-polytope $P$ has $13$ vertices and 35 edges, and $F$ is a facet with ten vertices, then $F$ has 21 edges.
\end{lemma}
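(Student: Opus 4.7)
The plan is to bound $f_1(F)\in\{21,22\}$ and then exclude $f_1(F)=22$ via a case analysis on the configuration of the three vertices of $P$ outside $F$. The excess of $P$ is $\xi(P)=2\cdot 35-5\cdot 13=5$. Since $P$ is nonsimple, \cref{lem:facet-not-all-excess} gives $\xi(F)<5$; because $\xi(F)=2f_1(F)-40$ is even, this forces $\xi(F)\le 4$, and hence $f_1(F)\le 22$. Combined with $f_1(F)\ge 21$ from \cref{thm:d=4}, this gives $f_1(F)\in\{21,22\}$, so it remains to rule out $f_1(F)=22$.

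Suppose $f_1(F)=22$. Let $b$ count the edges between $F$ and its three outside vertices and $c$ the edges among those three. Then $b+c=13$, and since each outside vertex has $P$-degree at least $5$, $b+2c\ge 15$, so $c\in\{2,3\}$. If $c=3$, the outside vertices form a triangle of degree sequence $6,5,5$, giving one nonsimple outside vertex $x$ and two simple ones $y,z$; their $F$-neighbour sets $N_x,N_y,N_z$ have sizes $4,3,3$. The plan is to exploit that $P/y$ and $P/z$ are $4$-simplices, so every pair of their $P$-neighbours spans a $2$-face of $P$; by tracing which of these must be polygons rather than triangles (ruling out external intermediates by disjointness of the neighbour sets in the Shephard sub-case), one forces $F$-edges giving $e_{xy},e_{xz},e_{yz}\ge 3$, and the $6$-vertex vertex figure $P/x$ (which has at least $13$ edges by \cref{thm:triplexes}) forces still more. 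Combined with $\deg_F v\ge 4$ and $\sum_{v\in F}\deg_F v=44$, this should contradict $\xi(F)=4$. Any $v\in F$ with $n_v=0$ must in addition satisfy $\deg_F v\ge 5$ and so contribute at least $1$ to $\xi(F)$, restricting the non-Shephard sub-cases.

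If $c=2$, the outside forms a simple path $x$–$y$–$z$ of simple vertices with $|N_x|=|N_z|=4$ and $|N_y|=3$. Since $[x,z]\notin E(P)$ but $(x,z)$ is an edge of the simplex $P/y$, the corresponding $2$-face through $y$ has length at least four, forcing either a common $F$-neighbour $w\in N_x\cap N_z$ (a distinguished $F$-vertex with $n_w=2$, nonsimple in $P$) or an $F$-edge between $N_x$ and $N_z$ carried by a longer polygon. A parity/counting check on $\sum_{v\in F} n_v=11$ then pins down the overlap structure, and analogous vertex-figure analysis at the simple externals $x$ and $z$ produces further $F$-edges, which combined with $\xi(F)=4$ and the $22$-edge budget of $F$ should fail. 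The principal obstacle in both cases will be to assemble the vertex-figure-derived $F$-edge constraints into a clean numerical contradiction without enumerating the (at least eight) combinatorial types of $4$-polytope with ten vertices and twenty-two edges; I expect $c=2$ to be the more intricate case, owing to the presence of the internally nonsimple vertex $w$ rather than a distinguished external one.
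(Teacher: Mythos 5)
Your reduction to the two cases is sound and matches the paper's: $\xi(P)=5$, \cref{lem:facet-not-all-excess} and parity give $\xi(F)\le 4$, \cref{thm:d=4} gives $f_1(F)\ge 21$, and the count $b+c=13$, $b+2c\ge 15$ forces the three outside vertices to span either a path ($c=2$, all three simple, $F$-valences $4,3,4$) or a triangle ($c=3$, degree sequence $6,5,5$). But from that point on your text is a plan, not a proof: in both cases the decisive contradiction is only announced (``should contradict $\xi(F)=4$'', ``should fail'', ``I expect $c=2$ to be the more intricate case''). The step you defer --- turning vertex-figure information at the simple outside vertices into enough forced $F$-edges to overrun the $22$-edge budget --- is exactly the hard part of the lemma, and nothing in the proposal shows it can be made to close; indeed you flag yourself that you do not see how to avoid an enumeration of $4$-polytopes with ten vertices and $22$ edges. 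So there is a genuine gap: the exclusion of $f_1(F)=22$ is not established.

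For comparison, the paper closes both cases with short structural arguments you do not have. In the path case, since every vertex of a facet has a neighbour outside it, the $11$ outgoing edges force a unique vertex $u$ of $F$ with two outside neighbours; \cref{lem:simpleVertexOutside} then makes $u$ the \emph{only} nonsimple vertex of $P$, of degree ten, and \cref{lem:vertex-all-neighbours-simple} together with \cref{lem:CountSimpleP} (no simple $4$-polytope has ten vertices) gives the contradiction --- note that your branch with the vertex $w\in N_x\cap N_z$ is headed in this direction but you never use the ``exactly one outgoing edge per vertex of $F$'' fact that makes it bite. In the triangle case, the paper first records that $F$ has Kirkman's property (otherwise at least $13$ edges would leave $F$), then takes the facet $G$ containing the two simple outside vertices but not the degree-six one (it exists because the vertex figure at a simple vertex is a simplex), observes via \cref{lem:simpleVertexOutside} that all vertices of the ridge $R=G\cap F$ are simple in $P$, so $R$ is a simple $3$-polytope with six or eight vertices, and a two-line edge count kills both options (eight vertices forces $f_1(F)=21$). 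If you want to salvage your draft, these are the missing ingredients: the ``one outgoing edge per $F$-vertex'' count, the Kirkman property of $F$, and the use of \cref{lem:CountSimpleP} on a degree-ten vertex figure, rather than the open-ended $2$-face tracing you propose.
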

\begin{proof}
Note that $F$ has Kirkman's property; otherwise the existence of a facet of  intersecting $F$ other than at a ridge would force the existence of at least thirteen edges between $F$ and  the three vertices outside, say $x,y,z$, giving a minimum  of 37  edges of in $P$. This property implies that the number of facets other than $F$ in $P$ coincides with the number of 3-faces of $F$, and that every vertex simple in $F$ is simple in $P$.

The excess of $P$ is five, so the excess of $F$ must be either zero, two or four. We know it cannot be simple by \cref{lem:CountSimpleP}(iv). Suppose the excess is four; then $F$ has 22 edges, and there are 13 edges  outside $F$. Let $x$, $y$ and $z$ denote the vertices in $P\setminus F$. We can assume that $x$ and $y$, and $x$ and $z$ are adjacent.

First suppose that $y$ and $z$ are not adjacent. Then there is a unique vertex $u$ in $F$ sending two edges outside $F$.  The vertices $x$, $y$ and $z$ are all simple. If there were another nonsimple vertex in $F$, then, by \cref{lem:simpleVertexOutside}, the vertex would have two neighbours outside $F$. Consequently, $u$ is the  unique nonsimple vertex of $F$ (and of $P$), with degree ten in $P$. But then, \cref{lem:vertex-all-neighbours-simple}  would imply that the vertex figure of $P$ at $u$  is a simple 4-polytope with ten vertices, which we know to be impossible.

Now suppose that $y$ and $z$ are  adjacent, with $y$ having degree six in $P$. In this case, there are ten edges leaving $F$. Since the vertices $x$ and $z$ are simple, there is a facet $G$ in $P$ containing $x$ and $z$ but not $y$. By \cref{lem:simpleVertexOutside} all vertices in $R:=G\cap F$ are simple in $P$.  If  $R$ has six vertices, it must  be a simplicial 3-prism. Looking at the structure of $F$, we see that nine edges of $F$ come from $R$ and that there are six edges between $R$ and the four vertices in $F\setminus R$. But this counting leaves $7>{4\choose 2}$ edges to be distributed among the four vertices in $F\setminus R$, another contradiction.
Being simple, $R$ must then have eight vertices. But then, there would only be eight edges between $R$ and the two vertices in $F\setminus R$, meaning that $F$ would only have 21 edges.

The remaining case is that $F$ has excess degree two, and thus 21 edges.
\end{proof}

\begin{lemma}\label{lem:9v} Suppose that $P$ is a  5-polytope with $13$ vertices and 35 edges, and that $F$ is a facet with 9 vertices. Then $F$ is either a pentasm or $\Delta_{2,2}$.
\end{lemma}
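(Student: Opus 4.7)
The plan is to apply \cref{lem:facet-not-all-excess} together with the parity of the excess of any 4-polytope to confine $f_1(F)$ to three values, then match the two small values against known classifications and rule out the third by a structural argument.

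First compute $\xi(P)=2\cdot 35-5\cdot 13=5$. Since $P$ is nonsimple and $F$ is a proper face, \cref{lem:facet-not-all-excess} gives $\xi(F)\le 4$; because $\dim F=4$, $\xi(F)=2f_1(F)-36$ is even, so $\xi(F)\in\{0,2,4\}$ and $f_1(F)\in\{18,19,20\}$. If $f_1(F)=18$, \cref{lem:CountSimpleP}(iii) identifies $F$ as the unique simple 4-polytope on 9 vertices, namely $\Delta_{2,2}$. If $f_1(F)=19$, \cref{thm:pentasm}(ii) identifies $F$ as the 4-pentasm. It therefore suffices to rule out $f_1(F)=20$.

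Suppose, for contradiction, that $f_1(F)=20$, so $\xi(F)=4$. First, $F$ must have Kirkman's property in $P$: if some facet $G\ne F$ of $P$ satisfied $\dim(F\cap G)=j\le 2$, then by \cref{lem:Non-DisjointFacets}(ii) we would have $\xi(P)\ge\xi(F)+(3-j)(j+1)\ge 4+3=7$, contradicting $\xi(P)=5$. Hence every other facet of $P$ meets $F$ in a 3-face. Let $X=\{x_1,x_2,x_3,x_4\}$ be the external vertices, $e$ the number of edges inside $X$, and $e_F$ the number of $F$-to-$X$ edges, so that $e+e_F=35-20=15$. Each $x_i$ has degree at least $5$ in $P$, hence $2e+e_F\ge 20$, and so $e\ge 5$; combined with $e\le\binom{4}{2}=6$, exactly two subcases arise: (A) $(e,e_F)=(6,9)$, with the externals forming a $K_4$ of total external excess $1$; (B) $(e,e_F)=(5,10)$, with the externals missing exactly one edge and all simple in $P$.

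In each subcase I would pick a simple external $x$ together with a neighbour $y$ of $x$, and consider the facet $G$ of $P$ through $x$ corresponding to the facet of the 4-simplex vertex figure of $x$ opposite $y$. Kirkman's property for $F$ forces $G\cap F$ to be a 3-face of $F$; the simplicity of each other external lying in $G$ then pins down its neighbours in $G$, forcing all of its $F$-neighbours into $G\cap F$. A careful tabulation of these forced $F$-vertices (using the degree sequences $(3,2,2,2)$ in Subcase A and $(3,3,2,2)$ in Subcase B for the $F$-to-$X$ edges) shows that $G\cap F$ must contain more vertices than are available in a proper 3-face of the 9-vertex polytope $F$, producing the desired contradiction.

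\textbf{Main obstacle.} The bookkeeping in Subcases A and B is where the work lies: one must carry the ``facet opposite a neighbour in the vertex figure'' construction through for several choices of $x$ and $y$, exploit the fact that simple externals have 4-simplex vertex figures (which completely determines their neighbours in any facet containing them), and combine this with the 9-vertex bound on $F$ and the Kirkman property. This detailed case analysis, rather than the initial reduction to $\xi(F)\in\{0,2,4\}$, is the technical heart of the lemma.
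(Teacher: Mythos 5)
Your opening reduction is correct and is exactly the paper's: $\xi(P)=5$, so \cref{lem:facet-not-all-excess} and parity give $\xi(F)\in\{0,2,4\}$, and the cases of 18 and 19 edges are precisely $\Delta_{2,2}$ and the 4-pentasm by \cref{thm:pentasm}. Your split of the remaining case into $(e,e_F)=(6,9)$ (one external vertex of degree six) and $(e,e_F)=(5,10)$ (all externals simple) is also sound and coincides with the paper's case distinction. The genuine gap is the elimination of $f_1(F)=20$ itself, which you acknowledge is ``the technical heart'' but leave as a plan (``I would pick\dots'', ``a careful tabulation\dots shows\dots'') rather than an argument. Moreover, the plan as sketched has a step that fails: ``the simplicity of each other external lying in $G$ \dots forcing all of its $F$-neighbours into $G\cap F$'' is not justified, since a simple vertex of $P$ lying in a facet $G$ has exactly one neighbour outside $G$, and nothing forces that missing neighbour to be external rather than a vertex of $F$; in Subcase A there is additionally a nonsimple external, to which the 4-simplex vertex-figure reasoning does not apply. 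Finally, the intended contradiction ``more vertices than are available in a proper 3-face of $F$'' is unquantified: a 9-vertex, 20-edge 4-polytope could a priori be a pyramid over a cube, whose largest ridge already has eight of the nine vertices, so overfilling a ridge would require forcing essentially all of $F$ into $G$, which the sketch does not achieve.

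For comparison, the paper closes this case without Kirkman's property. In your Subcase B ($e_F=10$, all externals simple), \cref{lem:simpleVertexOutside} forces every vertex of $F$ that is nonsimple in $P$ to send at least two edges out of $F$; since ten outgoing edges are distributed over nine vertices each sending at least one, there is a unique nonsimple vertex of $P$, of degree ten, and \cref{lem:vertex-all-neighbours-simple} would make its vertex figure a simple 4-polytope with ten vertices, impossible by \cref{lem:CountSimpleP}. In your Subcase A ($e_F=9$, a unique nonsimple external $x_1$ of degree six) every vertex of $F$ sends exactly one edge out, so each nonsimple vertex of $F$ is adjacent to $x_1$; counting facets through the simple externals (five containing $x_3$, plus the single facet containing $x_4$ but not $x_3$) accounts for every facet meeting $\{x_2,x_3,x_4\}$, and \cref{rmk:two-vertices} then shows $x_1$ lies in at most five facets, contradicting its nonsimplicity. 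To salvage your route you would need to repair the ``pinned neighbours'' step and actually carry out the tabulation; as written, the case $\xi(F)=4$ is not disposed of.
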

\begin{proof}
Since $P$ has excess five, it is clear that the 4-dimensional facet $F$ either has excess zero (18 edges), excess two (19 edges) or excess four (20 edges). Denote by $x_1$, $x_2$, $x_3$ and $x_4$ the vertices outside $F$.  We need to eliminate the possibility that the facet $F$ has 20 edges; suppose it has.

If every vertex in $P\setminus F$ were simple in $P$, then by \cref{lem:simpleVertexOutside} every nonsimple vertex of $P$ which is contained in $F$ would send two edges outside $F$. In this case, there would be a unique nonsimple vertex in $F$ (and in $P$), say $u$, with degree ten in $P$. But then, \cref{lem:vertex-all-neighbours-simple}  gives that the vertex figure of $P$ at $u$ would be a simple 4-polytope with ten vertices, which is ruled out by \cref{lem:CountSimpleP}.

Consequently, there is a unique nonsimple vertex  in $P\setminus F$, say $x_1$, which has degree six in $P$ and is adjacent to to every nonsimple vertex lying in $F$. Furthermore, there are exactly nine edges between $P\setminus F$ and $F$, and exactly six edges shared among the vertices $x_1$, $x_2$, $x_3$ and $x_4$.  We count the number of facets involving the vertices $x_1$, $x_2$, $x_3$ and $x_4$. There are exactly five facets containing $x_3$, since $x_3$ is simple. Since $x_4$ is simple, there is exactly one facet containing $x_4$ but not $x_3$; this facet also contains $x_1$ and $x_2$. In this way, we have counted all the facets containing the vertices $x_2$, $x_3$ and $x_4$. Out of of the aforementioned six facets, the nonsimple vertex {$x_1$ cannot be contained in all of them, since there must exist a facet containing $x_3$ but not $x_1$ (recall \cref{rmk:two-vertices}). But then the nonsimple vertex $x_1$} is only contained in five facets, a contradiction.

By \cref{thm:pentasm},  $F$  is either a pentasm or $\Delta_{2,2}$.
\end{proof}

\begin{lemma}\label{lem:8v} Suppose that $P$ is a  5-polytope with $13$ vertices and 35 edges, and that $F$ is a facet with 8 vertices. Then $F$ is a prism.
\end{lemma}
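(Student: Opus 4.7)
The plan is to follow the template established in \cref{lem:9v,lem:10v}. Since $\xi(P) = 2\cdot 35 - 5\cdot 13 = 5$, \cref{lem:facet-not-all-excess} gives $\xi(F) < 5$; as $F$ is a 4-polytope, its excess is even, whence $\xi(F) \in \{0, 2, 4\}$. The case $\xi(F) = 0$ immediately identifies $F$ as the simplicial prism $\Delta_{1,3}$ via \cref{lem:CountSimpleP}(ii), which is the desired conclusion. The case $\xi(F) = 2$ is eliminated at once by Gr\"unbaum's enumeration $E(8,4) = \{16\}\cup[18,28]$ from \cite[Ch.~10]{Gru03}, since $17$ is not in this set.

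The bulk of the argument is to rule out $\xi(F) = 4$. Assuming this, $F$ would have 18 edges and 17 edges of $P$ would lie outside $F$. I would denote the five vertices outside $F$ by $x_1,\dots,x_5$, let $\beta$ be the number of edges of $P$ between $F$ and these vertices, and set $\alpha = 17-\beta$ for the number of edges among them. From $\alpha\le\binom{5}{2}=10$ and $2\alpha+\beta = \sum_i \deg_P x_i \ge 25$, the only possibilities are $\beta\in\{7,8,9\}$, with outside excess $9-\beta\in\{0,1,2\}$; I plan to handle these three subcases separately.

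For $\beta=9$, every $x_i$ is simple in $P$, exactly one vertex $u\in F$ has two edges outside $F$, and the other seven each have one. I would study the vertex figure of a suitably chosen non-simple vertex of $F$ via \cref{lem:vertex-all-neighbours-simple}, and invoke \cref{lem:CountSimpleP} to show this figure would be a simple 4-polytope with a forbidden number of vertices. For $\beta=8$, $F$ has Shephard's property in $P$, so \cref{thm:shp} forces $P$ to be decomposable (a pyramid over $F$ would have only nine vertices); I would then apply \cref{thm:McMullen-Decomp} to affinely independent cycles inside $F$ and inside the outside subgraph (which is $K_5$ minus one edge) to extract an indecomposable subgraph of the skeleton of $P$ meeting every facet, contradicting decomposability. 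For $\beta=7$, the outside subgraph is $K_5$; if the five outside vertices are affinely independent, they form a 4-simplex facet $G$ of $P$ disjoint from $F$, and counting the second facets through the five ridges of $G$ against the seven edges available between $F$ and $G$ yields a contradiction, the affinely dependent case being handled similarly.

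The hard part will be the detailed case analysis when $\xi(F)=4$, particularly the subcase $\beta=7$: tracking ridges and facet incidences requires careful bookkeeping to rule out borderline configurations such as the hypothetical 4-simplex facet coexisting with $F$ through exactly seven cross-edges.
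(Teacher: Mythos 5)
Your reduction to $\xi(F)\in\{0,2,4\}$, the disposal of $\xi(F)=0$ and $\xi(F)=2$, and your $\beta=9$ endgame all match the paper; but your treatment of $\xi(F)=4$ has two genuine problems. First, the subcase $\beta=7$ rests on a false claim: five affinely independent vertices that are pairwise adjacent in the graph do \emph{not} thereby form a $4$-simplex facet (or even a face) of $P$ -- completeness of the induced graph says nothing about their convex hull being a face -- so the facet/ridge count you build on it has no foundation. The honest fix is to notice that every vertex of a facet has at least one neighbour outside that facet, which gives $\beta\ge 8$ outright and makes $\beta=7$ vacuous; note that you already use this fact implicitly when you assert the $(2,1,\dots,1)$ distribution in the $\beta=9$ case, so omitting it for $\beta=7$ is an inconsistency rather than a shortcut.

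Second, in the subcase $\beta=8$ your plan to contradict decomposability by assembling ``affinely independent cycles inside $F$ and inside the outside subgraph'' is not justified: nothing guarantees that cycles in the skeleton of $F$ or among the five outside points are affinely independent, and gluing the $F$-part to the outside part needs shared vertices or a careful use of \cref{thm:McMullen-Decomp}(v) that your sketch does not supply. The paper's argument closes both $\beta=8$ and $\beta=9$ at once with a tool you never invoke: by \cref{prop:2ddecomp}, a $4$-polytope with $8$ vertices and excess $4$ is not a prism and hence is indecomposable, and the five outside vertices cannot form a facet (a $5$-vertex facet is a simplex needing $10$ internal edges, forcing at least $36$ edges in $P$); so the skeleton of $F$ is an indecomposable geometric graph meeting every facet, and \cref{thm:McMullen-Decomp}(iii)--(iv) make $P$ indecomposable. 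With \cref{thm:shp} this kills Shephard's property, i.e.\ forces $\beta=9$, and then the unique nonsimple vertex (uniqueness via \cref{lem:simpleVertexOutside}, a step you should make explicit before invoking \cref{lem:vertex-all-neighbours-simple} -- your ``suitably chosen'' vertex must be shown to exist) has degree $10$, whose vertex figure would be a simple $4$-polytope with ten vertices, impossible by \cref{lem:CountSimpleP}. I recommend replacing your $\beta=7$ and $\beta=8$ arguments by these observations.
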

\begin{proof}
Clearly the excess of $F$ can only be zero, two  or four. From \cite[\S10.4]{Gru03},  it follows that $F$ cannot have 17 edges, i.e. its excess cannot equal two (see \cite[Thm. 19]{PinUgoYos15} for a higher dimensional version of this).

So suppose that $F$ has excess four, i.e. there are 18 edges in $F$. Then   $F$ is indecomposable by \cref{prop:2ddecomp}, and the five vertices $\{x_1,x_2,x_3,x_4,x_5\}$ outside $F$ cannot form a facet, as $P$ would then have at least 36 edges. Then  every facet of $P$ touches $F$, and so the whole polytope is indecomposable by \cref{thm:McMullen-Decomp}(iii). In particular, $F$ cannot have Shephard's property by \cref{thm:shp}, thus there are nine edges running out of $F$, edges edges joining the $x_i$, each $x_i$ is simple. Since every nonsimple vertex of $P$ which is contained in $F$ must send two edges outside $F$, there would then be a unique nonsimple vertex in $P$ (actually in $F$),  which can only have degree ten in $P$. But then, \cref{lem:vertex-all-neighbours-simple}  gives that the corresponding vertex figure of $P$ would be a simple 4-polytope with ten vertices, which is ruled out by \cref{lem:CountSimpleP}.

The only remaining possibility is that $F$ is simple, i.e. a simplicial 4-prism.
\end{proof}

\begin{theorem}
\label{thm:v13-e35}
There is no 5-polytope with $13$ vertices and 35 edges.
\end{theorem}
\begin{proof} Let $P$ denote a $5$-polytope with 13 vertices and 35 edges. Throughout, $F$ will denote a fixed facet with the maximal number of vertices. We investigate various cases, based on this number.

{\bf Case 1.} The facet $F$ has 11 or 12 vertices.

This case of 12 vertices is clearly impossible. such a facet $F$ would have at least $(12\times 4)/2$ edges, causing $P$ to have at least 36 edges.

Let $F$ denote a facet of $P$ with 11 vertices. The facet cannot have more than 23 edges, since there must be at least twelve edges outside $F$. Thus the facet $F$ has 22 or 23 edges. In either case, $F$ has Kirkman's property; indeed, if another facet $G$ intersected $F$ at a subridge or face of smaller dimension,  then at least fourteen edges would leave $F$.
If  $F$ has 22 edges, it is simple, and by \cref{lem:wksp} every vertex in $F$ is also  simple in $P$. But then $P$ would only have 22+11+1 edges.
Thus $F$ has 23 edges, there must be exactly eleven edges leaving $F$, so $F$ has Shephard's property and $P$ is decomposable.

Now \cref{lem:4-polytope-11v-23e} ensures the existence of a ridge $R$ in $F$ with eight vertices, in which each vertex is simple. Let $G$ denote the other facet containing $R$. It cannot be a pyramid, because its apex would have excess degree four, while $F$ has excess degree two. Thus $G$ has ten vertices.

By \cref{lem:4-polytope-11v-23e}, the three vertices in $F\setminus R=P\setminus G$ form a triangular face $T$. Denote by $u,v$ the two vertices in $G\setminus R$. One of them, say $u$, must be adjacent to two vertices in $T$, which means that the geometric graph with vertices $T\cup\{u\}$ is indecomposable. Now $v$ is adjacent to the third vertex in $T$, and also to $v$, whence the graph with vertices $T\cup\{u,v\}$ is indecomposable. The complement of this graph contains only vertices in $R$, and so the graph touches every facet. This implies that $P$ is indecomposable, which is a contradiction.

{\bf Case 2.} The facet $F$ has 10  vertices.

We know from \cref{lem:10v} that $F$ has 21 edges. It must also have Kirkman's property, otherwise there would be at least 13 edges leaving $F$. From
\S6, we also know that $F$ must be $A_4,B_4,C_4$ or $\Sigma_4$. In each case $F$ has seven 3-faces, so $P$ must have eight facets.

We first consider the possibility that $F$ is either $A_4$ or $B_4$. Then there is a ridge $R$ in $F$ containing eight vertices, all of them simple. We consider the other facet $G$ containing this ridge. It cannot be a pyramid, because its apex would have excess degree four, while $F$ has excess degree two. Nor can it have eleven vertices, by hypothesis. Thus $G$ has ten vertices, and there is a unique vertex outside $F\cup G$. This vertex cannot be connected to any vertex in $R$, because they are all simple and all have degree five in the graph $F\cup G$. Thus removal of the four vertices in $(F\cup G)\setminus R$ disconnects the graph of $P$, contradicting Balinski's Theorem.

Now suppose $F$ is either $C_4$ or $\Sigma_4$. Then $F$ contains a ridge $R_1$ which is $\Sigma_3$; we denote the other facet containing $R_1$ by $G$, and consider several possible cases. First note that $G$ cannot contain eight vertices, because then it would be a pyramid, violating \cref{lem:8v}. By \cref{lem:9v}, it cannot contain nine vertices, because neither $\Delta_{2,2}$ nor a 4-pentasm can contain $\Sigma_3$. By assumption, $G$ does not contain eleven or more vertices.

So $G$ also has ten vertices, and must be either $C_4$ or $\Sigma_4$. We show that it cannot be $\Sigma_4$. Denote by $x,y,z$ the three vertices of $G$ not contained in $R_1$; if $G$ is $\Sigma_4$, there will only be two edges between them. Suppose that $y$ and $z$ are not adjacent. Renaming the vertices if necessary, there are just two possibilities: either the vertex $x$ or the vertex $y$ has degree six in $P$. In the first case, there will be at least six facets containing $x$, since $x$ is nonsimple. And there must be two further facets in $P$, one containing $y$ but not $x$ or $z$, and another one containing $z$ but not $x$ or $y$. Including $F$, this means that $P$ has nine facets altogether. Analogously, in the second case, there are at least six facets containing $y$, since $y$ is nonsimple. And apart from $F$, there must be two further facets in $P$, one containing $x$ and $z$ but not $y$, and another one containing $z$ but not $x$ or $y$. Again, we have the contradictory conclusion that $P$ has nine facets.

So every facet with 10 vertices is $C_4$, and the three vertices outside are connected by three edges.

For any copy of $C_4$, its vertices can partitioned  into three subsets $Q, T_1, T_2$, uniquely up to renaming $T_1, T_2$, such that $Q$ constitutes a quadrilateral face, each  $T_i$ is a triangular face, the convex hull of $T_1\cup T_2$ is a 3-prism, and for each $i$ and the convex hull of $Q\cup T_i$ is a 3-face of $F$ equivalent to $\Sigma_3$. In terms of \cref{fig:10v-21e}(c), we can take $Q=\{a_1,a_2,b_1b_2\}$, $T_1=\{u,a_3,a_4\}$ and $T_1=\{v,a_5,a_6\}$.

Now fix   $Q, T_1, T_2$ as the corresponding sets in $F$, $R_1$ being the convex hull of $Q\cup T_1$.
Now $G$ must also be a copy of $C_4$, with the corresponding partition being  $Q, T_1, T_3$, where $T_3$ comprises the three vertices outside $F$. Considering other facets containing ridges, we see that convex hull of  $Q, T_2$ and $T_3$ will also be facet of the form $C_4$. Then any edge of $P$ will be contained in one of the ridges determined by $Q\cup T_i$ for some $i$, or by $T_i\cup T_j$ for distinct $i,j$. Adding these up, we find that $P$ has only 34 edges, contrary to hypothesis.

{\bf Case 3.} The facet $F$ has nine vertices. By \cref{lem:9v}, $F$ is either a pentasm or $\Delta_{2,2}$.

{\bf Subcase 3.1} The facet $F$  is $\Delta_{2,2}$.

Then $F$ has six 3-faces, all of them being simplicial 3-prisms. See ~\cref{fig:polytopes}(c), where the vertices of $F$ are labelled  so that two of them are adjacent if, and only if, they share a number or a letter.
Then the facets of $F$ (which are ridges in $P$) are the convex hulls of the six subsets which either omit one number or omit one letter. Accordingly we  denote them as $R_{12}$, $R_{13}$, $R_{23}$, $R_{ab}$, $R_{ac}$ and $R_{bc}$.
Thus the 3-faces in $\Delta_{2,2}$ are partitioned into two groups of three; any two 3-faces within the same group intersect in a triangle, and any two 3-faces from different groups intersect in a quadrilateral.

There are 17 edges outside $F$; denote the vertices outside by $x_1$, $x_2$, $x_3$ and $x_4$. Any subset of three of these vertices will belong to at least 12 edges; hence any one of them can be adjacent to at most five vertices in $F$. It follows that, for any 3-face $R$ in $F$, the corresponding other facet is never a pyramid over $R$. This in turns implies that each $x_i$ is adjacent to at least two of the others (if there were say only one edge from $x_1$ to the other $x_i$, the only facet containing $x_1$ but not this edge would be a pyramid with apex $x_1$ and base in $F$).

For any ridge $R$ in $F$, the corresponding other facet must have either eight or nine vertices, and so must be either a 4-prism (by \cref{lem:8v}) or a copy of $\Delta_{2,2}$. So if $T$ is any triangular face in $F$, any one of the external vertices $x_i$ must be adjacent to either none, one (in the case of $\Delta_{2,2}$) or three (in the case of a 4-prism) of the vertices in $T$; two is not possible.

Suppose that the other facet in $P$ containing $R_{12}$ is a prism, and that the other facet containing $R_{13}$ is another $\Delta_{2,2}$. We may suppose that $x_i$ is adjacent to $a_i$, $b_i$ and $c_i$ for $i=1,2$. Then $x_2$ clearly does not belong to the other facet containing $R_{13}$, so the other three vertices in this facet must $x_1,x_3,x_4$. But  $x_1$  is adjacent to all vertices in one of the triangles in $R_{13}$, which makes it impossible for the other facet containing $R_{13}$ to be a copy of $\Delta_{2,2}$.

It follows that the other facets corresponding to  $R_{12}$, $R_{13}$ and $R_{23}$ are either all prisms, or all $\Delta_{2,2}$. Likewise for $R_{ab}$, $R_{ac}$ and $R_{bc}$. This can only be achieved (after relabelling the $x_i$ if necessary) if it holds that for $i=1,2,3$, $x_i$ is adjacent to $a_i$, $b_i$ and $c_i$, but not to  $a_j$, $b_j$ or $c_j$ for $j\ne i$, and that  $x_1$, $x_2$, $x_3$ are mutually adjacent. Three of these other facets are 4-prisms, and three of them are copies of  $\Delta_{2,2}$. Furthermore, none of them contain $x_4$: this will lead to an absurdity.

Suppose $G_1$ is any facet containing $x_4$. Clearly  $G_1$  intersects $F$ at a non-empty face of dimension $<3$. Every vertex in $F$ is adjacent to precisely one of $x_1$, $x_2$, $x_3$; this rules out the possibility that $F\cap G_1$ is a single vertex or an edge. Were   $F\cap G_1$ a quadrilateral, each of its vertices would be adjacent to two vertices outside $F$, and there would be at least 13 edges running out from $F$. But then $x_4$ could only be adjacent to one of the other $x_i$, contrary to our previous conclusion. Thus  $F\cap G_1=T_1$ must be a triangle.

Now let $G_2$ be another facet containing $x_4$, but not containing $T_1$. Then  $F\cap G_2=T_2$ is another triangle, each of whose vertices is also adjacent to two vertices outside $F$. Thus there would be at least 10 edges running out from $F$ from $T_1\cup T_2$, and another 4 from the remaining vertices. But then $x_4$ could not be adjacent to any of the other $x_i$.

{\bf Subcase 3.2} The facet $F$ is a $4$-pentasm.

Then $F$ has a 3-face $R$ of $F$ which is a 3-pentasm (see \cref{rmk:Pentasm-Facets}), in particular, $R$ has seven vertices and eleven edges. Denote by $G$ the other facet containing $R$. We know that $G$ cannot have ten or more vertices. If it has 9 vertices, and it cannot be $\Delta_{2,2}$ because it contains a pentagonal face, and so has at least 19 edges. But then $F\cup G$ contains at least 27 edges, while the two vertices outside $F\cup G$ must contribute at least nine more edges, which is too many. Finally, consider the possibility that $G$ contains eight vertices. Then $G$ is a pyramid over $R$, $F\cup G$ contains  26 edges, and the three vertices outside $F\cup G$ must contribute at least 12 more edges, which is again too many.

{\bf Case 4.} The facet $F$ has eight vertices.

Denote the vertices outside $F$ by $x_1,x_2,x_3,x_4,x_5$. By \cref{lem:8v}, $F$ itself is a 4-prism.

Observe that every two simplicial 3-prisms in $F$ intersect in a quadrilateral 2-face. Consider one of the simplicial 3-prisms $R_1$ in $F$. If the other facet $F_1$ containing $R_1$ were a pyramid, then $F_1\cup F$ would contain 22 edges and there would be four vertices outside $F_1\cup F$ incident to 13 edges; this is impossible since four vertices are incident to at least 14 edges. So $F_1$ (and every facet containing of  one of the simplicial 3-prisms in $F$) must also be a simplicial 4-prism.  Let $x_1$ and $x_2$ denote the vertices in $F_1\setminus R_1$. Consequently, there are 12 edges outside $F_1\cup F$ incident to the remaining three vertices $x_3$, $x_4$ and $x_5$. So every pair in $\{x_3,x_4,x_5\}$ is adjacent, each vertex in $\{x_3,x_4,x_5\}$ is simple, and at least one vertex in $\{x_3,x_4,x_5\}$ is adjacent to a vertex in $\{x_1,x_2\}$. This in turn implies that every vertex  $\{x_1, x_2,x_3,x_4,x_5\}$ sends at most three edges into $F$. The facet $F$ has four different simplicial 3-prisms, say $R_1$, $R_2$, $R_3$ and $R_4$, and correspondingly, four different  simplicial 4-prisms $F_1$, $F_2$, $F_3$ and $F_4$ containing these 3-faces. If a vertex in  $\{x_1, x_2,x_3,x_4,x_5\}$ were contained in more than one of the facets $F_1$, $F_2$, $F_3$ and $F_4$, it would send at least four edges into $F$, a contradiction. Consequently, the pairs of vertices in  $F_i\setminus R_i$, for $i=1,2,3,4$, must be pairwise disjoint, which is clearly a contradiction.

{\bf Case 5.} The facet $F$ has seven vertices.

The minimum number of edges of $F$ is 15 by \cref{thm:triplexes}, in which case $F=M_{3,1}$, that is, a pyramid over a simplicial 3-prism $R$. Suppose $F$ has 15 edges and consider the other facet $F_1$ containing $R$ . Then   $F_1=M_{3,1}$, and $F\cup F_1$ contains 21 edges. Hence there are 14 edges outside  $F\cup F_1$ which are incident to five vertices of degree at least 5, which is impossible.  Thus $F$ has at least 16 edges.

Suppose $F$ has exactly 16 edges. If $F$ has a pentagonal face, it is obviously a 2-fold pyramid. Otherwise, denoting by $q$ the number of quadrilateral faces in $F$, Kalai's generalisation of the Lower Bound Theorem tells us that $16+q\ge4\times7-10$, i.e. $F$ has two quadrilateral faces. They cannot be disjoint, as $F$ has only seven vertices. They cannot intersect at a vertex; otherwise all the vertices of $F$ would be contained in a 3-dimensional affine subspace. So they must intersect at an edge, and then their union contain six vertices, and is contained in a 3-dimensional affine subspace. Since there is only one vertex outside this subspace,  $F$ is again a pyramid over a 3-polytope $R$ with six vertices and ten edges (in fact, $R$ is $\TA$). In both cases, the other facet $F_1$ containing $R$ is also a pyramid, and thus,
and $F\cup F_1$ contains 22 edges. Hence there are 13 edges outside  $F\cup F_1$ which are incident to five vertices of degree at least five, which is again impossible.

So the facet $F$ has at least 17 edges. Denote  by $e_b$ the number of edges between the the six vertices outside $F$ and the facet $F$, and by $e_a$ the number of edges among the six vertices outside $F$. Then  $e_b\ge 7$. On the other hand, $e_a+e_b\le 18$ and $2e_a+e_b\ge 6\times 5$, implying $e_a\ge 12$ and $e_b\le 6$, a contradiction.

{\bf Case 6.}  Every facet, including $F$, has six or five vertices.

It is well known  (see e.g. \cref{thm:triplexes}) that a 4-polytope $F$ with six vertices has at least 13 edges, and can have exactly 13 edges only if $F=M_{2,2}$, i.e. a two-fold pyramid over a quadrilateral. We will show first that this  case does not arise. Suppose $F$ is such a facet, and consider the other facet $F_1$ containing the pyramid $R$ over a quadrilateral. Then   $F_1=M_{2,2}$, and $F\cup F_1$ contains 18 edges.  Hence there are 17 edges outside  $F\cup F_1$ which are incident to six vertices of degree at least five. Let $S$ denote the set of the six vertices outside $F\cup F_1$. Denote  by $e_b$ the number of edges between $S$ and $F\cup F_1$, and by $e_a$ the number of edges among the six vertices in $S$. Then $e_a+e_b= 17$ and $2e_a+e_b\ge 6\times 5$, implying $e_a\ge 13$ and $e_b\le 4$. However, having $e_b\le 4$ contradicts Balinski's Theorem, since removing at most four vertices  in $S$, those incident to the edges counted in $e_b$, disconnects the polytope graph.

Thus any facet $F$ with six vertices has at least 14 edges.  Since ${6\choose2}=15$, there is at most one pair of vertices in $F$ which is not joined by an edge. Thus no 2-face of $F$ can be a quadrilateral, pentagon etc, so $F$ is 2-simplicial, i.e. every 2-face  is a simplex.

It is clear that any facet with five vertices is  a simplex, and hence also  2-simplicial.

This all implies that $P$ is 2-simplicial, and therefore the lower bound theorem for 2-simplicial polytopes \cite[Thm.~1.4]{Kal87} gives the final contradiction as it ensures that $P$ would have at least 50 edges.
\end{proof}

\end{document}